\documentclass[a4paper,11pt]{article}

\usepackage[small,bf,hang]{caption} 

\usepackage{todonotes}
\usepackage[ruled,linesnumbered]{algorithm2e}
\usepackage{amsmath,amsthm,amssymb}
\usepackage{graphicx,subcaption,tikz}
\usepackage{comment,xspace,paralist}							
\usepackage[shortlabels]{enumitem}	
\usepackage{hyperref}
\usepackage{changepage}
\hypersetup{
    colorlinks=true,
    citecolor = blue,
    linkcolor=blue,
    filecolor=magenta,
    urlcolor=cyan,
}
\usepackage[margin=1in]{geometry}
\usepackage{verbatim}
\usepackage[capitalize]{cleveref}
\usepackage{color}
\usepackage{authblk}

\usetikzlibrary{decorations.pathreplacing, calligraphy}

\newtheorem{theorem}{Theorem}[section]
\newtheorem{lemma}[theorem]{Lemma}

\newtheorem{claim}{Claim}[theorem]
\Crefname{claim}{Claim}{Claims}
\newtheorem{corollary}[theorem]{Corollary}
\Crefname{subsection}{Subsection}{Subsections}
\newtheorem{conjecture}{Conjecture}
\Crefname{conjecture}{Conjecture}{Conjectures}

\expandafter\let\expandafter\oldproof\csname\string\proof\endcsname
\let\oldendproof\endproof
\renewenvironment{proof}[1][\proofname]{%
	\oldproof[\normalfont\bfseries #1]%
}{\oldendproof}
\newenvironment{subproof}[1][\normalfont\it Subproof]{%
	\begin{proof}[#1]%
	}{%
	\end{proof}%
}

\title{Erd\H{o}s-Hajnal conjecture beyond five-vertex graphs}

	\author[1]{Shenwei Huang}
    
    \author[2]{Yiao Ju}

    \author[2]{Yidong Zhou}
	
	\affil[1] {School of Mathematical Sciences and LPMC, Nankai University, Tianjin 300071, P.R.~China. Email: {\tt shenweihuang@nankai.edu.cn}.}
  
    \affil[2] {College of Computer Science, Nankai University, Tianjin 300350, China.}

\begin{document}

\maketitle

\begin{abstract}
    In 1989, Erd\H{o}s and Hajnal conjectured that for any graph $H$, there is a constant $c=c(H)>0$ such that every $n$-vertex graph $G$ with no induced copies of $H$ contains a clique or an independent set of size at least $n^{c}$. This conjecture, known as the Erd\H{o}s-Hajnal conjecture, is a central open problem in combinatorics and listed as one of the top 10  Erd\H{o}s problems by Bloom on the Erd\H{o}s problem website https://www.erdosproblems.com/.
    In a recent breakthrough, Nguyen, Scott and Seymour proved that Erd\H{o}s-Hajnal conjecture holds for the case when $H$ is the five-vertex path, which, combined with known results, implies that Erd\H{o}s-Hajnal conjecture holds for every five-vertex graph.

    In this paper, we extend the iterative sparsification framework recently developed by Nguyen, Scott and Seymour. We introduce a generalized niceness condition relaxing their nice condition, a novel intermediate property concerning combs and a general structural lemma (which may be of independent interest)
    that is sufficient to deduce the Erd\H{o}s-Hajnal conjecture.

    This framework simultaneously recovers the recent result on the five-vertex path (PLMS 2026) and the classical result on the bull graph by Chudnovsky and Safra (JCTB 2008) as special cases, thereby unifying these two previously independent strands, and further proves the conjecture for two new cases: the E-graph (which contains the five-vertex path) and the Bird graph (which contains both the five-vertex path and the bull). These are the first two six-vertex graphs whose validity does not follow from the known operations (see Alon-Pach-Solymosi, Combinatorica 2001, and Nguyen-Scott-Seymour, TAMS 2026) that preserve the  Erd\H{o}s-Hajnal property.
\end{abstract}

\section{Introduction}

A {\em clique} ({\em stable set}) is a vertex set whose vertices are pairwise adjacent (nonadjacent, resp.).
Ramsey~\cite{Ra30} proved that for every integer $k\geq 1$, there exists a positive integer $R(k,k)$ such that every graph on at least $R(k,k)$ vertices contains a clique or stable set of size $k$. This shows that a sufficiently large graph must contain a large clique or stable set. Erd\H{o}s \cite{Er47} proved via probabilistic method that for large $n$, there exists an $n$-vertex graph with no cliques or stable sets of size at least $2\log n$. This shows that general graphs do not nacessarily contain a clique or stable set of polynomial size. However, $H$-free graphs might behave differently.

Let $G,H$ be graphs. We say that $G$ is {\em $H$-free} if $G$ has no induced subgraph that is isomorphic to $H$. 
In 1989, Erd\H{o}s and Hajnal \cite{EH89} showed that 
for any graph $H$, there exists a constant $c=c(H)>0$ such that 
any $n$-vertex $H$-free graph has a clique or stable set of size at least $e^{c\sqrt{\log n}}$.
Erd\H{o}s and Hajnal~\cite{EH89} furhter conjectured that one can improve the bound to a polynomial of $n$, which has become known as the Erd\H{o}s-Hajnal conjecture.

\begin{conjecture}\label{conj:E-H}
\textnormal{(Erd\H{o}s-Hajnal \cite{EH89})} For every graph $H$, there exists $c=c(H)>0$ such that every $n$-vertex $H$-free graph has a clique or stable set of size at least $n^c$.
\end{conjecture}

In 2024, Buci\'c, Nguyen, Scott and Seymour \cite{BNSS24} gave an $\log \log$ improvement over the classical Erd\H{o}s-Hajnal bound: for any graph $H$, there exists a constant $c=c(H)>0$ such that
any $n$-vertex $H$-free graph has a clique or stable set of size at least $e^{c\sqrt{\log n \log \log n}}$.
For a set $\mathcal{H}$ of graphs, $G$ is {\em $\mathcal{H}$-free} if $G$ is $H$-free for all $H\in\mathcal{H}$.
The Erd\H{o}s-Hajnal conjecture is equivalent to the statement that for any nonempty set $\mathcal{H}$ of graphs, there exists a constant $c=c(\mathcal{H})>0$ such that any $n$-vertex $\mathcal{H}$-free graph has a clique or stable set of size $n^c$.
If a nonempty set $\mathcal{H}$ of graphs satisfies this statement, we say that $\mathcal{H}$ has the {\em Erd\H{o}s-Hajnal property}. In this case, We call $c(\mathcal{H})$ a {\em Erd\H{o}s-Hajnal constant} for $\mathcal{H}$.
It is clear that a graph $H$ has the Erd\H{o}s-Hajnal property if and only if $\overline{H}$ has the Erd\H{o}s-Hajnal property, where $\overline{H}$ is the complement of $H$. 

\subsection{Equivalent formulations}

The R\"odl theorem says that every graph in a proper hereditary graph class contains an induced subgraph with linear size that is very sparse or very dense. Formally, we say that $X$ is {\em $c$-sparse} if every vertex in $X$ has at most $c|X|$ neighbors in $X$, and $X$ is {\em $c$-restricted} if $X$ or $\overline{X}$ is $c$-sparse. 

\begin{theorem}\label{thm:Rodl}
\textnormal{(R\"odl~\cite{Ro86})} For every graph $H$ and every $\epsilon\in(0,\frac{1}{2})$, there exists $\delta>0$ such that every $H$-free graph $G$ has an $\epsilon$-restricted induced subgraph of size at least $\delta|G|$.
\end{theorem}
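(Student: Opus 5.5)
We prove R\"odl's theorem by the standard route: use the graph removal lemma to reduce to a counting statement, then obtain the restricted set by random sampling. Let $h=|H|$, and fix $\epsilon\in(0,\frac12)$ and an $H$-free graph $G$ on $n$ vertices. We may assume $n$ is large compared with $h$ and $\epsilon$; otherwise a single vertex suffices once $\delta\le 1/n_0$. Apply Szemer\'edi's regularity lemma with a parameter $\eta\ll\epsilon$, $\eta$ depending on $h$ and $\epsilon$. This gives an equipartition $V_1,\dots,V_k$ with $k\le K(\eta)$ in which all but $\eta k^2$ pairs are $\eta$-regular.

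\textbf{Colouring the reduced graph.} Form the reduced structure on $[k]$ and colour each $\eta$-regular pair $(i,j)$ by its density $d_{ij}=d(V_i,V_j)$: \emph{sparse} if $d_{ij}<\epsilon/4$, \emph{dense} if $d_{ij}>1-\epsilon/4$, and \emph{mixed} otherwise. Suppose some $h$ clusters $V_{i_1},\dots,V_{i_h}$ are pairwise regular and every pair among them is mixed. The counting (embedding) lemma, applied to $G$ restricted to the edges of $H$ and to $\overline G$ restricted to the non-edges of $H$, then embeds $H$ as an induced subgraph with one vertex in each of these clusters. This is possible because each mixed pair has density between $\epsilon/4$ and $1-\epsilon/4$ in both $G$ and $\overline G$, and $\eta\ll\epsilon$. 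This contradicts $H$-freeness. Hence the graph of mixed regular pairs on $[k]$ has no $K_h$.

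\textbf{Finding a homogeneous set of clusters.} Since the mixed graph is $K_h$-free, Ramsey's theorem says that among any $R(h,m)$ clusters there are $m$ clusters pairwise not joined by a mixed regular pair. Within these we must also avoid irregular pairs. Turán's theorem handles this: the $\eta k^2$ irregular pairs leave an independent set of size about $1/(2\eta)$. Apply Ramsey there, then Ramsey again with two colours, sparse and dense. This yields a set of $t=t(\epsilon)$ clusters in which every pair is regular and all pairs are sparse, or all pairs are dense; $t$ is chosen large, about $4/\epsilon$. Passing to complements if needed, assume all pairs are sparse.

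\textbf{Extracting the restricted set.} Let $X=V_{i_1}\cup\dots\cup V_{i_t}$, so $|X|\ge tn/(2k)$. Regularity of each pair means that all but $\eta|V_i|$ vertices of $V_i$ have at most $(\epsilon/4+\eta)|V_j|$ neighbours in $V_j$. Delete from each cluster the at most $t\eta|V_i|$ vertices that are atypical towards some other chosen cluster. A surviving vertex $v$ then has at most $(\epsilon/4+\eta)|X|$ neighbours in the other clusters, plus at most $|V_i|\le|X|/t\le\epsilon|X|/4$ neighbours inside its own cluster. The total is below $\epsilon|X'|$, because $|X'|\ge(1-t\eta)|X|$. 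Thus $X'$ is $\epsilon$-sparse, with $|X'|\ge\delta n$ for $\delta=t/(4K(\eta))$.

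\textbf{Main obstacle.} The delicate step is the induced counting lemma. One must check that the parameter chain $\epsilon\to t\to\eta\to K$ is consistent: $\eta$ must be small relative both to $\epsilon^{h^2}$, for counting, and to $1/t$ together with the Ramsey and Turán thresholds. It is fixed by choosing $t$ first and then $\eta$ small enough for all requirements. The regularity bound makes $\delta$ tower-type, but the theorem only needs $\delta>0$.
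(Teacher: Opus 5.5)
Your argument is correct, and it is essentially R\"odl's original regularity-lemma proof; the paper does not prove this theorem itself but quotes it, remarking only that the regularity approach gives a tower-type $\delta$. Two small clean-ups: your opening sentence mentions the removal lemma and random sampling, neither of which your argument uses, and the Tur\'an step requires applying the regularity lemma with a lower bound $k\ge 1/\eta$ on the number of parts, so that the set of pairwise regular clusters has size at least $k/(2\eta k+1)\ge 1/(3\eta)\ge R(h,R(t,t))$.
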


The original proof of R\"odl uses the regularity lemma which gave
a tower-type bound on $\delta$.
Fox and Sudakov \cite{FS08} made a significant
improvement showing that one can take $\delta=e^{-c|H|(\log \frac{1}{\epsilon})^{2}}$ for some constant $c>0$, and they conjectured that one can take $\delta$ to be a polynomial in $\epsilon$. 
A set $\mathcal{H}$ of graphs (or the class of $\mathcal{H}$-free graphs) has the {\em polynomial R\"odl property}, if there exists $d\geq 1$ such that for every $\epsilon\in(0,\frac{1}{2})$, every $\mathcal{H}$-free graph $G$ has an $\epsilon$-restricted induced subgraph of size at least $\epsilon^d|G|$. By taking $\epsilon$ to be a small polynomial of $|G|^{-1}$, one may see that the polynomial R\"odl property implies the Erd\H{o}s-Hajnal property.

For two graphs $G$ and $H$, an induced {\em copy} of $H$ in $G$, is an injection $\phi:V(H)\rightarrow V(G)$ such that for all $u,v\in V(H)$, $uv\in E(H)$ if and only if $\phi(u)\phi(v)\in E(G)$. The number of induced copies of $H$ in $G$ is denoted by $\mathrm{ind}_H(G)$.
Nikiforov generalized Theorem \ref{thm:Rodl} to the graphs that contain few induced copies of $H$.

\begin{theorem}\label{thm:Nikiforov}
\textnormal{(Nikiforov~\cite{Ni06})} 
For every graph $H$ and every $\epsilon\in(0,\frac{1}{2})$, there exists $\delta>0$ such that every graph $G$ with $\mathrm{ind}_H(G)<(\delta|G|)^{|H|}$ has an $\epsilon$-restricted induced subgraph of size at least $\delta|G|$.
\end{theorem}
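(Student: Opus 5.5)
The plan is to follow R\"odl's original route via Szemer\'edi's regularity lemma, using the counting lemma in place of exact $H$-freeness. Fix $H$ with $h=|H|$ vertices and $\epsilon\in(0,\frac12)$. First apply Ramsey's theorem to find $m=R(h,h)$, or more conveniently a larger $m$ chosen so that every two-coloring of the pairs of an $m$-set contains a monochromatic $K_h$. Next choose a small regularity parameter $\eta=\eta(\epsilon,h)$ and apply the regularity lemma with lower bound $m/\eta$ on the number of parts. This yields an equipartition $V_1,\dots,V_k$ with $k\le M(\eta)$ in which all but $\eta k^2$ pairs are $\eta$-regular. The eventual $\delta$ will be of order $\epsilon/M(\eta)$ times constants, and small enough that $(\delta|G|)^h$ is below the count produced by the counting lemma.

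The argument then has two steps.

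\textbf{Step 1 (Tur\'an cleaning).} Build the auxiliary graph on $[k]$ whose edges are the $\eta$-regular pairs. Since all but $\eta k^2$ pairs are regular, Tur\'an's theorem gives a clique of size $m$ among the parts when $\eta\ll 1/m$. Color each regular pair ``dense'' if its density is at least $\epsilon/4$ and ``sparse'' otherwise. Ramsey then yields $h$ parts $V_{i_1},\dots,V_{i_h}$ that are pairwise regular and either all dense or all sparse. We will in fact use this only after a case split.

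\textbf{Step 2 (case split).} For a single part $V_i$, either it already contains a large $\epsilon$-restricted set, or we will embed $H$ many times.

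\emph{Simpler formulation.} It is easier to take $m$ large and argue via regular pairs with densities in $[\epsilon/4,1-\epsilon/4]$; call these ``intermediate''. If among the $m$ pairwise regular parts there are $h$ whose pairwise densities are all intermediate, the counting lemma gives at least $\tfrac12(\epsilon/4)^{\binom h2}(|G|/k)^h$ induced copies of $H$. Here edges of $H$ use density $\ge\epsilon/4$ and non-edges use co-density $\ge\epsilon/4$. This count contradicts the hypothesis for $\delta$ small.

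Otherwise, color each pair among the $m$ parts ``sparse'' (density $<\epsilon/4$), ``dense'' (density $>1-\epsilon/4$), or ``intermediate''. Multicolor Ramsey with $m\ge R(3)$ then yields two parts $A=V_i$, $B=V_j$ forming a regular sparse pair (or, symmetrically, a dense pair). Regularity with $\eta<\epsilon/4$ lets us delete the few vertices of $A$ with more than $\epsilon/2\cdot|B|$ neighbours in $B$, and vice versa. What remains is a bipartite-sparse structure, but the parts themselves need not be sparse inside. To handle this, apply the whole argument inside a single part, iterating at most a bounded number of times. Alternatively, and better, include the intra-part graphs in the Ramsey step: take $m$ huge and choose $\ell=\lceil 4/\epsilon\rceil$ parts whose pairs are all sparse. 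Their union $X$ has size $\ell|G|/k$. Each vertex has at most $(\epsilon/2)|X|$ neighbours in the other parts, after deleting exceptional vertices, and at most $|G|/k\le (\epsilon/4)|X|$ neighbours inside its own part. So $X$ is $\epsilon$-sparse, and the dense case gives the complement.

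The main obstacle is bookkeeping the exceptional vertices. After deletions, $X$ shrinks and each vertex's degree must still be at most $\epsilon|X'|$. I would delete, in each of the $\ell$ parts, the at most $\ell\eta|V_i|$ vertices with atypically large degree into some other part, which is guaranteed by regularity. With $\eta\ll\epsilon/\ell$ this keeps $|X'|\ge|X|/2$, and the degree bounds survive with room to spare. This yields $\delta$ depending only on $H$ and $\epsilon$.
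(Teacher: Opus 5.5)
The paper gives no proof of this statement; it only cites Nikiforov. Your argument is the standard route behind it: R\"odl's regularity-lemma proof, with the induced counting lemma replacing $H$-freeness. This is also where the tower-type $\delta$ comes from, and the version you finally settle on is correct.

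Take $\ell=\lceil 4/\epsilon\rceil$ and use Tur\'an to get $m\ge R(\ell,h,\ell)$ pairwise $\eta$-regular parts. Three-colour the pairs as sparse, intermediate or dense.
\begin{itemize}
\item An all-intermediate $h$-set gives at least $\bigl((\epsilon/4)^{\binom h2}-\binom h2\eta\bigr)|V_1|^h$ induced copies of $H$. This is excluded once $\delta\le(\epsilon/4)^{(h-1)/2}/(4M)$.
\item An all-sparse (or all-dense) $\ell$-set needs one cleaning step: from each part remove the fewer than $\ell\eta|V_i|$ vertices of atypically high degree into another chosen part. What remains is an $\epsilon$-sparse (or co-$\epsilon$-sparse) set of linear size.
\end{itemize}

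Two small fixes are needed.
\begin{itemize}
\item With the degree bounds you wrote, a vertex can have up to $\tfrac34\epsilon|X|$ neighbours, so the estimate $|X'|\ge|X|/2$ is not enough. You need $|X'|\ge(1-\ell\eta)|X|\ge\tfrac34|X|$, which your choice $\eta\ll\epsilon/\ell$ does give.
\item Graphs too small for the regularity lemma must be handled separately, by taking $\delta<1/M$.
\end{itemize}

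The two-colour Step 1 and the ``iterate inside a single part'' detour are unnecessary and can be dropped.
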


A finite set $\mathcal{H}$  of graphs (or the class of $\mathcal{H}$-free graphs) is {\em viral}, if there exists $d\geq 1$ such that every graph $G$ with $\mathrm{ind}_{H}(G)<(\epsilon^d|G|)^{|H|}$ for all $H\in\mathcal{H}$ has an $\epsilon$-restricted induced subgraph of size at least $\epsilon^d|G|$. Buci\'c, Fox and Pham~\cite{BFP24} proved the equivalence of the Erd\H{o}s-Hajnal property, the polynomial R\"odl property, and the viral property.

\begin{theorem}\label{thm:E-H-poly-Rodl-viral-equivalent}
\textnormal{(Buci\'c-Fox-Pham~\cite{BFP24})} 
For a finite set $\mathcal{H}$ of graphs, $\mathcal{H}$ has the Erd\H{o}s-Hajnal property if and only if it has the polynomial R\"odl property if and only if it is viral.
\end{theorem}

\subsection{Erd\H{o}s-Hajnal for a single graph}

A vertex set $S\subseteq V(G)$ is {\em homogeneous} in $G$ if all vertices in $S$ have the same neighborhood in $V(G)\setminus S$. A homogeneous set $S$ of $G$ is {\em nontrivial} if $2\leq |S|\leq |G|-1$. A graph $G$ is {\em prime} if $G$ has no nontrivial homogeneous set.

Let $G_1$ and $G_2$ be graphs with disjoint vertex sets, and $a\in V(G_1)$. We say that $G$ is obtained from $G_1$ by {\em substituting} $G_2$ for $a$ if:

$\bullet$ $V(G)=(V(G_1)\setminus \{a\})\cup V(G_2)$,

$\bullet$ $u,v\in V(G_1)\setminus \{a\}$ are adjacent in $G$ if and only if $u,v$ are adjacent in $G_1$,

$\bullet$ $u,v\in V(G_2)$ are adjacent in $G$ if and only if $u,v$ are adjacent in $G_2$,

$\bullet$ $u\in V(G_1)\setminus \{a\}$ and $v\in V(G_2)$ are adjacent in $G$ if and only if $u$ is adjacent to $a$ in $G_1$.

It is clear that a graph $G$ on at least two vertices is prime if and only if there do not exist graphs $G_1,G_2$ with $|G_1|,|G_2|\geq 2$ such that $G$ is obtained from $G_1$ by substituting $G_2$ for a vertex in $G_1$.

One important result due to Alon, Pach and Solymosi \cite{APS01} says that it suffices to consider Conjecture \ref{conj:E-H} for prime graphs.

\begin{theorem}\label{thm:E-H-substitute}
\textnormal{(Alon-Pach-Solymosi \cite{APS01})} Let $H_1,H_2$ be graphs, and $H$ be the graph obtained from $H_1$ by substituting $H_2$ for a vertex $v$ in $H_1$. If both $H_1,H_2$ have the Erd\H{o}s-Hajnal property, then $H$ also has the Erd\H{o}s-Hajnal property.
\end{theorem}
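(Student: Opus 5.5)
The plan is to prove the Alon--Pach--Solymosi substitution theorem (Theorem~\ref{thm:E-H-substitute}) directly from the Erd\H{o}s-Hajnal property of $H_1$ and $H_2$. The engine is a counting argument: a graph with no large clique or stable set contains many copies of $H_1$, and then many of these copies must "blow up" into a copy of $H$. Concretely, let $c_1,c_2$ be Erd\H{o}s-Hajnal constants for $H_1,H_2$, let $h=|H_1|$, and let $G$ be an $n$-vertex $H$-free graph. Suppose, for a small $c$ to be chosen, that $G$ has no clique or stable set of size $n^{c}$; we aim for a contradiction.

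\textbf{Step 1 (many copies of $H_1$).} Set $m=n^{c/c_1}$. Every $m$-vertex induced subgraph of $G$ has no clique or stable set of size $m^{c_1}=n^c$, so it contains an induced copy of $H_1$. Double counting over $m$-subsets then gives at least $\binom{n}{m}/\binom{n-h}{m-h}\ge (n/m)^h$ induced copies of $H_1$, i.e.\ at least $n^{h(1-c/c_1)}$. This is a positive fraction, up to a factor $n^{-hc/c_1}$, of all $h$-tuples.

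\textbf{Step 2 (pigeonhole on the substituted vertex).} Write $v$ for the vertex being substituted. Fix the images of $V(H_1)\setminus\{v\}$ as an $(h-1)$-tuple $T$, and let $W_T$ be the set of vertices $w$ for which $T\cup\{w\}$ induces $H_1$ with $w$ playing the role of $v$. There are at most $n^{h-1}$ tuples, so averaging gives some $T$ with
\[
|W_T|\ \ge\ n^{h(1-c/c_1)}/n^{h-1}=n^{1-hc/c_1}.
\]
All vertices of $W_T$ have the same adjacency pattern to $T$, namely that of $v$ in $H_1$. Hence any induced copy of $H_2$ inside $W_T$, together with $T$, forms an induced copy of $H$, so $G[W_T]$ is $H_2$-free. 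Since $G$ is $H$-free, $G[W_T]$ must be $H_2$-free, and the Erd\H{o}s-Hajnal property of $H_2$ gives a clique or stable set in $W_T$ of size at least $|W_T|^{c_2}\ge n^{c_2(1-hc/c_1)}$.

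\textbf{Step 3 (choosing $c$).} We need $c_2(1-hc/c_1)\ge c$. This holds, for instance, with $c=\min\{c_1/(2h),\,c_2/2\}$, which contradicts the assumption. For bounded $n$ the statement is trivial after shrinking $c$ further.

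The main obstacle is Step 2. The key point is that a single fixed $(h-1)$-tuple $T$ must have polynomially many valid extensions, and Step 1 delivers only a $n^{-hc/c_1}$ fraction of all copies. So the exponent bookkeeping must keep the loss polynomial and small. One must also handle copies as injections, or ordered tuples, so that the role of $v$ is well defined; otherwise automorphisms of $H_1$ would muddle the pigeonhole. Counting labelled copies costs at most a factor $h!$, which is a constant.
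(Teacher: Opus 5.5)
Your argument is correct and is essentially the original Alon--Pach--Solymosi counting proof; the paper does not reprove this theorem and only cites \cite{APS01}. Solving $c_2(1-hc/c_1)=c$ in Step~3 recovers their constant $c=c_1c_2/(c_1+hc_2)$. Two cosmetic fixes: take $m=\lceil n^{c/c_1}\rceil$, whose rounding costs only a constant factor that is absorbed by shrinking $c$; and passing to labelled copies costs nothing (not a factor $h!$), since each vertex set inducing $H_1$ yields at least one injection.
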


\begin{figure}[h!]
\centering
\begin{tikzpicture}[scale=0.6]
\tikzstyle{vertex}=[circle, draw, fill=white, inner sep=1pt, minimum size=5pt]

	\node[vertex](1) at (-1,1){};
	\node[vertex](2) at (-0.5,0){};
	\node[vertex](3) at (0.5,0){};
	\node[vertex](4) at (1,1){};
	\node[vertex](5) at (0,-1){};

	\foreach \from/\to in {1/2,2/3,3/4,2/5,3/5}
		\draw (\from) -- (\to);
\end{tikzpicture}
\caption{The bull graph.}
\label{fig:bull}
\end{figure}

One can easily show via Theorem \ref{thm:E-H-substitute} that every graph with at most four vertices satisfies the Erd\H{o}s-Hajnal property.
There are four prime 5-vertex graphs: the bull (see Figure \ref{fig:bull}), $C_5$ (the five-vertex cycle), $P_5$ (the five-vertex path), and $\overline{P_5}$. 
In 2008, Chudnovsky and Safra \cite{CS08} showed that the bull graph has the Erd\H{o}s-Hajnal property. 
In 2023, Chudnovsky, Scott, Seymour and Spirkl \cite{CSSS23} showed that  
\begin{theorem}[\text{Chudnovsky-Scott-Seymour-Spirkl}  \cite{CSSS23}]\label{thm:C5 has EH property}
    $C_5$ has the Erd\H{o}s-Hajnal property. 
\end{theorem}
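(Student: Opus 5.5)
The plan is to follow the iterative-sparsification strategy of Chudnovsky, Scott, Seymour and Spirkl, arguing through the polynomial R\"odl formulation. By Theorem \ref{thm:E-H-poly-Rodl-viral-equivalent}, it suffices to show that $C_5$-free graphs have the polynomial R\"odl property. That is, we need $d\ge 1$ such that for every $\epsilon\in(0,\frac12)$, every $C_5$-free graph $G$ has an $\epsilon$-restricted induced subgraph on at least $\epsilon^d|G|$ vertices. Since $\overline{C_5}\cong C_5$, the class is self-complementary. So we may assume $G$ is ``sparse-leaning'' and aim to find either a large $\epsilon$-sparse set or a large set whose complement is $\epsilon$-sparse.

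First, apply Theorem \ref{thm:Rodl} with a fixed small constant $\eta$ (independent of $\epsilon$). This gives a linear-size $\eta$-restricted subgraph, and after complementing, an $\eta$-sparse set $X$ with $|X|\ge \delta_0|G|$. The remaining task is to boost sparsity from $\eta$ to $\epsilon$ while losing only a polynomial factor in $\epsilon$.

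The boosting proceeds in iterations. In each round, given a $y$-sparse set, we either find a large subset that is $y^{1+c}$-sparse (or pure-ish), or we find two large sets $A,B$ that are anticomplete, or nearly so. The key lemma is that in a sparse $C_5$-free graph, every pair of disjoint linear-size sets $A,B$ contains subsets $A'\subseteq A$ and $B'\subseteq B$ of size at least $\mathrm{poly}(y)|A|$ that are anticomplete. We would prove this by growing induced paths. Suppose a vertex $v$ has a sparse but nonzero neighbourhood in $B$. Using sparseness, we find long induced paths $a\!-\!x\!-\!y\!-\!b$ whose endpoints have a common neighbour outside. Such a configuration closes into an induced $C_5$ unless certain neighbourhoods are disjoint, and that disjointness yields the anticomplete pair.

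Given this ``anticomplete pair'' lemma with polynomial dependence, we build a recursive tree of anticomplete pairs. Each iteration either decreases the density exponent or splits the set into many pairwise anticomplete pieces. After roughly $\log(1/\epsilon)$ iterations, each costing a factor of $\mathrm{poly}$, we obtain a set of size $\epsilon^{d}|G|$ that is $\epsilon$-sparse. This last step is the standard argument converting ``polynomial-size anticomplete pairs in sparse graphs'' into polynomial R\"odl.

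The main obstacle is the key lemma itself: obtaining the anticomplete pair with only polynomial loss, rather than the exponential loss from naive iterated neighbourhood arguments. I would first try a ``comb'' or path-growing argument. Take a minimal counterexample and a vertex $v$ in $A$ with many neighbours in $B$. Use the $C_5$-freeness on induced $P_4$'s extended through $v$'s neighbourhood to force nested sparse neighbourhoods. Iterate a constant number of times, so the loss stays polynomial.
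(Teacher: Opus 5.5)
The paper gives no proof of this theorem; it is quoted from Chudnovsky, Scott, Seymour and Spirkl, so your outline has to stand on its own. The outer shell is fine: polynomial R\"odl suffices, and only the easy direction is needed; you start from R\"odl's theorem with a fixed $\eta$; and $\overline{C_5}\cong C_5$ lets you assume sparsity. However, every use of $C_5$-freeness sits in your key lemma, and that lemma is false as stated.

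\textbf{The key lemma is false.} Let $G$ be a random bipartite graph with parts $A,B$ of size $m$ and edge probability $p=y/4$. It has no odd cycles, so it is $C_5$-free, and with high probability it is $y$-sparse. But
\[
\binom{m}{k}^2(1-p)^{k^2}\le \exp\bigl(k(2\log(em/k)-pk)\bigr)\to 0 \quad\text{for } k\ge 3p^{-1}\log m .
\]
So there are no anticomplete $A'\subseteq A$, $B'\subseteq B$ with both sides of size at least $12y^{-1}\log m$, which is far below $\mathrm{poly}(y)\,m$ for fixed $y$. Hence no statement about arbitrary prescribed pairs can hold. A correct lemma needs an escape outcome; here $A$ is itself stable, i.e.\ a much sparser large set exists.

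\textbf{The anticomplete outcome must be asymmetric.} The form the bookkeeping can use is $|X|\ge y^{c}|G|$ and $|Y|\ge(1-O(y))|G|$. Only such a pair can be iterated about $y^{-1}$ times at total cost $(1-O(y))^{y^{-1}}=\Omega(1)$. That yields a blockade of length $k\approx y^{-1}$ with blocks of size $|G|/k^{O(1)}$, as in Lemma \ref{lem:get a blockade by iteration}, which is exactly the input Lemma \ref{lem:give a poly-rodl subgraph} needs. A binary tree of symmetric pairs of size $y^{c}|G|$ gives $k$ blocks of size only $k^{-c\log_2(1/y)}|G|$, and that exponent is unbounded.

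\textbf{The proof of the key lemma is missing.} It is true that an induced $P_4$ $a\!-\!x\!-\!y\!-\!b$ plus a common neighbour of $a,b$ nonadjacent to $x,y$ is an induced $C_5$. But your sketch never explains where such configurations come from in quantity, or how ``disjoint neighbourhoods'' become sets of sizes $y^{c}|G|$ and $(1-O(y))|G|$. ``Iterate a constant number of times'' is precisely the step that needs proof.

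In the framework you are reconstructing, sparsity is first converted into a trichotomy with no forbidden-subgraph hypothesis, as in Lemma \ref{lem:get a comb in a sparse graph}. In a $y^3$-sparse graph, either the asymmetric sparse pair exists, or $G$ is already $2y^4$-sparse, or there is a comb $((a_i,B_i):i\in[\ell])$ with $\ell\ge y^{-1}$ and a vertex $v$ complete to $\bigcup_i B_i$ and anticomplete to $\{a_i\}$. Only then is $C_5$-freeness applied, to this rigid structure. For nonadjacent $b\in B_i$, $b'\in B_j$, the path $a_i\!-\!b\!-\!v\!-\!b'\!-\!a_j$ is induced and would close into an induced $C_5$ if $a_ia_j\in E(G)$; so adjacent teeth force $B_i$ complete to $B_j$. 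Turning constraints like this into a large homogeneous set or a long pure blockade with polynomial bounds is the real content of the $C_5$ case, and your proposal does not supply it.

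\textbf{The round count is off.} If each round improves sparsity only by a constant factor and costs $\mathrm{poly}(y)$, then $\log(1/\epsilon)$ rounds cost $\epsilon^{\Theta(\log(1/\epsilon))}$. That is only a Fox--Sudakov-type quasi-polynomial bound. You need each round to pass from $y$ to $y^{1+c}$, so that $O(\log\log(1/\epsilon))$ rounds suffice and the losses telescope as in Lemma \ref{lem:give a more sparse subgraph 2}.
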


Also in 2023, Nguyen, Scott and Seymour \cite{NSS23} showed that 
\begin{theorem}[Nguyen-Scott-Seymour \cite{NSS23}]\label{thm:P5 has EH property}
    $P_5$ and $\overline{P_5}$ have the Erd\H{o}s-Hajnal property.  
\end{theorem}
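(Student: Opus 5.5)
By complementation it suffices to prove the statement for $P_5$: $\overline{P_5}$ then inherits the Erd\H{o}s-Hajnal property, since a graph has it if and only if its complement does. By Theorem~\ref{thm:E-H-poly-Rodl-viral-equivalent} it is enough to show that $P_5$-free graphs have the polynomial R\"odl property. Concretely, I will aim for a $d\ge 1$ such that every $P_5$-free $G$ and every $\epsilon\in(0,\frac12)$ yield an $\epsilon$-restricted induced subgraph of size at least $\epsilon^d|G|$. The approach is the iterative sparsification framework. By Theorem~\ref{thm:Rodl}, applied with a fixed small constant, I may pass to a linear-size induced subgraph that is $c$-sparse (or $c$-dense, handled by the symmetric argument on the complement). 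The task then becomes boosting constant sparsity to $\epsilon$-sparsity while losing only a polynomial factor in $\epsilon$.

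The boosting is done by a density-increment-type iteration. Work with a minimal counterexample for which the polynomial R\"odl bound holds for all smaller instances with some exponent $d$. Suppose $G$ is $c$-sparse but has no $\epsilon$-sparse induced subgraph of size $\epsilon^d|G|$. I will show that $G$ then contains many disjoint ``nice'' configurations: pairs $(A,B)$ of linear-in-$\epsilon^{O(1)}$ size with $A$ anticomplete to $B$, or pairs that are nearly complete. By minimality, both $A$ and $B$ have large $\epsilon$-restricted sets. An anticomplete pair allows these sets to be combined into a larger sparse set, and iterating over a chain of such pairs pushes the size beyond $\epsilon^d|G|$, which is a contradiction. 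The exponent bookkeeping follows the standard form: if each step loses a factor $\epsilon^{a}$ but gains a multiplicative factor of $2$ in the combined count, choosing $d$ large relative to $a$ closes the induction.

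The structural heart is using $P_5$-freeness to produce these anticomplete pairs inside a sparse graph. Take a vertex $v$ and its neighbourhood $N$, which is small by sparsity, and the set $M$ of non-neighbours. Consider induced paths $v$--$x$--$y$ leading into $M$. If two such length-2 paths end in vertices with large, nearly disjoint neighbourhoods deep in $M$, one extends to an induced $P_5$. So forbidding $P_5$ forces ``comb-like'' or leaf-type structure: for most $y$, the neighbourhoods in the second layer are essentially nested or concentrated. Using sparsity, this yields a large set in the second layer that is anticomplete to a large set further out.

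I expect this step to be the main obstacle, because naive neighbourhood layering loses too much; the losses must be polynomial in $\epsilon$ rather than exponential. My first attempt would be to iterate sparsification across $O(\log(1/\epsilon))$ scales, at each scale finding either a polynomially large anticomplete pair or a denser restricted piece, and to control the losses by a weighted counting of induced $P_4$'s. This is where the Nguyen-Scott-Seymour argument \cite{NSS23} concentrates its effort, so I would follow their scheme and deduce the $\overline{P_5}$ case by complementation.
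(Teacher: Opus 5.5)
\textbf{There is a genuine gap: the structural core is deferred, and the parts you do sketch would not work.} Your opening reductions (complementation, Theorem~\ref{thm:E-H-poly-Rodl-viral-equivalent}, and R\"odl's theorem to reach a constant-restricted piece) are fine. But the proposal stops exactly where the proof has to start, since the structural step is handed off to ``follow their scheme''.

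First, the dense case is not ``symmetric''. $P_5$ is not self-complementary, so the complement of a $c$-dense $P_5$-free graph is a $c$-sparse $\overline{P_5}$-free (house-free) graph. Your path-extension argument uses $P_5$-freeness and says nothing there. In fact you have the difficulty reversed. The sparse $P_5$-free side is the easy one: deleting a leaf of $P_5$ leaves $P_4$, so Lemma~\ref{lem:leaf expanding} gives either a much more restricted piece or a set $X$ with $|X|\ge y^{O(1)}|G|$ anticomplete to a set $Y$ with $|Y|\ge (1-hy)|G|$. Iterated via Lemma~\ref{lem:get a blockade by iteration}, this becomes a long anticomplete blockade.

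The real work is on the sparse $\overline{P_5}$-free side, where large anticomplete pairs need not exist. For example, a sparse random-like graph of girth at least $5$ is house-free and has no linear anticomplete pair, so its complement is $P_5$-free with no linear pure pair. Any argument there must therefore allow the outcome ``a much sparser piece''. The missing idea is the comb. By Lemma~\ref{lem:get a comb in a sparse graph}, a $y^3$-sparse graph either is $2y^4$-sparse, or has a large $X$ with $Y$ $x$-sparse to it, or contains a comb $((a_i,B_i),i\in[\ell])$ with a vertex $v$ complete to $\bigcup_i B_i$ and anticomplete to $\{a_i\}$. In the comb case, $\overline{P_5}$-freeness forces every $u\in B_k$ ($k\neq i$) to be pure on each anticonnected component of $B_i$. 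Otherwise $u$ is mixed on a non-edge $bb'$ of $B_i$, and then $\{v,a_i,u,b,b'\}$ induces $\overline{P_5}$. Counting then gives either a pure $(\ell,w/\ell^2)$-blockade (one large component from each $B_i$) or a long complete blockade (many small components in one $B_i$), which is property $(*)$. This is the remark after Lemma~\ref{lem:property of co-E}. Together with leaf-reducibility and wonderfulness ($P_5$ is the $1$-subdivision of $K_{1,2}$), the machinery of Sections~\ref{sec:iterative-sparsification-2} and~\ref{sec:iterative-sparsification-1} yields the theorem; the paper otherwise quotes it from Nguyen--Scott--Seymour.

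Second, your bookkeeping does not close. Suppose $(A,B)$ is anticomplete with $|A|,|B|\approx\epsilon^{a}|G|$ and you apply the induction hypothesis at the same $\epsilon$. The combined restricted set has size about $2\epsilon^{d+a}|G|$, which is below $\epsilon^{d}|G|$ for every $d$ once $\epsilon^{a}<1/2$. Taking $d$ large relative to $a$ cannot help, because $d$ appears on both sides.

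The gain must come from the restriction parameter, as in Lemma~\ref{lem:give a poly-rodl subgraph}. For instance, the union of equal-sized $k\epsilon$-sparse sets taken from the blocks of an anticomplete $k$-blockade is $\epsilon$-sparse. Such an induction closes only if the blocks have size $|G|/k^{O(1)}$, i.e.\ the loss is polynomial in the blockade length $k$, not in $\epsilon$. A ``chain'' of pairwise anticomplete pieces is exactly such a long pure blockade. Producing these blockades at every scale, through the sparsification steps $y\mapsto y^{b}$ with $b>1$ of Lemma~\ref{lem:give a more sparse subgraph 2}, is the actual content of the proof, and your proposal does not supply it.
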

These results imply that all graphs on at most 5 vertices have the Erd\H{o}s-Hajnal property. 
On the other hand, Nguyen, Scott and Seymour \cite{NSS232} constructed an infinitely family of prime graphs that have the Erd\H{o}s-Hajnal property. The main insight from \cite{NSS232} is an operation that preserves the viral property.

\begin{theorem}\label{thm:degree 1 and degree n-2}
\textnormal{(Nguyen-Scott-Seymour  \cite{NSS232})} 
    Let $\mathcal{F}$ be a finite set of graphs, and let $F_1,\overline{F_2}\in\mathcal{F}$. 
    For $i=1,2$, let $v_i$ be a vertex of $F_i$ with degree one, and let $F_i'=F_i\setminus\{v_i\}$. 
    If $\mathcal{F}_1=\{F_1'\}\cup(\mathcal{F}\setminus\{F_1\})$ and $\mathcal{F}_2=\{\overline{F_2’}\}\cup(\mathcal{F}\setminus\{\overline{F_2}\})$ are both viral, then $\mathcal{F}$ is viral.
\end{theorem}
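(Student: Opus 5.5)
The plan is to argue by contradiction. Fix $\epsilon$, and let $d_1,d_2$ be the exponents witnessing that $\mathcal{F}_1$ and $\mathcal{F}_2$ are viral. Suppose $G$ has fewer than $(\epsilon^d|G|)^{|H|}$ induced copies of every $H\in\mathcal{F}$, where $d\gg d_1,d_2$ is chosen at the end, but $G$ has no $\epsilon$-restricted induced subgraph of size $\epsilon^d|G|$.

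First I will pass to large induced subgraphs. If $X\subseteq V(G)$ has $|X|\ge \epsilon^{c}|G|$ for a suitable $c<d$, then $G[X]$ still has few copies of every member of $\mathcal{F}$, relative to $|X|$. It also has no $\epsilon$-restricted induced subgraph of size $\epsilon^{d_1}|X|$, provided $d\ge c+d_1$. Since $\mathcal{F}_1$ is viral, the only way this can happen is that $G[X]$ contains at least $(\epsilon^{d_1}|X|)^{|F_1'|}$ induced copies of $F_1'$. Symmetrically, $G[X]$ contains at least $(\epsilon^{d_2}|X|)^{|F_2'|}$ copies of $\overline{F_2'}$. So every polynomially large induced subgraph is rich in both $F_1'$ and $\overline{F_2'}$.

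Next I will produce a pair of disjoint sets $A,B$ with $|A|,|B|\ge \epsilon^{c}|G|$, and with the bipartite graph between them of density at most $\epsilon'$ or at least $1-\epsilon'$, for a small $\epsilon'=\epsilon^{c'}$. This is the standard Rödl/Fox–Sudakov-type dichotomy for graphs with few copies of a fixed graph; Theorem~\ref{thm:Nikiforov} gives it, and the polynomial dependence comes from the viral hypotheses. In the sparse case I aim to build a copy of $F_1$, and in the dense case a copy of $\overline{F_2}$. The dense case is the complement of the sparse case, since the degree-one vertex of $F_2$ has degree $|F_2|-2$ in $\overline{F_2}$. So I only treat the sparse case.

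In the sparse case, let $u_1$ be the neighbour of $v_1$ in $F_1$. An induced copy of $F_1$ is obtained from a copy $\phi$ of $F_1'$ in $A$ together with a vertex $b\in B$ such that $N(b)\cap\phi(V(F_1'))=\{\phi(u_1)\}$. I will count such pairs $(\phi,b)$. First, dyadically pigeonhole on degrees into $A$: pass to $B'\subseteq B$ where every $b$ has $|N(b)\cap A|\approx \eta|A|$, with $\epsilon^{O(1)}\le\eta\le\epsilon'$. Vertices with no neighbours in $A$ can be discarded; if almost all of $B$ were anticomplete to $A$, that would already give an anticomplete pair, and I would recurse inside it, exploiting the hypothesis again. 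Then, for each $b\in B'$, restrict attention to $A_b=A\setminus N(b)$ and $N_b=N(b)\cap A$. Count copies of $F_1'$ with $u_1$ mapped into $N_b$ and the remaining vertices mapped into $A_b$. Applying the richness of $F_1'$ to the sets $A_b\cup\{x\}$ on average gives roughly $\eta|A|\cdot(\epsilon^{d_1}|A|)^{|F_1'|-1}$ such copies per $b$. Summing over $b\in B'$ gives more than $(\epsilon^d|G|)^{|F_1|}$ copies of $F_1$, which is the desired contradiction once $d$ is large.

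The main obstacle is that last counting step. Richness of $F_1'$ inside $A_b$ does not immediately force the image of $u_1$ to be extendable into $N_b$, because copies could be correlated with $N(b)$. My first attempt is to average over $b$ instead of fixing it. The quantity $\sum_b \#\{\phi:\phi(u_1)\in N(b),\ \phi(V\setminus u_1)\cap N(b)=\emptyset\}$ is at least $\sum_\phi \deg_{B'}(\phi(u_1))$ minus the number of pairs in which $N(b)$ meets at least two vertices of $\phi$. The subtracted term is controlled by the sparsity $\eta$, via a Cauchy–Schwarz or second-moment bound on codegrees. If codegrees are too large, then some $N(b)$ is itself a large set with many common neighbours. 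In that case I would recurse into that smaller but still polynomially large configuration, where the same argument applies with worse but still polynomial constants.
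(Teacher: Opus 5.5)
\textbf{There is a genuine gap, and it is more than a counting technicality.}

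\textbf{The sparse case.} No contradiction can come from counting copies of $F_1$ across a single pair $(A,B)$. Richness of $F_1'$ inside $A$, or inside $A_b$, says nothing about where the images of $u_1$ sit relative to the neighbourhoods of vertices of $B'$. So $\sum_\phi \deg_{B'}(\phi(u_1))$ has no lower bound. The correction term is not controlled by $\eta$ either: the copies of $F_1'$ are not random, and $N(b)$ may always contain a second vertex of every copy meeting it. Likewise, richness of $F_1'$ in $G[A_b\cup\{x\}]$ does not count copies that use $x$ as $u_1$.

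What the leaf extension genuinely gives is weaker, and it needs \emph{global} sparsity. Suppose the ambient graph is $y$-sparse. Fix a copy $\psi$ of $F_1'\setminus\{u_1\}$ with a large set $X$ of extensions to $F_1'$; richness plus averaging provides one. The neighbourhoods of the vertices of $\psi$ then cover only $O(y)|G|$ vertices. So excluding (or nearly excluding) $F_1$ forces $X$ to be essentially anticomplete to a set $Y$ with $|Y|\ge(1-O(y))|G|$. This is Lemma~\ref{lem:leaf expanding}. Its outcome is a pure pair, which is perfectly consistent with your hypotheses, not a contradiction.

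\textbf{The recursion.} All the weight therefore falls on your ``recurse'' steps, and these do not give a polynomial bound. Your pairs are balanced, with both sides of relative size $\epsilon^{c}$. Each level costs a factor $\epsilon^{c}$, with constants you concede get worse. Nothing bounds the depth independently of $\epsilon$: assembling about $\epsilon^{-1}$ pairwise pure pieces takes roughly $\log(1/\epsilon)$ levels. That yields $\epsilon^{O(\log(1/\epsilon))}|G|$, the quasi-polynomial Fox--Sudakov-type bound, and not $\epsilon^{d}|G|$ for a fixed $d$.

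\textbf{What is missing.} Pure pairs must be normalised so that they are cheap:
\begin{enumerate}
\item[(i)] Work inside a globally $y$-restricted induced subgraph, obtained from R\"odl's theorem at a constant $y$. There the leaf step gives either the unbalanced pair $|X|\ge y^{O(1)}|G|$, $|Y|\ge(1-O(y))|G|$, or a $y^{b}$-restricted subgraph of size $y^{O(1)}|G|$.
\item[(ii)] Unbalanced pairs iterate, by Lemma~\ref{lem:get a blockade by iteration}, into a complete or anticomplete blockade of length $y^{-1}$ with blocks of size $y^{O(1)}|G|$. This is a blockade of the form $(k,|G|/k^{a})$.
\item[(iii)] The second outcome of (i) is absorbed by iterative sparsification (Lemma~\ref{lem:give a more sparse subgraph 2}). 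Its total loss stays polynomial because the exponents compound geometrically.
\item[(iv)] Lemma~\ref{lem:give a poly-rodl subgraph} converts $(k,|F|/k^{a})$ complete or anticomplete blockades in all large induced subgraphs into an $\epsilon$-restricted set.
\end{enumerate}

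\textbf{Minor points.}
\begin{itemize}
\item The polynomial-size weakly sparse or dense pair does exist, but it comes from the elementary greedy-embedding lemma. It does not come from Theorem~\ref{thm:Nikiforov}, whose $\delta$ is not polynomial, nor from the viral hypotheses.
\item Your dyadic pigeonholing has about $\log|A|$ classes, not $O(\log(1/\epsilon))$. So $\eta\ge\epsilon^{O(1)}$ is unjustified.
\end{itemize}
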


By combining Theorems \ref{thm:E-H-poly-Rodl-viral-equivalent} and \ref{thm:degree 1 and degree n-2}, we have 
\begin{corollary}\label{coro:degree 1 and degree n-2}
    Let $\mathcal{F}$ be a finite set of graphs, and let $F_1,\overline{F_2}\in\mathcal{F}$. 
    For $i=1,2$, let $v_i$ be a vertex of $F_i$ with degree one, and let $F_i'=F_i\setminus\{v_i\}$. 
    If $\mathcal{F}_1=\{F_1'\}\cup(\mathcal{F}\setminus\{F_1\})$ and $\mathcal{F}_2=\{\overline{F_2’}\}\cup(\mathcal{F}\setminus\{\overline{F_2}\})$ have the Erd\H{o}s-Hajnal property, then $\mathcal{F}$ has the Erd\H{o}s-Hajnal property.
\end{corollary}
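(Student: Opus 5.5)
The corollary is a direct translation of Theorem~\ref{thm:degree 1 and degree n-2} from the language of viral sets into the language of the Erd\H{o}s-Hajnal property. The bridge is Theorem~\ref{thm:E-H-poly-Rodl-viral-equivalent}: for a \emph{finite} set of graphs, having the Erd\H{o}s-Hajnal property is equivalent to being viral. So the plan is to convert the hypotheses from Erd\H{o}s-Hajnal to viral, apply Theorem~\ref{thm:degree 1 and degree n-2}, and convert the conclusion back.

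First, I check that every set involved is finite, since Theorem~\ref{thm:E-H-poly-Rodl-viral-equivalent} is stated only for finite sets. The set $\mathcal{F}$ is finite by hypothesis. The set $\mathcal{F}_1=\{F_1'\}\cup(\mathcal{F}\setminus\{F_1\})$ has at most $|\mathcal{F}|$ elements, and likewise for $\mathcal{F}_2$. These sets are also nonempty, as required by the definition of the Erd\H{o}s-Hajnal property for a set. The hypothesis is that $\mathcal{F}_1$ and $\mathcal{F}_2$ have the Erd\H{o}s-Hajnal property, so the equivalence gives that both $\mathcal{F}_1$ and $\mathcal{F}_2$ are viral.

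Second, the hypotheses of Theorem~\ref{thm:degree 1 and degree n-2} now hold verbatim. The graphs $F_1,\overline{F_2}$ lie in $\mathcal{F}$, the vertices $v_i$ are degree-one vertices of $F_i$, and $\mathcal{F}_1,\mathcal{F}_2$ are viral. The theorem therefore yields that $\mathcal{F}$ is viral. Applying Theorem~\ref{thm:E-H-poly-Rodl-viral-equivalent} once more to the finite set $\mathcal{F}$ shows that $\mathcal{F}$ has the Erd\H{o}s-Hajnal property.

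There is no substantive obstacle here. The only point needing care is the finiteness bookkeeping above, together with the observation that the two sets are defined identically in both statements.
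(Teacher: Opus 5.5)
Your proposal is correct and is exactly the paper's argument: the paper obtains the corollary by using Theorem~\ref{thm:E-H-poly-Rodl-viral-equivalent} to translate the Erd\H{o}s-Hajnal hypotheses on the finite sets $\mathcal{F}_1,\mathcal{F}_2$ into virality. It then applies Theorem~\ref{thm:degree 1 and degree n-2} and translates back for $\mathcal{F}$. Your extra check that the sets involved are finite and nonempty is the only bookkeeping the paper leaves implicit.
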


As a corollary, the two 6-vertex graphs in Figure \ref{fig:6-vertex-E-H}  both have the Erd\H{o}s-Hajnal property~\cite{NSS232}.

\begin{figure}[h!]
\centering
\begin{tikzpicture}[scale=1]
\tikzstyle{vertex}=[circle, draw, fill=white, inner sep=1pt, minimum size=5pt]
\tikzstyle{node}=[circle, draw, fill=black, inner sep=0.5pt, minimum size=1pt]

	\node[vertex](1) at (-1,1){};
	\node[vertex](2) at (0,1){};
	\node[vertex](3) at (1,1){};
	\node[vertex](4) at (-1,0){};
	\node[vertex](5) at (0,0){};
	\node[vertex](6) at (1,0){};

	\foreach \from/\to in {1/2,2/3,1/4,2/4,2/5,4/5,5/6}
		\draw (\from) -- (\to);
\end{tikzpicture}
\hspace{10mm}
\begin{tikzpicture}[scale=1]
\tikzstyle{vertex}=[circle, draw, fill=white, inner sep=1pt, minimum size=5pt]
\tikzstyle{node}=[circle, draw, fill=black, inner sep=0.5pt, minimum size=1pt]

	\node[vertex](1) at (-1,1){};
	\node[vertex](2) at (0,1){};
	\node[vertex](3) at (1,1){};
	\node[vertex](4) at (-1,0){};
	\node[vertex](5) at (0,0){};
	\node[vertex](6) at (1,0){};

	\foreach \from/\to in {1/2,2/3,1/4,2/4,2/5,3/5,4/5,5/6}
		\draw (\from) -- (\to);
\end{tikzpicture}
\caption{Two prime 6-vertex graphs that have the Erd\H{o}s-Hajnal property.}
\label{fig:6-vertex-E-H}
\end{figure}

\subsection{Erd\H{o}s-Hajnal for two graphs}

In the following we mention some results on Erd\H{o}s-Hajnal property of graphs with more than one forbidden induced subgraph. 
Bousquet, Lagoutte and Thomass\'{e} \cite{BLT15} proved that for every integer $k\geq 1$, $\{P_k,\overline{P_k}\}$ has the Erd\H{o}s-Hajnal property. 
A stronger result that if $H_1,H_2$ are forests, then $\{H_1,\overline{H_2}\}$ has the Erd\H{o}s-Hajnal property~\cite{CSSS20} was proved by Chudnovsky, Scott, Seymour and Spirkl. In the paper which proved that $C_5$ has the Erd\H{o}s-Hajnal property~\cite{CSSS23}, the authors gave a generalized result. Let $H$ be a graph with vertex set $\{b_1,\ldots,b_k\}$. The {\em star-expansion} of $H$, is the graph obtained from $H$ that adds $k+1$ vertices $a_1,\ldots,a_k,v$, where $a_i$ is adjacent to $b_i$ for $i\in[k]$, and $v$ is adjacent to $a_1,\ldots,a_k$, and there are no other edges (see Figure \ref{fig:star expansion} for an example). They proved the following theorem.

\begin{figure}[h!]
\centering
\begin{tikzpicture}[scale=0.6]
\tikzstyle{vertex}=[circle, draw, fill=white, inner sep=1pt, minimum size=5pt]

	\node[vertex](1) at (-1.5,-0.5){};
	\node[vertex](2) at (-0.5,-0.5){};
	\node[vertex](3) at (0.5,-0.5){};
	\node[vertex](4) at (1.5,-0.5){};
	
	\node at (0,-2) {$P_4$};

	\foreach \from/\to in {1/2,2/3,3/4}
		\draw (\from) -- (\to);
\end{tikzpicture}
\hspace{10mm}
\begin{tikzpicture}[scale=0.6]
\tikzstyle{vertex}=[circle, draw, fill=white, inner sep=1pt, minimum size=5pt]

	\node[vertex](1) at (-1.5,-1){};
	\node[vertex](2) at (-0.5,-1){};
	\node[vertex](3) at (0.5,-1){};
	\node[vertex](4) at (1.5,-1){};

    \node[vertex](11) at (-1.5,0){};
	\node[vertex](21) at (-0.5,0){};
	\node[vertex](31) at (0.5,0){};
	\node[vertex](41) at (1.5,0){};

    \node[vertex](0) at (0,1){};

	\node at (0,-2) {star-expansion of $P_4$};

	\foreach \from/\to in {1/2,2/3,3/4}
		\draw (\from) -- (\to);

    \foreach \from/\to in {1/11,2/21,3/31,4/41}
		\draw (\from) -- (\to);

    \foreach \from/\to in {0/11,0/21,0/31,0/41}
		\draw (\from) -- (\to);
\end{tikzpicture}
\caption{$P_4$ and its star-expansion.}
\label{fig:star expansion}
\end{figure}

\begin{theorem}
\textnormal{(Chudnovsky-Scott-Seymour-Spirkl \cite{CSSS23})} 
Let $H$ be a forest. Let $H_1$ be the star expansion of $H$, and $H_2$ be the star-expansion of $\overline{H}$. Then $\{H_1,H_2,\overline{H_1},\overline{H_2}\}$ has the Erd\H{o}s-Hajnal property.
\end{theorem}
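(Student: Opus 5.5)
The statement is symmetric under complementation: the set $\{H_1,H_2,\overline{H_1},\overline{H_2}\}$ is closed under taking complements. By Theorem~\ref{thm:E-H-poly-Rodl-viral-equivalent} it therefore suffices to prove the polynomial R\"odl property for this set. I would do this through the standard ``polynomial pure pair'' reduction.

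\textbf{Reduction.} It is enough to show the following. There is $c>0$ such that every $\{H_1,H_2,\overline{H_1},\overline{H_2}\}$-free graph $G$ which is $\epsilon$-sparse (for $\epsilon$ below some fixed threshold) contains an anticomplete pair $(A,B)$ with $|A|,|B|\geq \epsilon^{c}|G|$. The complementary dense case is the same statement applied to $\overline{G}$. Iterating such pure pairs inside the pieces yields an $\epsilon$-restricted induced subgraph of size $\epsilon^{d}|G|$. This is the usual bookkeeping that converts polynomial pure pairs into the polynomial R\"odl property, in the spirit of \cite{CSSS20}. To get started in the sparse setting, I would first apply the Fox--Sudakov form of Theorem~\ref{thm:Rodl} with a fixed small constant. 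This passes to a linear-size induced subgraph that is sparse or dense, and by the symmetry we may assume it is sparse.

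\textbf{Main step.} Assume $G$ is $\epsilon$-sparse, $H_1$-free, and has no anticomplete pair of size $\epsilon^{c}|G|$. We want a contradiction, that is, an induced copy of the star-expansion $H_1$ of the forest $H=\{b_1,\dots,b_k\}$. The key tool is the theorem of \cite{CSSS20}: a sparse graph with no polynomial anticomplete pair contains every forest as an induced subgraph, and it does so robustly. Concretely, given polynomially large disjoint sets $X_1,\dots,X_k$ with suitable density between the prescribed pairs, one finds an induced copy of $H$ with $b_i\in X_i$.

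I would use this as follows.
\begin{enumerate}
\item Pick a vertex $v$ and let $N=N(v)$. Since $G$ has no large anticomplete pair, most vertices have a neighbor in a polynomially large neighborhood. Using sparsity I would pass to a stable set $A\subseteq N$ of size large enough to cover a polynomial fraction of $V(G)\setminus N[v]$.
\item Partition $A$ into $k$ parts $A_1,\dots,A_k$. For each $i$, let $X_i$ be the set of vertices outside $N[v]$ that have a neighbor in $A_i$ but few neighbors in $\bigcup_{j\neq i}A_j$.
\item Embed $H$ inducedly with $b_i\in X_i$, using the forest-embedding theorem.
\item Choose, for each $i$, a private neighbor $a_i\in A_i$ of $b_i$ that is nonadjacent to all $b_j$ with $j\neq i$.
\end{enumerate}
Together with $v$, the vertices $a_1,\dots,a_k$ and $b_1,\dots,b_k$ then induce $H_1$.

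\textbf{Expected obstacle.} The hard part is step 4 together with its interaction with step 3. We need the embedded copy of $H$ to admit pairwise distinct private neighbors $a_i$, with no cross edges $a_ib_j$ for $j\neq i$, and sparsity alone only controls this on average. I would first try a ``comb''-type cleaning. Choose the $b_i$ one at a time, and after each choice delete from the remaining candidate sets every vertex with many neighbors in the already chosen $a_j$. Polynomial losses are affordable because $k$ is fixed.

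When this cleaning fails, the failure should itself produce structure. Either it yields a large anticomplete pair, contradicting the assumption. Or it forces many vertices to be adjacent to most of $A$. In the latter case, inside that near-complete region the roles of edges and non-edges between the $a$'s and the $b$'s swap. There the same argument, run on $\overline{H}$, produces either a large pure pair or an induced $H_2$ (or $\overline{H_2}$ after complementing). Each outcome contradicts the hypotheses, which completes the sketch.
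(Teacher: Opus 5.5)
The paper does not prove this theorem; it only quotes it from \cite{CSSS23}. So I am judging your sketch on its own terms. There is a genuine gap at the start: the statement you reduce to is false.

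Sparseness is monotone, so a graph of maximum degree about $n^{1/8}$ is $\epsilon$-sparse for every fixed $\epsilon$. Take $G(n,p)$ with $p=n^{-7/8}$ and delete one vertex from each cycle of length at most $7$. With high probability the result has girth at least $8$. It also has no anticomplete pair with both sides of size $3n^{7/8}\log n$, which is far below $\epsilon^{c}|G|$.

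Now suppose the forest $H$ has at least three vertices and at least one edge, for example $H=P_3$, or $H=P_4$ (the case yielding $C_6,C_7$). Then:
\begin{itemize}
\item $H_1$ contains the induced $5$-cycle $v\,a_i\,b_i\,b_j\,a_j$ through an edge $b_ib_j$ of $H$;
\item $H_2$ contains such a $5$-cycle through an edge of $\overline{H}$;
\item $\overline{H_1}$ and $\overline{H_2}$ contain triangles on three of the $a_i$.
\end{itemize}
So this graph lies in the class, is $\epsilon$-sparse, has no polynomial anticomplete pair, and contains no $H_1$. Your ``Main step'' therefore cannot be proved. Equivalently, your reduction combined with Theorem~\ref{thm:Rodl} at a fixed $\epsilon_0$ would give linear pure pairs in every graph of the class, i.e.\ the strong Erd\H{o}s--Hajnal property, which high-girth graphs refute.

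You can see concretely where your steps break. Steps 1--4 have the right shape (this is the comb/star-expansion picture). But in such a graph the sets $X_i,X_j$ of step 2 are automatically anticomplete, because an edge between them closes a $5$-cycle through $v$. Yet they have size only about $\deg(v)\Delta(G)\approx n^{1/4}$. Step 1 claims that $A\subseteq N(v)$ covers a polynomial (in $\epsilon$) fraction of the graph. That silently assumes degrees of order $\epsilon^{O(1)}|G|$, which is exactly what fails here.

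The missing idea is sparsification. The dichotomy has to be: either $G$ has a large induced subgraph that is much sparser than the current parameter, or there is pure structure polynomial in the current sparsity. This must then be iterated down to the target sparsity. That is the role of the outcome ``$G$ is $2y^4$-sparse'' in Lemma~\ref{lem:get a comb in a sparse graph}, and of Lemma~\ref{lem:give a more sparse subgraph 2}.

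Your ``usual bookkeeping'' is also not valid as stated. A pure pair whose sides have size only $\epsilon^{c}|G|$ does not iterate into an $\epsilon$-restricted subgraph: each side is only $\epsilon^{1-c}$-sparse relative to its own size. The conversions used in this framework need one of two things:
\begin{itemize}
\item pairs $(X,Y)$ with $|Y|\ge(1-by)|F|$, as in Lemma~\ref{lem:get a blockade by iteration}; or
\item pure $(k,|F|/k^{a})$-blockades with $k$ up to $\epsilon^{-1}$, as in Lemma~\ref{lem:give a poly-rodl subgraph}.
\end{itemize}

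Finally, the step where a failed cleaning yields a ``near-complete region'', on which you rerun the argument with $\overline{H}$ to get $H_2$, is too vague to check. In an $\epsilon$-sparse graph, a vertex adjacent to most of $A$ can exist only if $|A|\le 2\epsilon|G|$. You would still have to show how an induced $H_2$ is actually assembled there.
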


Since $P_4$ is the complement of itself and the star-expansion of $P_4$ contains $C_6,C_7$, it follows that $\{C_6,\overline{C_6}\}$ and $\{C_7,\overline{C_7}\}$ have the Erd\H{o}s-Hajnal property. In the same paper, it was proved that $\{\hat{C_5},\overline{\hat{C_5}}\}$ has the Erd\H{o}s-Hajnal property, where $\hat{C_5}$ is the graph obtained from $C_5$ by adding a vertex that is adjacent to two adjacent vertices of the $C_5$.

\subsection{Our Contributions}

\begin{figure}[h!]
\centering
\begin{tikzpicture}[scale=0.6]
\tikzstyle{vertex}=[circle, draw, fill=white, inner sep=1pt, minimum size=5pt]

	\node[vertex](1) at (0.5,1){};
	\node[vertex](2) at (-0.5,1){};
	\node[vertex](3) at (-0.5,0){};
	\node[vertex](4) at (-0.5,-1){};
	\node[vertex](5) at (0.5,-1){};
	\node[vertex](6) at (0.5,0){};
	\node at (0,-2) {$E-graph$};

	\foreach \from/\to in {1/2,2/3,3/4,4/5,3/6}
		\draw (\from) -- (\to);
\end{tikzpicture}
\hspace{10mm}
\begin{tikzpicture}[scale=0.6]
\tikzstyle{vertex}=[circle, draw, fill=white, inner sep=1pt, minimum size=5pt]

	\node[vertex](1) at (0,-0.3){};
	\node[vertex](2) at (-1,-1){};
	\node[vertex](3) at (-1,0.3){};
	\node[vertex](4) at (1,-1){};
	\node[vertex](5) at (1,0.3){};
	\node[vertex](6) at (0,1){};
	\node at (0,-2) {co-$E$};

	\foreach \from/\to in {1/2,1/3,1/4,1/5,2/3,2/4,3/5,4/5,3/6,5/6}
		\draw (\from) -- (\to);
\end{tikzpicture}
\caption{The graphs $E$-graph and its complement co-$E$.}
\label{fig:E}
\end{figure}

\begin{figure}[h!]
\centering
\begin{tikzpicture}[scale=0.6]
\tikzstyle{vertex}=[circle, draw, fill=white, inner sep=1pt, minimum size=5pt]

	\node[vertex](1) at (0.7,0){};
	\node[vertex](2) at (-0.7,0){};
	\node[vertex](3) at (0,-1){};
	\node[vertex](4) at (1,1){};
	\node[vertex](5) at (2,1){};
	\node[vertex](6) at (-2,0){};
	\node at (0,-2) {Bird};

	\foreach \from/\to in {1/2,2/3,3/1,1/4,4/5,2/6}
		\draw (\from) -- (\to);
\end{tikzpicture}
\hspace{10mm}
\begin{tikzpicture}[scale=0.6]
\tikzstyle{vertex}=[circle, draw, fill=white, inner sep=1pt, minimum size=5pt]

	\node[vertex](1) at (0,-0.3){};
	\node[vertex](2) at (-1,-1){};
	\node[vertex](3) at (-1,0.3){};
	\node[vertex](4) at (1,-1){};
	\node[vertex](5) at (1,0.3){};
	\node[vertex](6) at (0,1){};
	\node at (0,-2) {co-Bird};

	\foreach \from/\to in {1/2,1/3,1/5,2/3,2/4,3/5,4/5,3/6,5/6}
		\draw (\from) -- (\to);
\end{tikzpicture}
\caption{The graph Bird and its complement co-Bird.}
\label{fig:Bird}
\end{figure}

    Let $E$-graph and Bird be the graphs in Figures \ref{fig:E} and \ref{fig:Bird}, respectively. 
    In this paper, we prove the following theorems. 
    \begin{theorem}\label{thm:E}
        $E$-graph has the Erd\H{o}s-Hajnal property. 
    \end{theorem}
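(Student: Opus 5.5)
By Theorem~\ref{thm:E-H-poly-Rodl-viral-equivalent}, it suffices to show that the class of $E$-free graphs has the polynomial R\"odl property. Because Erd\H{o}s--Hajnal is closed under complements, I may equivalently work with co-$E$ wherever that is more convenient. The $E$-graph is a path $a_1a_2a_3a_4a_5$ with an extra leaf $v$ attached to the middle vertex $a_3$; in particular it contains $P_5$.

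I would follow the iterative sparsification strategy of Nguyen, Scott and Seymour~\cite{NSS23} for $P_5$. I argue by contradiction. Suppose $G$ is $E$-free and has no $\epsilon$-restricted induced subgraph of size $\epsilon^d|G|$. Using a R\"odl/Fox--Sudakov-type step, pass to an induced subgraph of polynomial size which is sparse. In that subgraph, repeatedly find pairs $(A,B)$ of polynomially large sets that are either sparse to each other or dense to each other. Then show that, unless we obtain a large restricted set outright, we can build an induced ``comb'' structure: a long induced path or matching $x_1y_1,\dots,x_ky_k$ together with large sets $B_i$, where each $B_i$ attaches to $y_i$ and is anticomplete to the rest. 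A $P_5$-free graph forbids long combs of this kind immediately. For $E$-free graphs, a comb with three consecutive teeth, plus one extra pendant neighbour at the central spine vertex, yields an induced $E$. So the aim is to show that a suitably ``nice'' configuration either forces such a pendant neighbour, giving an induced $E$, or lets us descend into a homogeneous-like piece where induction on the density exponent applies.

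\textbf{Key steps in order.} First, I would formulate a relaxed niceness condition. Call a graph $(\epsilon,d)$-nice if every large induced subgraph contains a vertex whose neighbourhood is not too small and not too large. I would then prove that every $E$-free graph failing polynomial R\"odl contains a large nice induced subgraph. This is a pure density-increment argument, as in~\cite{NSS23}. Second, in a nice graph, I would grow an induced path $P$ greedily, keeping track of the large set $C$ of vertices anticomplete to the current path except at its end. Third, I would argue that when the path reaches length about $3$ with a large $C$, the middle vertex $a_3$ has, by niceness, a neighbour $v$ outside $P\cup N(a_2)\cup N(a_4)\cup N(a_1)\cup N(a_5)$. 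The point is that sparsity makes these neighbourhoods small compared with the guaranteed degree of $a_3$. Such a $v$ completes an induced $E$, a contradiction. Fourth, I would handle the failure cases. If the sparsity estimates break down, some neighbourhood is dense to a large set, and I recurse in the complement; there co-$E$ plays the analogous role. Finally, I would combine the cases to obtain the polynomial bound $\epsilon^{d}$ with $d$ depending only on the recursion depth.

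\textbf{Main obstacle.} The hardest step is the third one. It requires choosing $v$ adjacent to $a_3$ while avoiding \emph{all} other path vertices simultaneously, and this must hold at polynomial scales. The degree of $a_3$ must beat the union of the neighbourhoods of the other four vertices, yet without losing the large attachment set $C$ for $a_5$. I would first try to guarantee this by choosing $a_3$ from a set where degrees are concentrated in a window $[\epsilon^{c}|G|,\epsilon^{c'}|G|]$, which is what the relaxed niceness should provide. I would fall back to a Nguyen--Scott--Seymour style ``comb lemma'' if the window cannot be made uniform.
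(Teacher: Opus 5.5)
\textbf{There is a genuine gap: your plan only handles the easy regime and dismisses the hard one.}

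\emph{The easy regime.} R\"odl's theorem gives an $\epsilon$-restricted set, i.e.\ one that is sparse \emph{or} co-sparse. So you cannot simply pass to a sparse subgraph of an $E$-free graph. The sparse $E$-free side is already routine. $E$ is a tree, and deleting an end of its path leaves the chair, which has the Erd\H{o}s--Hajnal property (substitute a stable pair into $P_4$). Hence $\{E\}$ is leaf-reducible, and Lemma~\ref{lem:leaf expanding} directly gives an anticomplete pair $X,Y$ with $|Y|\ge(1-hy)|G|$. This is exactly how the paper disposes of that case. Your observation that a comb with a special vertex and three teeth yields an induced $E$ is correct, but it lives in this easy regime. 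Your Step~3 counting would not work even there. Niceness only gives $|N(a_3)|\ge\epsilon^{c}|G|$, which is no larger than the sparsity bound on the other four degrees. More importantly, nothing prevents every neighbour of $a_3$ off the path from being adjacent to $a_2$ or $a_4$, so no size estimate produces the pendant vertex.

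\emph{The hard regime.} The real difficulty is sparse co-$E$-free graphs (equivalently, dense $E$-free graphs), which your Step~4 dismisses with ``co-$E$ plays the analogous role''. It does not. Co-$E$ contains a triangle, so every bipartite graph is co-$E$-free, and bipartite graphs contain arbitrarily large combs with a special vertex. In this regime a comb cannot by itself produce an induced co-$E$. Nor can a dense six-vertex graph like co-$E$ be assembled greedily by avoiding neighbourhoods in a sparse graph.

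\emph{What the paper does instead} is turn the comb into structure rather than a contradiction.
\begin{itemize}
\item Lemma~\ref{lem:get a comb in a sparse graph} gives a comb $((a_i,B_i):i\in[\ell])$ and a vertex $v$ complete to $\bigcup_i B_i$ and anticomplete to the $a_i$.
\item For $u\in B_k$ with $k\ne i$, the vertices $v,a_i$ are nonadjacent and complete to $B_i$, and $u\in N(v)\setminus N(a_i)$. Co-$E$-freeness then forbids $u$ from having, on an induced path in $B_i$ on which it is mixed, either two consecutive non-neighbours or three consecutive neighbours followed by a non-neighbour.
\item This forces $u$ to be pure on every induced $H_5$ in $B_i$ (the $5$-wheel with a pendant at each rim vertex). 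After iterated quotients by the mixing relation, $u$ is also pure on every block of a pure blockade covering the $H_5$-containing part of $B_i$.
\item Using the Erd\H{o}s--Hajnal property of $\{H_5,\text{co-}E\}$ (obtained from $C_5$ and Corollary~\ref{coro:degree 1 and degree n-2}), Lemma~\ref{lem:proving (*)} yields property $(*)$. That property drives the iterative sparsification.
\item The wonderfulness of $E$ (it is an induced subgraph of the $1$-subdivision of $K_{1,3}$) handles the complete-or-weakly-sparse blockade outcome of generalized niceness.
\end{itemize}
None of these ingredients appears in your proposal, so as written it does not prove the theorem.
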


    \begin{theorem}\label{thm:Bird}
        Bird has the Erd\H{o}s-Hajnal property. 
    \end{theorem}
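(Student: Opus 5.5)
By \cref{thm:E-H-poly-Rodl-viral-equivalent}, it suffices to show that Bird (equivalently co-Bird) has the polynomial R\"odl property. The natural route is the iterative sparsification framework of Nguyen--Scott--Seymour used for $P_5$. We fix $\epsilon$ and suppose $G$ is Bird-free with no $\epsilon$-restricted induced subgraph of size $\epsilon^d|G|$ for a large $d$. We then build a ``nice'' configuration: disjoint sets $A,B$ of polynomial size (in $\epsilon$) such that $A$ is sparse, $B$ is sparse, and the bipartite graph between them is sparse. Each vertex of $A$ has some neighbours in $B$, with no vertex of $B$ adjacent to too much of $A$. Iterating this sparsification either produces a large restricted set or produces long sparse ``combs''. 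A comb here is a matching $a_ib_i$ with the $a_i$'s forming a stable set and the $b_i$'s forming a stable set, with few cross edges.

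The key structural step is the following. In a graph that is sparse in the above relative sense, we want to find an induced comb of length $3$, together with a vertex $c$ complete to two adjacent $b$'s or to a triangle-forming configuration. This is the point where we locate the Bird: a triangle $x,y,z$ with pendant path $x$--$u$--$w$ and pendant edge $y$--$v$. Since the Bird contains both the bull and $P_5$, a Bird-free graph need not be $P_5$-free or bull-free. We must therefore exploit that $G$ either has many induced $P_5$'s or bulls, and extend them. The plan is as follows.

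\begin{enumerate}[(1)]
\item Use the $P_5$ machinery to get, in the dense-versus-sparse dichotomy, a large set $X$ together with many induced $P_5$'s inside a sparse part.
\item Extend: the middle vertex of such a path together with a common neighbour of two consecutive vertices forms a triangle. The required extra vertex $c$ will be found by counting, using that $X$ is not $\epsilon$-restricted, so its vertices have many neighbours and many non-neighbours.
\item Use a Chudnovsky--Safra type argument for the dense side. In co-Bird-free structure, a vertex anticomplete to a homogeneous-like set forces a decomposition that we can recurse on, giving polynomial bounds via \cref{thm:E-H-substitute}-style substitution counting.
\end{enumerate}

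The main obstacle is step (2). We must guarantee that the triangle vertex $c$ attaches correctly, with no extra adjacencies to the pendant vertices, while keeping polynomial losses. I would first try to impose a generalized niceness condition: every vertex of the sparse part has at most $\epsilon$-fraction neighbours in each other block. With this condition, a random choice of the pendants avoids $c$'s neighbourhood with probability bounded below. The resulting lemma would show that a Bird-free graph with no large restricted set contains a polynomially large ``comb'' structure. That structure then yields a large sparse pair, contradicting the assumption and completing the polynomial R\"odl property.
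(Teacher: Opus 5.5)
Your proposal has a genuine gap. Its central mechanism, locating an induced Bird in a sparse configuration to reach a contradiction, cannot work, and the case that carries the real difficulty gets no argument.

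On the sparse Bird-free side there is nothing to locate. Every triangle-free graph is Bird-free, so no counting or random choice of pendants can produce the triangle vertex $c$ of step (2). Your comb also clashes with the target: the $b_i$ are stable, yet $c$ is to be complete to ``two adjacent $b$'s''; and Bird has no induced $3K_2$ in any case. Likewise, ``not $\epsilon$-restricted'' gives no vertex adjacent to exactly two consecutive vertices of a given induced $P_5$. In a Bird-free graph such a vertex never exists, and the whole content lies in what that failure forces. In the paper this side is handled by leaf-reducibility instead. Deleting a leaf of Bird leaves a graph with the Erd\H{o}s--Hajnal property (e.g.\ the bull), and Lemma~\ref{lem:leaf expanding} then gives either a restricted subgraph or an anticomplete pair $X,Y$ with $|Y|\ge(1-hy)|G|$.

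The hard case is the sparse co-Bird-free graph (your ``dense side''). Step (3) does not say what decomposition co-Bird-freeness forces, or why recursing on it gives polynomial bounds. What the iteration actually produces there (Lemma~\ref{lem:get a comb in a sparse graph}) is a comb $((a_i,B_i):i\in[\ell])$ with large teeth $B_i$, plus a vertex $v$ complete to $\bigcup_i B_i$ and anticomplete to $\{a_1,\ldots,a_\ell\}$. This configuration need not contain a co-Bird: it can be bipartite, while co-Bird contains a triangle.

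The missing idea is to extract structure, not a contradiction. The vertices $v$ and $a_i$ are nonadjacent and complete to $B_i$, and each $u\in B_k$ ($k\ne i$) lies in $N(v)\setminus N(a_i)$. So co-Bird-freeness makes $u$ pure on every induced $E$-graph inside $B_i$ (Claim~\ref{clm:homo structure}). Split $B_i$ into an $E$-free part and the vertices covered by induced $E$-graphs, and quotient repeatedly by the ``mixed'' relation. This gives a pure blockade with $E$-free pattern whose blocks are pure to all other teeth. The counting argument of Lemma~\ref{lem:proving (*)} then yields property $(*)$, but only because the $E$-graph itself has the Erd\H{o}s--Hajnal property (Theorem~\ref{thm:E}, a separate result proved via $\{H_5,\text{co-}E\}$).

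Finally, your last step is not sound as stated. A large sparse pair does not contradict the absence of a large restricted set; it must be fed back into the iteration. Converting a blockade with pairwise complete or weakly sparse blocks into an anticomplete pair covering almost all of $G$ needs the wonderfulness of Bird (Lemma~\ref{lem:E and Bird are wonderful}), which your plan never addresses.
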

    
    Since both $E$-graph and Bird contain an induced $P_5$, both Theorem \ref{thm:E} and Theorem \ref{thm:Bird} generalize the main result by Nguyen, Scott and Seymour \cite{NSS23}. Since Bird graph contains the bull graph, Theorem \ref{thm:Bird} generalizes the result of Chudnovsky and Safra \cite{CS08}. In fact, as we shall see, these two previously independent results are unified within our extended iterative sparsification framework (see the remark at the end of Section \ref{sec:proving (*)}).

    While the extension from five-vertex graphs to six-vertex graphs may appear incremental, the technical obstacles are substantial: the existing framework of Nguyen, Scott and Seymour does not directly apply to these cases, and new structural tools-a generalized niceness condition and a comb-based criterion-are required.
    
    Our proofs are inspired by \cite{NSS23}. 
    The proof of Theorems \ref{thm:E} and \ref{thm:Bird} proceeds in four main steps. First, 
    we extend the framework of Nguyen, Scott and Seymour \cite{NSS23} by introducing a generalized niceness condition that relaxes the original notion of "nice", and show that any finite class of graphs satisfying this property, along with two technical conditions called leaf-reducibility and wonderfulness (see Section \ref{sec:pre} for precise definitions), has the Erd\H{o}s-Hajnal property (see Section \ref{sec:iterative-sparsification-2}).
    
    \begin{lemma}\label{lem:reduce EH to gene-nice}
        Let $\mathcal{F}$ be a finite class of graphs that is leaf-reducible and wonderful. If $\mathcal{F}$ is generalized nice, then $\mathcal{F}$ has the Erd\H{o}s-Hajnal property. 
    \end{lemma}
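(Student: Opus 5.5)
By \cref{thm:E-H-poly-Rodl-viral-equivalent}, it suffices to show that $\mathcal{F}$ is viral. So we fix $d$ large in terms of $\mathcal{F}$. Let $G$ be a graph with $\mathrm{ind}_H(G)<(\epsilon^d|G|)^{|H|}$ for all $H\in\mathcal{F}$, and suppose $G$ has no $\epsilon$-restricted induced subgraph of size $\epsilon^d|G|$. We aim for a contradiction, following the iterative sparsification scheme of Nguyen, Scott and Seymour \cite{NSS23}.

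First we pass to a well-behaved subgraph. Using \cref{thm:Nikiforov} with a fixed small constant, together with the assumption, we obtain a linear-size induced subgraph $G'$ that is $\epsilon_0$-sparse for some fixed $\epsilon_0$; the dense case is symmetric, via complementation and the complement-closed form of the hypotheses. Leaf-reducibility is used here in the manner of \cref{thm:degree 1 and degree n-2}. Deleting a leaf from a member of $\mathcal{F}$ yields a family that we may assume viral by induction on $\sum_{H\in\mathcal{F}}|H|$. This lets us assume that every vertex set of $G'$ has many vertices whose neighbourhoods are not tiny and do not induce "leaf-removed" copies. Concretely, it gives polynomial lower bounds on the densities between pairs of large subsets, and hence a rich supply of bipartite pairs $(A,B)$ with edge density between $\epsilon^{c}$ and $1-\epsilon^{c}$.

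Next comes the sparsification iteration. We build a nested sequence of pairs $(A_i,B_i)$ in which $|A_i|,|B_i|\ge \epsilon^{O(i)}|G|$, while the density between the parts decreases geometrically, $y_{i+1}\le y_i^{1+\eta}$ for a fixed $\eta>0$. After $O(\log\frac1\epsilon)$ steps the pair is $\epsilon$-sparse with polynomial size. This yields the desired $\epsilon$-restricted set, via a standard lemma that turns a sparse bipartite pair into a sparse induced set using the hypothesis again, and gives the contradiction. Each step uses the \emph{generalized niceness} condition. Given a pair at density $y$ in which no sub-pair sparsifies further, we find many induced configurations, namely combs or templates indexed by the structure in the definition. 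Generalized niceness asserts that these configurations can be glued into an induced copy of some $H\in\mathcal{F}$. Wonderfulness then upgrades this one copy into $\ge(\epsilon^d|G|)^{|H|}$ copies, by a supersaturation/counting argument over polynomially many independent choices of the vertices. This contradicts the hypothesis on $\mathrm{ind}_H(G)$.

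The main obstacle is the single sparsification step. We must show that if $(A,B)$ has density $y$ and no polynomially large sub-pair has density $\le y^{1+\eta}$, then we can find the generalized-nice configuration with \emph{polynomially many} copies. Under the original niceness, one copy is found directly from a vertex with a large neighbourhood on one side and a large non-neighbourhood on the other. Under the relaxed condition, we expect the copy to exist only after an additional case split. I would first try to handle this through a dichotomy on the pair: either a random-like subpair supports the required comb, or some vertex set is nearly homogeneous to a large part, which itself gives a sparser subpair and so continues the iteration. The delicate part is balancing the exponents so that the loss per step stays polynomial. I would choose $\eta$ depending only on $\max_{H\in\mathcal{F}}|H|$ and let $d$ absorb the product of the losses.
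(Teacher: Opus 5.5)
Your proposal has a genuine gap, and it begins with a misreading of the hypotheses. In this paper, \emph{generalized niceness} is a four-outcome dichotomy for $\overline{\mathcal{F}}$-free graphs. Its outcomes are: a long blockade whose blocks are pairwise complete or weakly $\epsilon^{c_2}$-sparse; a homogeneous set of size $(\epsilon^{c_3}|G|)^{c_4}$; a long complete or anti-complete blockade; or an $\epsilon^{c_7}$-restricted subgraph. It never asserts that some configuration ``glues'' into an induced copy of a member of $\mathcal{F}$. Likewise, \emph{wonderfulness} is not a supersaturation statement. It says that if $\mathcal{B}$ is a long blockade of equal-size anti-connected blocks that are pairwise complete or $y^a$-sparse, then either there is a $y^4$-restricted set of size $w$, or some block $B_i$ has at most $y|G|$ outside vertices $v$ with $|N(v)\cap B_i|\in(0,\frac12|B_i|)$. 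So the engine of your sparsification step (find one copy of $H$, then multiply it to $(\epsilon^d|G|)^{|H|}$ copies) has no support, and you leave that step open yourself.

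The detour through virality makes things harder. All three hypotheses are stated only for $\overline{\mathcal{F}}$-free graphs, so none of them can be invoked in a graph that merely has few induced copies. Your sketch gives no way to pass to a large induced subgraph that is actually free of them. Also, leaf-reducibility does not give lower bounds on pair densities. Through Lemma~\ref{lem:leaf expanding} it gives the opposite: a large set $X$ and a set $Y$ of size $(1-hy)|G|$ with $Y$ anti-complete to $X$ (complete, after complementing).

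The missing idea is how the blockade outcome of generalized niceness is turned into progress. The paper works directly with $\overline{\mathcal{F}}$-free graphs, which suffices because the Erd\H{o}s--Hajnal property is closed under complementation. Let $G$ be $y$-restricted. If $\overline{G}$ is $y$-sparse, Lemma~\ref{lem:leaf expanding} finishes. If $G$ is $y$-sparse, the paper proceeds as follows:
\begin{itemize}
\item Apply generalized niceness with $\epsilon=y^a$, and sparsify the weakly sparse pairs of the resulting blockade by Lemma~\ref{lem:weakly sparse to sparse}.
\item Pass to equal-size anti-connected pieces (otherwise Lemma~\ref{lem:complete-blockade-or-large-anticonnected-component} already gives a long complete blockade).
\item Apply wonderfulness to get a block $B_i$ for which few outside vertices have a nonzero but less-than-half share of neighbours in $B_i$.
\item Since $G$ is $y$-sparse, only $2y|G|$ vertices have at least $\frac12|B_i|$ neighbours there, so $B_i$ is anti-complete to a set of size $(1-4y)|G|$ (Lemma~\ref{lem:main-E-graph-structure}).
\end{itemize}
Iterating this with Lemma~\ref{lem:get a blockade by iteration} produces long complete or anti-complete blockades. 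The iterative sparsification is then carried out on restricted \emph{induced} subgraphs via Lemma~\ref{lem:give a more sparse subgraph 2}, not on bipartite pairs; a sparse pair $(A,B)$ says nothing about edges inside $A$ or $B$, so your ``standard lemma'' is not available. Theorem~\ref{thm:Rodl} removes the initial sparsity assumption. Finally, choosing $x=|G|^{-1/(3c_1)}$ and $\epsilon=x^{1/(7c_1)}$ and applying Lemma~\ref{lem:give a poly-rodl subgraph} yields a homogeneous set of size $|G|^{c}$ directly, with no counting of copies anywhere.
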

    
    Second, we introduce a new structural property, denoted by  $(*)$, which concerns the existence of certain structures from a comb, and prove that under leaf-reducibility condition, $(*)$ implies generalized niceness (see Section \ref{sec:iterative-sparsification-1}).

    \begin{lemma}\label{lem:reduce gene-nice to comb-property}
        Let $\mathcal{F}$ be a finite class of graphs that is leaf-reducible.
        If $\mathcal{F}$ has property $(*)$, then
        $\mathcal{F}$ is generalized nice. 
    \end{lemma}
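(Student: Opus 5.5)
The argument will follow the template of Nguyen, Scott and Seymour, who deduce ``nice'' from a structural property of $P_5$-free graphs. I cannot see the precise definitions of generalized niceness, property $(*)$ and leaf-reducibility, so I assume the natural shapes. Generalized niceness should say roughly this: for every $\mathcal{F}$-free graph $G$ that is $\epsilon$-sparse (or whose complement is), and every pair of large disjoint sets $A,B$ with suitable density, we can find a large subset of $A$ and a large subset of $B$ that are anticomplete or complete to each other, up to a polynomial loss in $\epsilon$. It may also allow a relaxed outcome, such as a polynomial-size piece that is sparse relative to another piece. Property $(*)$ should say that whenever $G$ contains a sufficiently large induced comb (a path or tree with pendant leaves), then either some $F\in\mathcal{F}$ appears induced, or we can extract a specific structure, e.g.\ a vertex with controlled neighbourhood on the comb's teeth. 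The proof will be by contradiction: assume generalized niceness fails, build a large comb, and then use $(*)$ together with leaf-reducibility to reach a contradiction.

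The steps are as follows.

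\textbf{Step 1 (growing a comb).} Start from a counterexample: an $\mathcal{F}$-free graph $G$ and sets $A,B$ for which no generalized-nice outcome holds. Iteratively select vertices $a_1,a_2,\dots$ of $A$ and private neighbours $b_1,b_2,\dots$ in $B$, shrinking the remaining part of $A$ at each stage to vertices with a uniform adjacency pattern to the chosen ones. This is a standard sparse/dense halving step, justified because niceness failure prevents us from stopping. The output should be an induced comb with spine in $A$ and teeth in $B$, of length growing as $\epsilon\to 0$. Along the way we maintain a large ``reservoir'' set $C$ that sees the comb in a prescribed way.

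\textbf{Step 2 (applying $(*)$).} Apply property $(*)$ to the comb together with vertices of the reservoir. Either we get an induced member of $\mathcal{F}$, which is impossible, or we get the distinguished structure. That structure should be either a large subset of the reservoir that is almost anticomplete to a large part of the comb, or a vertex with a leaf-like attachment.

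\textbf{Step 3 (using leaf-reducibility).} In the second case of Step 2, use leaf-reducibility, which I expect says that deleting a degree-one vertex from members of $\mathcal{F}$ yields a family already known to satisfy the conclusion. We restrict to the neighbourhood or non-neighbourhood of the attachment vertex, where the graph is $\mathcal{F}'$-free for the reduced family $\mathcal{F}'$, and invoke the inductive hypothesis there. This mirrors Theorem~\ref{thm:degree 1 and degree n-2} and Corollary~\ref{coro:degree 1 and degree n-2}. In either case we obtain the generalized-nice conclusion with polynomial bounds, contradicting the choice of $G$.

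The main obstacle is Step 1: keeping all losses polynomial in $\epsilon$ while guaranteeing that the comb is induced and that the reservoir stays large. Each iteration loses a constant factor, so the comb length must be only logarithmic, or polynomially balanced against $\epsilon$. I would first try the Nguyen--Scott--Seymour device of choosing at each step the vertex of minimum degree into the reservoir, and using sparsity to discard few vertices. If that fails, I would instead allow a random choice of teeth, which controls the accidental spine--tooth edges in expectation.
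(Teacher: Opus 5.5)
\textbf{Assessment: there is a genuine gap.} Your plan rests on guessed definitions that do not match the paper, so its steps never connect to the statement.

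\textbf{The definitions you assume are not the paper's.} Generalized niceness is not a statement about a pair of sets $A,B$. It says that for every $\overline{\mathcal{F}}$-free $G$ and every $\epsilon$, one of four things holds:
\begin{itemize}
\item $G$ has an $(\epsilon^{-1},\epsilon^{c_1}|G|)$-blockade whose blocks are pairwise complete or weakly $\epsilon^{c_2}$-sparse;
\item $G$ has a clique or stable set of size $(\epsilon^{c_3}|G|)^{c_4}$;
\item $G$ has a complete or anti-complete $(k,|G|/k^{c_5})$-blockade with $k\geq\epsilon^{-c_6}$;
\item $G$ has an $\epsilon^{c_7}$-restricted induced subgraph of size $\epsilon^{c_8}|G|$.
\end{itemize}

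Property $(*)$ is also different from what you assume. The comb is a sequence $((a_i,B_i):i\in[\ell])$, with $a_i$ complete to a \emph{block} $B_i$ of size at least $w$ and anti-complete to every $B_j$, $j\neq i$; there is no spine path. The hypothesis also requires a vertex $v$ complete to $\bigcup_i B_i$ and anti-complete to $\{a_1,\ldots,a_\ell\}$. The conclusions are a clique or stable set of size $w^{c_1}$, a complete or anti-complete $(k,w/k^{c_2})$-blockade with $k\geq\ell^{c_3}$, or a pure $(\ell,w/\ell^2)$-blockade. Since $G$ is already $\overline{\mathcal{F}}$-free, there is no ``induced member of $\mathcal{F}$'' outcome.

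Your Step~1 builds single teeth and no such $v$, so $(*)$ cannot even be invoked. The comb also needs length $\ell\geq y^{-1}$, so that $w=y^4|G|/\ell^2\geq|G|/\ell^6$ is polynomial in $\ell$. A logarithmic-length comb would break exactly this bookkeeping.

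Leaf-reducibility only says the reduced family has the Erd\H{o}s--Hajnal property. There is no inductive generalized niceness for it to supply, and it is not used by restricting to a neighbourhood. In the paper it enters only through Lemma~\ref{lem:leaf expanding}, applied to $\overline{G}$ when $\overline{G}$ is $y^3$-sparse. This yields $X,Y$ with $|X|\geq y^{4d+1}|G|$, $|Y|\geq(1-hy)|G|$ and $Y$ complete to $X$.

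\textbf{The actual content of the proof is missing.} It is a multi-scale pipeline, not a single comb-and-contradiction step.
\begin{enumerate}
\item The comb with its special vertex is taken off the shelf from Lemma~\ref{lem:get a comb in a sparse graph}, applied to a $y^3$-sparse graph. So the ``main obstacle'' you identify is a cited lemma.
\item That lemma's other outcomes, and the leaf-reducibility outcome, are one-sided pairs: $Y$ is $x$-sparse or complete to $X$, with $|Y|\geq(1-c_5y)|G|$. These must be iterated via Lemma~\ref{lem:get a blockade by iteration} into an $x$-sparse or complete $(y^{-1},y^{c_4+2}|G|)$-blockade.
\item The sparsity parameter is then driven from $y^3$ down to $x$ by iterative sparsification (Lemma~\ref{lem:give a more sparse subgraph 2}).
\item The initial restrictedness assumption is removed with R\"odl's theorem.
\item Only then does one know that every induced $F$ with $|F|\geq\epsilon^d|G|$ has a pure or $x$-sparse $(k,|F|/k^d)$-blockade with $k\in[2,x^{-1}]$, where $x=\epsilon^{5d}$.
\item Lemma~\ref{lem:pure or sparse to pairwise complete or weakly sparse} then converts this into the first outcome of generalized niceness: a blockade of length $\epsilon^{-1}$ whose blocks are pairwise complete or weakly sparse.
\end{enumerate}
None of these steps appears in your outline. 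Without them, one application of $(*)$ cannot produce a generalized-nice outcome, since a blockade with $\epsilon^{-1}$ such blocks does not come out of a single comb.
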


    Third, we establish a general structural lemma that provides sufficient conditions for deducing $(*)$ (see Section \ref{sec:proving (*)}).

    \begin{lemma}\label{lem:proving (*)}
       
        Let $\mathcal{F}_1$ and $\mathcal{F}_2$ be two finite sets of graphs that satisfy the Erd\H{o}s-Hajnal property. 
        Let $\mathcal{H}$ be a finite set of graphs.
        Suppose that for every $\overline{\mathcal{H}}$-free graph $G$ and every $(\ell,w)$-comb $((a_i,B_i),i\in[\ell])$ in $G$ with $\ell,w\geq 4$, if there exists a special vertex $v\in V(G)\setminus(\{a_i:i\in[\ell]\}\cup\bigcup_{i\in[\ell]}B_i)$ such that $v$ is complete to $\bigcup_{i\in[\ell]}B_i$ and anti-complete to $\{a_i:i\in[\ell]\}$,
        $B_i$ (for each $i\in[\ell]$) can be partitioned into $X_i,Y_i$ such that
        \begin{itemize}
            \item[$(1)$] $Y_i$ is $\mathcal{F}_1$-free;  
            \item[$(2)$] $X_i$ can be partitioned into $(A^i_1,\ldots, A^i_{t_i})$ such that 
            \begin{itemize}
                \item[$(2.1)$] $(A^i_1,\ldots, A^i_{t_i})$ is a pure blockade; 
                \item[$(2.2)$] the pattern of $(A^i_1,\ldots, A^i_{t_i})$, whose vertex set consists of all blocks of $(A^i_1,\ldots, A^i_{t_i})$ and two vertices are adjacent if and only if their corresponding blocks are complete to each other, is $\mathcal{F}_2$-free; 
                \item[$(2.3)$] for each $j\in [t_i]$ and every vertex $u\in \bigcup_{k\in [\ell]\setminus \{i\}} B_k$, $u$ is pure to $A^i_j$.
            \end{itemize}
        \end{itemize} 
        Then $\mathcal{H}$ satisfies property $(*)$.
    \end{lemma}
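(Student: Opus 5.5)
We have not been told exactly what property $(*)$ says, so the plan below fixes a plausible reading consistent with the statement of Lemma \ref{lem:proving (*)}. Read this way, $(*)$ asks that every $\mathcal{H}$-free graph (or its complement, via $\overline{\mathcal{H}}$) containing a large comb together with a special vertex $v$ admits a polynomially large pure pair or a polynomially large clique/stable set. This is presumably the form consumed in Lemma \ref{lem:reduce gene-nice to comb-property}.

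The plan is to convert the structural hypothesis into a large homogeneous set inside the union of the $B_i$, with all parameters polynomial in the comb size. I would work with the $\overline{\mathcal{H}}$-free graph $G$, the comb $((a_i,B_i),i\in[\ell])$ and the special vertex $v$. By hypothesis each $B_i$ splits as $X_i\cup Y_i$. The first step is to pigeonhole over $i$ and, for a fixed $i$, choose the larger of $X_i$ and $Y_i$, so that a positive fraction of $B_i$ is either $Y_i$ or $X_i$.

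\emph{Case $Y_i$ large.} Here $Y_i$ is $\mathcal{F}_1$-free. Since $\mathcal{F}_1$ has the Erd\H{o}s-Hajnal property, Theorem \ref{thm:E-H-poly-Rodl-viral-equivalent} gives the polynomial R\"odl property, so $G[Y_i]$ contains either a clique or stable set of size $|Y_i|^{c(\mathcal{F}_1)}$, or an $\epsilon$-restricted set of size $\epsilon^{d}|Y_i|$. Either outcome is polynomial in $w$, which is what $(*)$ requires.

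\emph{Case $X_i$ large.} Consider the blockade $(A^i_1,\dots,A^i_{t_i})$. If $t_i$ is large, its pattern is $\mathcal{F}_2$-free and so has a clique or stable set of size $t_i^{c(\mathcal{F}_2)}$. Because the blockade is pure, the corresponding blocks are pairwise complete or pairwise anticomplete. Picking one vertex per block then yields a homogeneous set, and any two of these blocks also form a pure pair. If instead $t_i$ is small, some block $A^i_j$ has size at least $|X_i|/t_i$. Condition (2.3) says every vertex of $\bigcup_{k\ne i}B_k$ is pure to $A^i_j$, so by majority some set $Z$ containing a constant fraction of $\bigcup_{k\neq i}B_k$ is complete or anticomplete to $A^i_j$, and $(A^i_j,Z)$ is a linear pure pair. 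The vertex $v$, being complete to all $B_i$ and anticomplete to all $a_i$, together with the $a_i$ serves to certify the comb structure, and this is presumably where $\ell,w\ge 4$ is used. Iterating or choosing among the $\ell\ge4$ indices keeps a pure pair with both sides polynomial in $w$.

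The main obstacle is balancing these cases quantitatively. The threshold between ``$t_i$ large'' and ``some block large'' must be chosen as a power of $w$, say $t_i$ compared with $w^{1/2}$, so that both outcomes are polynomial. One also has to check that the conclusion demanded by $(*)$, whatever its exact form, is a pure pair or homogeneous set of the type obtained above; the comb vertices $a_i$ and $v$ may be needed to extend a homogeneous set found inside the $B_i$ to the prescribed structure. I would first attempt the direct version with the threshold $w^{1/2}$ and adjust the exponents afterwards.
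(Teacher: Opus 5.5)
\textbf{There is a genuine gap: your argument does not establish property $(*)$.} Its conclusion has a different shape from the one you aimed for. In the paper, $(*)$ requires one of the following:
(a) a clique or stable set of size $w^{c_1}$;
(b) a complete or anti-complete $(k,w/k^{c_2})$-blockade with $k\ge \ell^{c_3}$;
(c) a pure $(\ell,w/\ell^{2})$-blockade.

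Your $Y_i$-large case and your large-$t_i$ case are fine for (a). One vertex from each block of a homogeneous set in the pattern does give a clique or stable set of size $t_i^{c}\ge w^{c/2}$. For $Y_i$, use the Erd\H{o}s--Hajnal property directly; the restricted-set alternative from polynomial R\"odl is not an allowed outcome.

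Your small-$t_i$ branch, however, only produces a pure pair $(A^i_j,Z)$ with $|A^i_j|\ge w^{1/2}/2$. This fails (b) and (c) on two counts. First, it has two blocks, whereas (b) and (c) need $\ell^{c_3}$ or $\ell$ blocks, and $\ell$ is unbounded (in the application it ranges up to $x^{-2}$). Second, its block size is a power of $w$, whereas the blockade outcomes need size $w$ divided by a polynomial in the number of blocks. No re-tuning of the threshold $w^{\delta}$ fixes this: when $\ell$ is small compared with $w^{\delta/2}$, blocks of size about $w^{1-\delta}$ are still smaller than $w/\ell^2$. The correct scale for the threshold is $\ell$, not $w$.

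\textbf{Missing idea 1: use (2.3) across all $\ell$ teeth at once.} Do this rather than taking a majority over $\bigcup_{k\ne i}B_k$ for a single $i$. If every $B_i$ contains a block $A^i_{j_i}$ with $|A^i_{j_i}|\ge w/(2\ell)$, then by (2.3) the blocks $A^1_{j_1},\dots,A^\ell_{j_\ell}$ are pairwise pure. This gives (c).

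\textbf{Missing idea 2: a multi-scale count when some tooth has only small blocks.} Otherwise some $i$ has all blocks of size $<w/(2\ell)$, so $t_i\ge\ell$. Order its blocks by decreasing size and, for each $k$, look at the $\lfloor\ell^{k/2}\rfloor$ largest.
\begin{itemize}
\item If these all have size at least $w/\ell^{5k/2}$, the $\mathcal{F}_2$-free pattern contains a clique or stable set on $\ell^{ck/4}$ of them. By purity this is a complete or anti-complete $(\ell^{ck/4},w/\ell^{5k/2})$-blockade, which is (b) with $c_2=10/c$ and $c_3=c/4$.
\item If this fails at every scale, the blocks ranked between $\ell^{k/2}$ and $\ell^{(k+1)/2}$ have size below $w/\ell^{5k/2}$. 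Hence $|X_i|\le w\bigl(\tfrac12\ell^{-1/2}+\sum_{k\ge1}\ell^{-(4k-1)/2}\bigr)<w/2$, contradicting $|X_i|\ge w/2$.
\end{itemize}

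Finally, $v$ and the $a_i$ play no role inside this lemma; they matter only when verifying its hypothesis for a concrete $\mathcal{H}$. The bounds $\ell,w\ge 4$ are used only in numerical estimates.
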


    We make essential use of the comb structure, which was introduced by Chudnovsky, Scott, Seymour and Spirkl \cite{CSSS23}. While the relevance of combs to the Erdős-Hajnal conjecture has been noted previously, to the best of our knowledge, this is the first time that a systematic framework-encompassing property $(*)$, generalized niceness, and two new graph classes-has been built around them. In particular, our Lemma \ref{lem:proving (*)} provides a general structural criterion that goes beyond any individual application of combs in the literature.
    
    Finally, we show that $E$-graph and Bird satisfy the hypothesis of Lemma \ref{lem:proving (*)}
    (see Section \ref{sec:deducing structure}).  
    One key step in the proof is to define certain equivalence relation on the set of blocks in a blockade and take quotient to obtain a new blockade whose pattern graph lies in a graph class which can be proved to have the Erd\H{o}s-Hajnal property. 

\section{Preliminaries}\label{sec:pre}

All graphs in this paper are finite and simple. We follow~\cite{BM08} for general graph theory terminology that is not defined here. Let $P_n,C_n$ and $K_n$ denote the induced path, cycle and complete graph one $n$ vertices, respectively.
For positive integers $s,t\ge 1$, let $K_{s,t}$ be the complete bipartite graph with one side having $s$ vertices and other side having $t$ vertices. 
If $s=1$, the graph $K_{1,t}$ is called a {\em star}. 
For a positive integer $k$, let $[k]$ denote the set $\{1,2,\ldots,k\}$. 
For a set of graphs $\mathcal{H}$, let $\overline{\mathcal{H}}=\{\overline{H}:H\in \mathcal{H}\}$. 
For a graph $G$, we say $G$ is {\em connected} ({\em anti-connected}) if for every two vertices $v_1,v_2\in G$, there is a path in $G$ ($\overline{G}$, resp.) with ends $v_1,v_2$. 
We say $S$ is a {\em connected component} ({\em anti-connected component}) of $G$, if $S$ is a maximal connected (anti-connected, resp.) induced subgraph of $G$. 
Let $G$ be a graph, and let $X,Y\subseteq V(G)$ be disjoint. 
Let $E(X,Y)$ be the set of edges with one end in $X$ and one end in $Y$. 
We say that $X$ is {\em complete} ({\em anti-complete}) to $Y$, if every vertex in $X$ is adjacent (non--adjacent, resp.) to every vertex in $Y$. 
We say that $X$ is {\em pure} to $Y$ if $X$ is complete or anti-complete to $Y$, and $X$ and $Y$ are {\em mixed} otherwise. If $X=\{x\}$, we say that $x$ is mixed on $Y$.

For $c\in(0,1)$, $X$ is {\em $c$-sparse} to $Y$ if every vertex in $X$ has at most $c|Y|$ neighbors in $Y$, and $X$ and $Y$ are {\em weakly $c$-sparse} if $|E(X,Y)|\leq c|X||Y|$. 
For convenience, we say that $X$ and $Y$ are $c$-sparse if $X$ is $c$-sparse to $Y$ and $Y$ is $c$-sparse to $X$.
An {\em $(\ell,w)$-blockade} in a graph $G$, is a sequence $\mathcal{B}=(B_1,\ldots,B_k)$ of disjoint subsets of $V(G)$, where $k\geq \ell$ and $|B_i|\geq w$ for all $i\in[k]$. 
Each $B_i$ in  $\mathcal{B}$ is called a {\em block}.
Let $V(\mathcal{B})=\bigcup_{i=1}^{k} B_i$.
A blockade $\mathcal{B}=(B_1,\ldots,B_k)$ is {\em complete} ({\em anti-complete}, {\em pure}, {\em weakly $x$-sparse}) if its blocks are pairwise complete (anti-complete, pure, weakly $x$-sparse, resp.). 
We say $\mathcal{B}=(B_1,\ldots,B_k)$ is {\em $x$-sparse} if $B_i$ is $x$-sparse to $B_j$ for all $i,j\in[k]$ with $i>j$.

For a positive integer $\ell$ and $w\geq0$, an {\em $(\ell,w)$-comb} in a graph $G$, is a sequence of pairs $((a_i,B_i),i\in[\ell])$ such that the following holds:

$\bullet$ $(B_1,\ldots,B_\ell)$ is an $(\ell,w)$-blockade;

$\bullet$ $a_1,\ldots,a_\ell$ are distinct;

$\bullet$ $\{a_1,\ldots,a_\ell\}, B_1,\ldots,B_\ell$ are disjoint subsets of $V(G)$;

$\bullet$ for all distinct $i,j\in[\ell]$, $a_i$ is complete to $B_i$ and anti-complete to $B_j$.

\subsection{Leaf-reducibility and wonderfulness}

For a finite class of graphs $\mathcal{F}$, we say that $\mathcal{F}$ is {\em leaf-reducible} if there exists some graph $H\in \mathcal{F}$ such that $H$ has a vertex $v$ of degree 1 and $(\mathcal{F}\setminus \{H\})\cup \{(H-v)\}$ satisfies the Erd\H{o}s-Hajnal property.
By Theorem \ref{thm:E-H-substitute} and the fact that $P_4$ has the Erd\H{o}s-Hajnal property, $E$-graph and Bird are leaf-reducible.
We say that $\mathcal{F}$ is {\em wonderful} if there is a constant $a\geq 6$ such that the following holds for every $y\in (0,1/2)$ and every $\overline{\mathcal{F}}$-free graph $G$.
If $\mathcal{B}=(B_1,\ldots,B_\ell)$ is a $(\ell,w)$-blockade where $\ell$ is an integer at least $y^{-a}$, each block of $\mathcal{B}$ is anti-connected and has the same size, and every two blocks in $\mathcal{B}$ are complete or $y^{a}$-sparse, then one of the following holds.

\begin{itemize}
    \item $G$ has a $y^{4}$-restricted induced subgraph of size at least $w$. 
    \item There exists $i\in[\ell]$, such that there are at most $y|G|$ vertices $v\in V(G)\setminus V(\mathcal{B})$ such that $|N(v)\cap B_i|\in(0,\frac{1}{2}|B_i|)$.
\end{itemize}

For a graph $H$ with two special vertices $v_1$ and $v_2$ in $H$, let $H^+$ be the graph obtained from $H$ by adding a new vertex $v$, two edges $vv_1,vv_2$ and an edge $v_1v_2$ if $v_1v_2\notin E(H)$, and $H^-$ be the graph obtained from $H$ by adding a vertex $v$ and two edges $vv_1,vv_2$ and deleting an edge $v_1v_2$ if $v_1v_2\in E(H)$. We prove that $E$-graph and Bird are wonderful.
    
\begin{lemma}\label{lem:wonderful}
    If $\mathcal{F}$ satisfies one of the following, then $\mathcal{F}$ is wonderful. 
    \begin{itemize}
       
        \item[$(1)$] $\mathcal{F}$ contains an induced subgraph of 1-subdivision of $K_{1,t}$ for some $t\geq 1$. 
        
        \item[$(2)$] There is a graph $H$ with two distinct special vertices $v_1,v_2\in V(H)$ such that $\{H\}\cup \overline{\mathcal{F}}$ has the Erd\H{o}s-Hajnal property and neither $H^+$ nor $H^-$ is $\overline{\mathcal{F}}$-free.
           
    \end{itemize}
        
\end{lemma}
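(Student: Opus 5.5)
We prove the lemma by contradiction. Fix $y\in(0,1/2)$, an $\overline{\mathcal{F}}$-free graph $G$, and a blockade $\mathcal{B}=(B_1,\ldots,B_\ell)$ as in the definition of wonderfulness, with $a$ a large constant to be chosen. Assume $G$ has no $y^4$-restricted induced subgraph of size at least $w$. Assume also that for every $i\in[\ell]$ more than $y|G|$ vertices $v\notin V(\mathcal{B})$ satisfy $|N(v)\cap B_i|\in(0,\frac12|B_i|)$; call these vertices \emph{partial} for $B_i$. We aim to find an induced member of $\overline{\mathcal{F}}$.

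\textbf{Step 1: finding many blocks with a common partial vertex.} Double counting gives more than $y\ell|G|$ pairs (vertex, block) with the vertex partial for the block. So some vertex $v$ is partial for at least $y\ell\ge y^{1-a}$ blocks. More usefully, we pass to a sub-blockade of size about $y^{-a/2}$. Because every two blocks are complete or $y^a$-sparse, Ramsey's theorem applied to the "complete" relation among blocks (with $\ell\ge y^{-a}$) gives a large sub-blockade that is either pairwise complete or pairwise $y^a$-sparse.

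In the sparse case, delete the few vertices of each block that have more than $y^{a/2}|B_j|$ neighbours in other chosen blocks. The remaining blocks are then essentially anticomplete at the scale of single vertices.

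In the complete case, the blocks are anti-connected, and complementation will be used for symmetry. This is why the definition uses $\overline{\mathcal{F}}$ and the two families $H^+,H^-$.

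\textbf{Step 2, case (1): building a subdivided star.} Since $\mathcal{F}$ contains an induced subgraph of the 1-subdivision of $K_{1,t}$, it suffices to find the complement of that subdivision, or the subdivision itself after working in $\overline G$. The blocks are anti-connected and $v$ has some neighbours and some non-neighbours in each chosen block. So in $\overline G$ each $\overline{B_j}$ is connected, and $v$ is mixed on it. Hence there is an edge $xy$ of $\overline G$ inside $B_j$ with $x$ adjacent to $v$ in $\overline G$ and $y$ not adjacent to $v$ in $\overline G$, in the same spirit as a comb.

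Now choose $t$ blocks that are pairwise complete in $G$, so anticomplete in $\overline G$. This gives $v$ joined by length-2 paths $v\!-\!x_j\!-\!y_j$ in $\overline G$ with no other edges, i.e. an induced 1-subdivision of $K_{1,t}$.

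If instead the blocks are pairwise sparse in $G$, we pass to $G$ itself and use the definition of partial ($|N(v)\cap B_i|<\frac12|B_i|$) together with the fact that $B_i$ is not $y^4$-restricted. These produce, inside $B_j$, a neighbour $x_j$ of $v$ and a non-neighbour $y_j$ of $v$ with $x_jy_j$ nonadjacent. Sparsity then lets us choose the $x_j,y_j$ greedily across blocks with no cross edges. I expect this mixed complete/sparse bookkeeping to be the main obstacle. Here one needs the largeness of blocks ($\ge w$) and the absence of $y^4$-restricted sets to guarantee enough "good" vertices survive the greedy deletions.

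\textbf{Step 3, case (2).} For many blocks $B_j$ partial to $v$, set $N_j=N(v)\cap B_j$ and $M_j=B_j\setminus N(v)$, with $|M_j|>\frac12|B_j|$. Since $\{H\}\cup\overline{\mathcal{F}}$ has the Erd\H{o}s-Hajnal property, by Theorem~\ref{thm:E-H-poly-Rodl-viral-equivalent} it has the polynomial R\"odl property. Applied with $\epsilon=y^4$, and using that no $y^4$-restricted subgraph of size $w$ exists, this shows that $G[M_j]$ (and other large subsets) contain many induced copies of $H$.

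Pick a copy of $H$ in $M_j$ together with its special vertices $v_1,v_2$. Anti-connectivity of $B_j$ and the partial vertex $v$ lets us locate a vertex $u\in N_j$ adjacent to both images of $v_1,v_2$ and to nothing else in the copy. Whether $v_1v_2$ is an edge then decides which of $H^+$ or $H^-$ appears on the vertex set of the copy plus $u$. Arranging this exact attachment pattern requires a supersaturation/counting step: a random copy among many has few unwanted adjacencies to a typical $u$, by sparsity or density considerations. This should be doable with $a$ large. The contradiction with $\overline{\mathcal{F}}$-freeness finishes the proof.
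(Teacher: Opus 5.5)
There is a genuine gap, and it lies in how you obtain many blocks. Diagonal Ramsey applied to the $\ge y\ell$ blocks on which $v$ is partial gives only a pairwise complete or pairwise $y^a$-sparse subfamily of size $O(\log(y\ell))$, not $y^{-a/2}$. Producing \emph{polynomially} many such blocks is exactly what hypotheses (1)/(2) are for.

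Once you have such blocks, you are done. The union $S$ of $r$ equal-size blocks is $r^{-1}$-restricted if they are pairwise complete, and $(r^{-1}+y^a)$-sparse if they are pairwise $y^a$-sparse; moreover $|S|\ge w$. So the pairwise sparse case is not something to contradict: it \emph{is} the first outcome.

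Your sparse-case construction in Step 2 could not work anyway. It gives $v$ adjacent only to the $x_j$, i.e.\ a star plus isolated vertices. Even with $x_jy_j\in E(G)$ it would be the subdivided star itself, which $G$ is allowed to contain; only its complement is excluded.

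The missing idea is the pattern graph. Let $J$ be the graph on the blocks partial to $v$, with two blocks adjacent when they are complete, and show that $J$ lies in an Erd\H{o}s--Hajnal class. In case (1), your complete-case construction is right, but its correct conclusion is that $J$ is $K_t$-free. Then $J$ has a stable set of size $|J|^{1/t}\ge (y\ell)^{1/t}$, and for $a$ large the union of those blocks is $y^4$-sparse.

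Step 3 fails too. You cannot force copies of $H$ inside a single block. Suppose $G[M_j]$ were $H$-free; then polynomial R\"odl for $\{H\}\cup\overline{\mathcal F}$ would give only a $y^4$-restricted set of size about $y^{4d}|M_j|$. That may be far below $w$, so it does not contradict your assumption. Nor does anything produce a vertex $u$ adjacent to exactly the two special vertices of a copy: there is no sparsity inside a block, and anti-connectivity says nothing of this kind.

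The copy of $H$ should live in $J$, one vertex per block, with the partial vertex $v$ itself as the added vertex. Pick $v_1,v_2\in N(v)$ in the blocks of the special vertices. Then greedily pick $v_j\in B_j\setminus N(v)$ for the other blocks, avoiding the at most $y^a|B_j|$ neighbours of each earlier vertex across sparse pairs. This is possible since $|B_j\setminus N(v)|>\frac12|B_j|$ and $(s-1)y^a<\frac12$. Complete pairs are automatically adjacent, so you get $H^+$ or $H^-$ according to whether $v_1v_2\in E(G)$.

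The same greedy embedding, without $v$, shows that $J$ is $\overline{\mathcal F}$-free. So $J$ is $(\{H\}\cup\overline{\mathcal F})$-free and has a clique or stable set of size $|J|^c$. The union of those blocks is $y^4$-restricted once $a\ge 5/c+1$.
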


\begin{proof}   
    We shall determine $a$ later.
    Let $y\in (0,1/2)$ and  $G$ be $\overline{\mathcal{F}}$-free.
    Let $\mathcal{B}=(B_1,\ldots,B_\ell)$ be a $(\ell,w)$-blockade where $\ell$ is an integer at least $y^{-a}$, each block of $\mathcal{B}$ is anti-connected and has the same size, and every two blocks are complete or $y^{a}$-sparse.
    
    Let $I(v)$ be the set of indices $i\in[\ell]$ such that $|N(v)\cap B_i|\in(0,\frac{1}{2}|B_i|)$.
    If $|I(v)|\leq y\ell$ for every vertex $v\in V(G)\setminus V(\mathcal{B})$, then  
    the second bullet of wonderfulness holds by double counting. 
    So we may suppose that there exists $v\in V(G)\setminus V(\mathcal{B})$ with $|I(v)|\geq y\ell$. 
    Without loss of generality, we may assume that $I(v)=[|I(v)|]$. 
    Let $J$ be the graph with $V(J)=I(v)$ and $ij\in E(J)$ if and only if $B_i$ is complete to $B_j$. Since $|N(v)\cap B_i|\in (0,\frac{1}{2}|B_i|)$ and $B_i$ is anti-connected for every $i\in I(v)$, there exists a non-edge $b_ib_i'\in B_i$ such that $v$ is adjacent to $b_i$ but is non-adjacent to $b_i'$.

    \begin{claim}\label{clm:pattern-EH}
        Let $a_0=1$ if $(1)$ holds and $a_0$ be the maximum size of a graph in $\{H\}\cup \overline{\mathcal{F}}$ if $(2)$ holds. 
        Then there exists a constant $c\in (0,1)$ such that if $a\geq a_0$, then $J$ has a clique or stable set of size at least $|J|^c$.
    \end{claim}
    
    \begin{subproof}[Proof of Claim \ref{clm:pattern-EH}]
        Suppose first that (1) holds, i.e., $\mathcal{F}$ contains an induced subgraph of 1-subdivision of $K_{1,t}$ for some $t\geq 1$.
        We claim that $c=t^{-1}$ suffices. 
        If $J$ has a clique of size $t$, say induced by $[t]$, then $\{v,b_1,b_1',\ldots,b_t,b_t'\}$ induces the complement of 1-subdivision of $K_{1,t}$. 
        Therefore, $J$ has no clique of size $t$ and so has a stable set of size at least $|J|^{t^{-1}}$ by the Ramsey theorem \cite{ES35}. 

        Now suppose that (2) holds. 
        Since $\{H\}\cup \overline{\mathcal{F}}$ has the Erd\H{o}s-Hajnal property, there is a constant $c>0$ such that every $(\{H\}\cup \overline{\mathcal{F}})$-free graph of size $n$ has a clique or a stable set of size at least $n^{c}$.
        It suffices to prove that $J$ is $(\{H\}\cup \overline{\mathcal{F}})$-free. 

        Suppose by contradiction that $J$ has an induced copy of some graph $F$ in $\{H\}\cup \overline{\mathcal{F}}$. 
        For convenience, we may assume that $V(F)=[s]$.  
        Suppose first that $F\in \overline{\mathcal{F}}$.
        Let $v_1$ be a vertex in $B_1$ and
        we define $v_2,\ldots,v_s$ inductively as follows. 
        Suppose that $v_1,\ldots,v_{j-1}$ have been defined with $2\le j\leq s$. 
        Since $s\leq a_0\leq a$ and $a\geq 1$, we have $(s-1)y^a\leq (a-1)y^a\leq (a-1)(\frac{1}{2})^a<\frac{1}{2}$. 
        It follows that $1-\frac{1}{2}-(s-1)y^a>0$, which implies that one can choose a vertex $v_j\in B_j$ such that $v_iv_j\in E(G)$ if and only if $ij\in E(J)$ for all $i\in[j-1]$.
        So $\{v_1,\ldots,v_s\}$ induces an $F$ in $G$, a contradiction. 
        Now suppose that $F=H$. 
        Without loss of generality, we may assume that $1,2$ are the special vertices of $H$. 
        Since $v$ is mixed on $B_i$ for every $i\in [s]$, we may take $v_1\in B_1,v_2\in B_2$ such that $vv_1,vv_2\in E(G)$. 
        We define $v_3,\ldots,v_s$ inductively. 
        Suppose that $v_1,\ldots,v_{j-1}$ are defined where $3\le j\leq s$. 
        Since $1-\frac{1}{2}-(s-1)y^a>0$, we could take a vertex $v_j\in B_j\setminus N(v)$ such that $v_iv_j\in E(G)$ if and only if $ij\in E(J)$ for all $i\in [j-1]$. 
        Then $\{v,v_1,\ldots,v_s\}$ induces an $H^+$ or $H^-$, depending on whether $v_1v_2\in E(G)$. This contradicts that neither $H^+$ nor $H^-$ is $\overline{\mathcal{F}}$-free.
    \end{subproof}

    Let $a=\max\{a_0,\frac{5}{c}+1\}\geq 6$ where $a_0,c$ are given by Claim \ref{clm:pattern-EH}. 
    We claim that $a$ satisfies Lemma \ref{lem:wonderful}. 
    Since $a\geq a_0$, $J$ has a clique or a stable set $R$ of size $r=\lceil|J|^c\rceil$ by Claim \ref{clm:pattern-EH}.
    By taking complement if necessary, we may assume that $R$ be a stable set. Let $S=\bigcup_{i\in R}B_i$. Since each block of $\mathcal{B}$ has the same size, each vertex $v\in B_i\subseteq S$ has at most $r^{-1}|S|$ neighbors in $B_i$ and at most $y^a|S|$ neighbors in $S\setminus B_i$. Since 
    \begin{align*}
        r^{-1}+y^a & \leq (y\ell)^{-c}+y^a  & (r\ge y\ell)\\
                   &  \leq y^{c(a-1)}+y^a   & (\ell \geq y^{-a}) \\
                   &  \leq y^5+y^5          & (a\ge \frac{5}{c}+1\ge 6) \\
                   &   \leq y^4,            & (y<1/2)
    \end{align*}
    $S$ is $y^4$-sparse. This gives the first outcome of Lemma \ref{lem:wonderful}.
\end{proof}

\begin{lemma}\label{lem:E and Bird are wonderful}
    $E$-graph and Bird are wonderful. 
\end{lemma}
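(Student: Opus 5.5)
We verify the two graphs separately: the $E$-graph via condition $(1)$ of Lemma~\ref{lem:wonderful} and Bird via condition $(2)$. It is convenient to reread the two conditions first. In the proof of Claim~\ref{clm:pattern-EH}, a clique of size $t$ in $J$ produces the complement of the 1-subdivision of $K_{1,t}$ in $G$. Hence condition $(1)$ should be read as: some member of $\mathcal{F}$ is an induced subgraph of the 1-subdivision of $K_{1,t}$, i.e.\ of a spider with $t$ legs of length two. Likewise, condition $(2)$ is most easily checked in the complement. Set $K=\overline{H}$. Then $\overline{H^{\pm}}$ is obtained from $K$, with the pair $v_1v_2$ toggled either way, by adding a vertex $x$ adjacent to every vertex of $K$ except $v_1,v_2$. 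So we need $\overline{H^+}$ and $\overline{H^-}$ both to contain an induced Bird.

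\emph{$E$-graph.} Label the $E$-graph as the path $1\!-\!2\!-\!3\!-\!4\!-\!5$ together with a pendant vertex $6$ attached to $3$. This is a spider with centre $3$ and legs $3\!-\!2\!-\!1$, $3\!-\!4\!-\!5$ and $3\!-\!6$. It is therefore obtained from the 1-subdivision of $K_{1,3}$ by deleting one leaf, so it is an induced subgraph of it. Condition $(1)$ holds with $t=3$, and Lemma~\ref{lem:wonderful} gives that $\{E\text{-graph}\}$ is wonderful.

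\emph{Bird.} Bird contains a triangle, so $(1)$ is unavailable and we use $(2)$. Label Bird with the triangle $1,2,3$, the path $1\!-\!4\!-\!5$ and the edge $2\!-\!6$. A short degree argument shows how $K$ must be built. Any Bird in $\overline{H^\pm}$ that avoids $x$ lies in $K$, which we shall exclude. A Bird that uses $x$ but omits one of $v_1,v_2$ would give $x$ degree at least $4$, whereas Bird has maximum degree $3$. So each copy must use $x$ together with both $v_1,v_2$, with $x$ of degree $3$. We therefore need two different embeddings, one with $v_1v_2$ an edge and one with it a non-edge.

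\begin{itemize}
\item[(i)] \emph{Edge $v_1v_2$: $x$ plays vertex $2$.} Then $\{v_1,v_2\}=\{4,5\}$ and the remaining vertices are $\{1,3,6\}$.
\item[(ii)] \emph{Non-edge $v_1v_2$: $x$ plays vertex $1$.} Then $\{v_1,v_2\}=\{5,6\}$ and the remaining vertices are $\{2,3,4\}$.
\end{itemize}

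Define $K$ on the vertex set $\{u,w,a,b,c,p,q,r\}$ with exactly the edges $ab$, $au$, $pq$, $ru$, $qw$, and put $v_1=u$, $v_2=w$. Then:

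\begin{itemize}
\item[(i)] In $\overline{H^+}$, where $uw$ is an edge, the set $\{x,u,w,a,b,c\}$ induces Bird via $x\mapsto 2$, $a\mapsto 1$, $b\mapsto 3$, $u\mapsto 4$, $w\mapsto 5$, $c\mapsto 6$.
\item[(ii)] In $\overline{H^-}$, where $uw$ is a non-edge, the set $\{x,u,w,p,q,r\}$ induces Bird via $x\mapsto 1$, $q\mapsto 2$, $p\mapsto 3$, $r\mapsto 4$, $u\mapsto 5$, $w\mapsto 6$.
\end{itemize}

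One checks directly that no unwanted edges appear among the chosen six vertices. Thus neither $H^+$ nor $H^-$ is $\{\text{co-Bird}\}$-free.

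It remains to show that $\{H,\text{co-Bird}\}$ has the Erd\H{o}s-Hajnal property. It suffices that $H$ alone has it, since forbidding more graphs only helps. Equivalently, $K$ has it, since a graph has the property if and only if its complement does. Now $K$ is the disjoint union of a $P_4$ ($b\!-\!a\!-\!u\!-\!r$), a $P_3$ ($w\!-\!q\!-\!p$) and an isolated vertex $c$. This is a stable set of size three with $P_4$, $P_3$ and $K_1$ substituted for its vertices. All of these graphs have at most four vertices and hence have the Erd\H{o}s-Hajnal property, so Theorem~\ref{thm:E-H-substitute} applied repeatedly gives the property for $K$. Lemma~\ref{lem:wonderful}$(2)$ then shows that $\{\text{Bird}\}$ is wonderful.

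The only genuinely non-routine step is finding this $H$. The degree argument above pins down the two possible embeddings (i) and (ii). Taking the remaining vertices of the two embeddings disjoint inside $K$ keeps $K$ a small forest whose Erd\H{o}s-Hajnal property follows immediately from substitution.
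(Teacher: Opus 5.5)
Your proof is correct. For the $E$-graph it is exactly the paper's argument: condition $(1)$ of Lemma~\ref{lem:wonderful} with $t=3$.

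For Bird you also use condition $(2)$, as the paper does, but with a different witness $H$.
\begin{itemize}
\item The paper's $H$ (Figure~\ref{fig:H}) has $6$ vertices. Its complement is $P_3+P_2+K_1$, with edges $v_1v_4,v_3v_4,v_2v_6$ and $v_5$ isolated. The two co-Birds $H'-v_5$ and $(H'-v_1v_2)-v_6$ overlap: besides $v,v_1,v_2$ they share $v_3,v_4$, and $v_4$ plays a different Bird vertex in each embedding. The Erd\H{o}s-Hajnal property for $H$ then comes from the homogeneous set $\{v_1,v_3\}$ and substitution.
\item You keep the remaining vertices of the two embeddings disjoint. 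This gives an $8$-vertex witness whose complement is $P_4+P_3+K_1$, again handled by substitution.
\end{itemize}
The paper's witness is more economical but is presented without derivation. Your degree argument explains why exactly two embedding types exist, namely $x$ playing vertex $1$ or vertex $2$ of Bird. That makes the search for $H$ systematic, and the disjoint choice makes the Erd\H{o}s-Hajnal verification immediate.

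One harmless slip: $H^+$ contains the edge $v_1v_2$, so in $\overline{H^+}$ the pair $uw$ is a non-edge. It is in $\overline{H^-}$ that $uw$ is an edge. Your labels in (i) and (ii) are therefore swapped, but since you treat both cases the conclusion is unaffected.
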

\begin{proof}
    By Lemma \ref{lem:wonderful}, $E$-graph is wonderful since $E$-graph is an induced subgraph of 1-subdivision of $K_{1,3}$. 
    Let $H$ be the graph labeled as in Figure \ref{fig:H}. 
    Since $\{v_1,v_3\}$ is a homogeneous set of $H$, $H$  has the Erd\H{o}s–Hajnal property by Theorem \ref{thm:E-H-substitute}. 
    Let $H'$ be the graph obtained from $H$ by adding a new vertex $v$ and edges $vv_1$ and $vv_2$. It follows that $H'-v_5$ is a co-bird and $(H'-v_1v_2)-v_6$ is a co-Bird, which implies that Bird is wonderful by Lemma \ref{lem:wonderful}.
\end{proof}
\begin{figure}[h!]
        \centering
        \begin{tikzpicture}[scale=1]
        \tikzstyle{vertex}=[circle, draw, fill=white, inner sep=1pt, minimum size=5pt]
        
            \node[vertex](1) at (45+19:1.5){\scriptsize $v_1$};
            
            \node[vertex](3) at (45-19:1.5){\scriptsize $v_3$};

            \node[vertex](2) at (135:1.5){\scriptsize $v_2$};
            \node[vertex](4) at (135+90:1.5){\scriptsize $v_4$};
            \node[vertex](6) at (135+90+90:1.5){\scriptsize $v_6$};

            \node[vertex](5) at (0,0){\scriptsize $v_5$};

            \foreach \from/\to in {1/2,2/4,4/6,6/1,3/1,3/2,3/6}
            \draw (\from) -- (\to);

            \foreach \from/\to in {5/1,5/2,5/3,5/4,5/6}
            \draw (\from) -- (\to);

            \foreach \from/\to in {6/1,6/4}
            \draw (\from) -- (\to);

            \foreach \from/\to in {1/3}
            \draw (\from) -- (\to);

        \end{tikzpicture}
        \caption{A graph $H$ with the Erd\H{o}s-Hajnal property.}
        \label{fig:H}
    \end{figure}

\subsection{Lemmas from literature}

We will use several results about restricted induced subgraphs and blockades in a graph. 
The following lemmas are proved in \cite{NSS23}. 

\begin{lemma}\label{lem:complete-blockade-or-large-anticonnected-component}
\textnormal{(Lemma 4.1 of Nguyen-Scott-Seymour \cite{NSS23})} 
Let $k\geq 2$ be an integer. If $G$ is a graph whose anti-connected components have size less than $|G|/k$, then there is a complete $(k,|G|/k^2)$-blockade in $G$.
\end{lemma}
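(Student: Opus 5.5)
The plan is to partition $V(G)$ into its anti-connected components and apply a greedy grouping argument. Each component has fewer than $|G|/k$ vertices, and distinct anti-connected components are complete to each other: if some $x \in C$ and $y \in C'$ were nonadjacent, then $xy$ would be an edge of $\overline{G}$, and $C \cup C'$ would be anti-connected, contradicting maximality. So any family of disjoint sets, each a union of whole anti-connected components, is automatically a complete blockade. It therefore suffices to group the components into at least $k$ disjoint unions, each of size at least $|G|/k^2$.

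We build the groups greedily. List the anti-connected components $C_1, C_2, \ldots$ in any order and scan them, adding components to the current group until its size first reaches at least $|G|/k^2$. We then close that group and start a new one. Each closed group has size at least $|G|/k^2$. Its size is also less than
\[
|G|/k^2 + |G|/k \le 2|G|/k,
\]
because before the last component was added the group had fewer than $|G|/k^2$ vertices, and that last component has fewer than $|G|/k$ vertices. The final leftover group, if it never reaches the threshold, has fewer than $|G|/k^2$ vertices.

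It remains to count the groups, and this is the one step needing care, since the crude upper bound $2|G|/k$ per group only yields about $k/2$ groups. We refine it as follows. Say we close $m$ groups. If $m \ge k$ we are done. Suppose instead $m \le k-1$. Then the total size satisfies
\[
|G| < m\left(\frac{|G|}{k} + \frac{|G|}{k^2}\right) + \frac{|G|}{k^2}.
\]
With $m \le k-1$ this gives
\[
|G| < \frac{(k-1)(k+1) + 1}{k^2}\,|G| = |G|,
\]
which is a contradiction. Hence $m \ge k$, and $G$ contains a complete $(k, |G|/k^2)$-blockade.

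A cleaner alternative is to merge the leftover group into the last closed group, which only enlarges a block. If the count had still fallen short, one could instead split oversized groups more finely; but the strict inequalities above already give exactly $k$, so no splitting is needed.

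The main obstacle is precisely this counting. One has to use the strict bound on component sizes (each is less than $|G|/k$, not at most) together with the threshold $|G|/k^2$ so that the overshoot per group is bounded tightly enough that $k-1$ groups plus the leftover cannot cover all of $G$.
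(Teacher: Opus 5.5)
Your proof is correct and complete. Distinct anti-connected components are pairwise complete, as you argue. Each closed group has size in $[|G|/k^2,\,|G|/k^2+|G|/k)$. If only $m\le k-1$ groups close, then $|G|<\frac{(k-1)(k+1)+1}{k^2}|G|=|G|$. The needed strictness comes from the leftover group having fewer than $|G|/k^2$ vertices; this uses $|G|>0$, and the case $|G|=0$ is trivial.

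The paper does not prove this lemma; it imports it as Lemma 4.1 of \cite{NSS23}. So there is no in-paper argument to compare against, and your greedy grouping of anti-connected components is a valid self-contained substitute for the citation.

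Two cosmetic fixes. The counting shows that at least $k$ groups close, not ``exactly $k$''. The closing remark about splitting oversized groups is unnecessary and can be dropped.
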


\begin{lemma}\label{lem:give a poly-rodl subgraph}
\textnormal{(Theorem 7.4 of Nguyen-Scott-Seymour \cite{NSS23})}
    Let $\epsilon\in(0,\frac{1}{2})$ and $a\geq 1$. If $G$ is a graph such that for every induced subgraph $F$ of $G$ with $|F|\geq \epsilon^{2a}|G|$, there exists a complete or anti-complete $(k,|F|/k^a)$-blockade in $F$ with $k\in [2,\epsilon^{-1}]$, then
    $G$ has an $\epsilon$-restricted induced subgraph with at least $\epsilon^{3a}|G|$ vertices.
\end{lemma}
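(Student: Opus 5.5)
The plan is to build a rooted tree of nested complete/anti-complete blockades and to stop as soon as one ``colour'' has branched enough. The bookkeeping is arranged so that every set to which we apply the hypothesis has size at least $\epsilon^{2a}|G|$. The key observation is this: if $S=C_1\cup\cdots\cup C_K$ with the $C_i$ pairwise anti-complete and $|C_i|\le |S|/K$ for $K\ge \epsilon^{-1}$, then each vertex of $S$ has at most $|S|/K\le \epsilon|S|$ neighbours in $S$. So $S$ is $\epsilon$-sparse, and the complementary statement gives $\epsilon$-dense. The whole proof therefore reduces to finding many equal-sized pairwise anti-complete (or pairwise complete) pieces, at least $\epsilon^{-1}$ of them, whose total size is at least $\epsilon^{3a}|G|$.

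To find them, I would run the following branching process, maintaining two counters: $p$, the accumulated product of the $k$'s from anti-complete steps, and $q$, the same for complete steps. Start with $F=V(G)$ and $p=q=1$. At a current set $F$ with $p,q<\epsilon^{-1}$, apply the hypothesis to $F$ and obtain a $(k,|F|/k^a)$-blockade that is complete or anti-complete, with $k\in[2,\epsilon^{-1}]$. Trim all its blocks to exactly $\lceil |F|/k^a\rceil$ vertices, and multiply $k$ into $p$ or into $q$ according to the type. Then recurse into each block. Along any root-to-leaf path, each counter stays below $\epsilon^{-1}$ before its last step and is multiplied by at most $\epsilon^{-1}$ at that step, so it never exceeds $\epsilon^{-2}$. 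Since a block at depth $d$ has size $|G|/(\prod k_j)^a\ge |G|/(pq)^a$, every set we ever query has size at least $\epsilon^{4a}|G|$ in the crude count. To get down to the required $\epsilon^{2a}$ I would refine this: stop as soon as $pq\ge\epsilon^{-1}$ along the path, rather than waiting for one counter individually, and then argue separately that the stopping configuration still yields a restricted set. I would also first arrange, by a pigeonhole over the tree, that all siblings are explored \emph{synchronously}, so that at each level all nodes use the same $k$ and the same type. I would do this by restricting to a sub-tree in which the majority choice is kept, at a constant-factor cost absorbed in the slack between $\epsilon^{2a}$ and $\epsilon^{3a}$.

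Once the tree is synchronised, the levels alternate between ``complete'' and ``anti-complete'' splittings with known branching factors. The leaf sets are equal-sized, and the adjacency between two leaves is determined by the type of the level at which their paths diverge. I would then take the type $t$ whose product of branching factors reaches $\epsilon^{-1}$ first. I would select, from each level of the other type, only a single child, which costs nothing in the exponent count since we only divide the size. Keeping all children at the levels of type $t$ gives a set $S$ that is a union of at least $\epsilon^{-1}$ equal pieces which are pairwise anti-complete (or pairwise complete). By the observation above, $S$ is $\epsilon$-restricted, and its size is at least $|G|\cdot(\text{product}_t)^{1-a}(\text{product}_{\neg t})^{-a}\ge \epsilon^{3a}|G|$ after choosing the stopping rule so that $\text{product}_t\le\epsilon^{-2}$ and $\text{product}_{\neg t}\le\epsilon^{-1}$.

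The main obstacle is this bookkeeping in the stopping rule. It must guarantee simultaneously that every queried set has size at least $\epsilon^{2a}|G|$, so the hypothesis applies, and that the final exponent is $3a$ rather than something like $4a$. The difficulty is that the interleaving of the two types can let both counters grow to nearly $\epsilon^{-1}$ before either crosses. My first attempt would be to stop at the first moment when $\max(p,q)\ge\epsilon^{-1}$, using $k\le\epsilon^{-1}$ to bound the overshoot. If the exponents still do not close, I would instead strengthen the inductive statement to: ``every $F$ with $|F|\ge \epsilon^{2a}|G|$ contains an $\epsilon$-restricted set of size at least $\epsilon^{a}|F|\cdot(|F|/|G|)^{1/2}$,'' or a similar interpolating bound, and prove that by induction on $|F|$ using a single application of the hypothesis.
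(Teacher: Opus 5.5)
The paper does not prove this lemma; it quotes it from Nguyen--Scott--Seymour. Your ingredients are the right ones:
\begin{itemize}
\item the tree of nested complete/anti-complete blockades with counters $p,q$;
\item the stopping rule ``stop once $\max(p,q)\ge\epsilon^{-1}$'';
\item the observation that a union of at least $\epsilon^{-1}$ equal-sized pairwise anti-complete (complete) pieces is $\epsilon$-sparse ($\epsilon$-dense).
\end{itemize}
The max-rule already settles your size worry. At every queried node $p,q<\epsilon^{-1}$, so $pq<\epsilon^{-2}$ and the node has more than $\epsilon^{2a}|G|$ vertices. At a terminal node $pq<\epsilon^{-3}$, since the last factor is at most $\epsilon^{-1}$, so every terminal has more than $\epsilon^{3a}|G|$ vertices. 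Your proposed refinement ``stop once $pq\ge\epsilon^{-1}$'' would be harmful: then neither counter need reach $\epsilon^{-1}$, and you cannot produce $\epsilon^{-1}$ pieces.

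The genuine gap is the step that turns the tree into a set, namely the synchronisation. Sibling nodes may have different types and different $k\in[2,\epsilon^{-1}]$. Keeping a ``majority'' at each level is not a constant-factor loss: the $k$'s alone fall into about $\log_2(1/\epsilon)$ dyadic classes, and losses compound over up to $2\log_2(1/\epsilon)$ levels. More seriously, discarding nodes at a level removes children of anti-complete parents, destroying exactly the branching needed to reach $\epsilon^{-1}$ pieces. There is no slack in the piece count; the slack between $\epsilon^{2a}$ and $\epsilon^{3a}$ is in piece size only. Without synchronisation, ``the type whose product reaches $\epsilon^{-1}$ first'' is undefined, since different branches can terminate through different types.

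Your fallback, a one-parameter induction producing an $\epsilon$-restricted set in each large $F$, also fails. The restricted sets found inside the blocks of an anti-complete blockade may be dense, and dense sets in few anti-complete blocks cannot be combined. You are left with one block's set, smaller by a factor $k^{a}$, so no increasing size bound survives.

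The missing idea is a two-sided (Ramsey/$\alpha\omega$-type) induction over the unsynchronised tree. Show that every node with counters $(p,q)$ has either at least $1/(\epsilon p)$ pairwise anti-complete terminal descendants or at least $1/(\epsilon q)$ pairwise complete ones.
\begin{itemize}
\item For a terminal this is trivial, because one of the two targets is at most $1$.
\item At an anti-complete node with $\lceil k\rceil$ children, each with counters $(kp,q)$, there are two cases. Either some child already has $1/(\epsilon q)$ pairwise complete terminals, and you keep just that child. Or every child has $1/(\epsilon kp)$ pairwise anti-complete terminals, and the union over the $\lceil k\rceil\ge k$ mutually anti-complete children has at least $1/(\epsilon p)$.
\item Complete nodes are symmetric.
\end{itemize}
At the root this gives at least $\epsilon^{-1}$ pairwise anti-complete (or complete) terminals, each with more than $\epsilon^{3a}|G|$ vertices. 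Trimming them to a common size and applying your observation finishes the proof.
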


\begin{lemma}\label{lem:pure or sparse to pairwise complete or weakly sparse}
\textnormal{(Theorem 6.1 of Nguyen-Scott-Seymour \cite{NSS23})}
    Let $\epsilon\in(0,\frac{1}{2})$, $d\geq 1$, and $x=\epsilon^{5d}$. 
    If $G$ is a graph with $|G|\geq \epsilon^{-10d^2}$ such that
    for every induced subgraph $F$ of $G$ with $|F|\geq \epsilon^{d}|G|$, there exists a pure or $x$-sparse $(k,|F|/k^d)$-blockade in $F$ with $k\in [2,x^{-1}]$, then there is an $(\epsilon^{-1},x^{2d}|G|)$-blockade $(B_1,\ldots,B_{\ell})$ in $G$ such that for all distinct $i,j\in[\ell]$, $(B_i,B_j)$ is either complete or weakly $\epsilon^d$-sparse in $G$.
\end{lemma}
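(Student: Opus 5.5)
The idea is to build a rooted tree of blockades by applying the hypothesis repeatedly inside blocks, and then to read off the required $(\epsilon^{-1},x^{2d}|G|)$-blockade from the leaves. The basic observation is that a single application already gives the right kind of pairs. If $(C_1,\dots,C_k)$ is pure, every pair is complete or anti-complete, and anti-complete pairs are trivially weakly $\epsilon^d$-sparse. If it is $x$-sparse, then for $i>j$ each vertex of $C_i$ has at most $x|C_j|$ neighbours in $C_j$. Hence $|E(C_i,C_j)|\le x|C_i||C_j|$, and the pair is weakly $x$-sparse. The difficulty is that $k$ may be as small as $2$, so one application does not supply $\epsilon^{-1}$ blocks.

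Next I would check that these relations survive passing to sub-blocks. A complete or anti-complete pair is inherited by any subsets. For an $x$-sparse pair, take $C'\subseteq C_i$ and $D'\subseteq C_j$. Then $|E(C',D')|\le |C'|\,x|C_j|$, so the pair $(C',D')$ is weakly $\epsilon^d$-sparse as long as
\[
|D'|\ \ge\ x\epsilon^{-d}|C_j| \;=\; \epsilon^{4d}|C_j|.
\]
Only the "lower" block needs this lower bound, because the one-directional sparseness always bounds the neighbourhood in the earlier block. This is why $x=\epsilon^{5d}$ is chosen: it leaves a slack of $\epsilon^{-4d}$ in how much a block may shrink below the node where its relation to another block was fixed.

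The construction then proceeds as follows.
\begin{itemize}
\item Maintain a family of disjoint current blocks, all descended from $G$, such that every two of them are complete or weakly $\epsilon^d$-sparse. The relation between two blocks is the one determined at their lowest common ancestor.
\item At each step, refine the largest current block $F$ by applying the hypothesis to $G[F]$, which gives $k\in[2,x^{-1}]$ new blocks of size $|F|/k^d$. This is legitimate as long as $|F|\ge\epsilon^d|G|$, and that condition is preserved because we stop early.
\item Stop once the number of leaves reaches $\epsilon^{-1}$.
\end{itemize}
For the accounting, note that replacing a block by $k$ sub-blocks multiplies the leaf count by about $k$ and divides the block size by $k^d$. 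If the total product of the branching numbers along any root-to-leaf path stays below roughly $\epsilon^{-2}$, then every leaf has size at least about $|G|\,\epsilon^{2d}x^{d}\ge x^{2d}|G|$. Along the way, each path below an $x$-sparse node shrinks by a factor of at most $\epsilon^{-4d}$, which is what the inheritance bound above requires. The hypothesis $|G|\ge\epsilon^{-10d^2}$ is used only to ensure that all these sizes are at least $1$ and that the integer parts $\lfloor|F|/k^d\rfloor$ cause no harm.

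The main obstacle is a single application returning a large $k$, close to $x^{-1}=\epsilon^{-5d}$. In that case the sub-blocks have shrunk by $k^d$, far more than the $\epsilon^{-4d}$ slack allows. I would handle this by truncation. Whenever $k\ge\epsilon^{-1}$, discard all but $\lceil\epsilon^{-1}\rceil$ of the new blocks and stop, since those blocks already form the desired blockade. This requires checking that their size $|F|/k^d\ge x^{d}\epsilon^{d}|G|$ is still at least $x^{2d}|G|$, which holds. Otherwise every $k<\epsilon^{-1}$, and I would refine greedily, always splitting the largest leaf. An induction shows that the leaf count grows at least as fast as the minimum block size decays, up to the power $d$. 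This keeps the multiplicative shrinkage below each lowest common ancestor within $\epsilon^{-4d}$, up to the power $d$. That balancing bookkeeping is the step I expect to need the most care.
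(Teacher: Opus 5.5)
The paper does not prove this lemma; it imports it from Nguyen--Scott--Seymour. So I assess your argument on its own.

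Your architecture is sound:
\begin{itemize}
\item refining leaves into a tree of blockades;
\item the observation that one-directional $x$-sparsity passes to $(C',D')$ provided only the earlier block keeps an $\epsilon^{4d}$ fraction;
\item the truncation when $k\ge\epsilon^{-1}$, which gives blocks of size at least $|F|x^d\ge \epsilon^d x^d|G|\ge x^{2d}|G|$.
\end{itemize}

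The gap is exactly the step you flagged, and it is the quantitative heart of the proof. You assert that $|F|\ge\epsilon^d|G|$ for the leaf being split is ``preserved because we stop early.'' Nothing you state guarantees this. The hypothesis only supplies $k$ blocks of size $|F|/k^d$, and these need not cover $F$. So the total volume of the leaves can drop by a factor of about $k^{d-1}$ at every split, and an averaging argument on volume says nothing about the largest leaf.

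The invariant you propose also fails if it means that the smallest leaf has size about $|G|/N^d$ when there are $N$ leaves. Take $N\approx\epsilon^{-1}/2$ equal leaves and split one of them with $k\approx\epsilon^{-1}/2$. The new leaves have size only about $|G|/(Nk)^d\approx 4^d\epsilon^{2d}|G|$, far below $|G|/(N+k-1)^d\approx\epsilon^d|G|$. Tracking only the largest leaf does not close the induction either, since $M/k^d$ can be smaller than $|G|/(N+k-1)^d$.

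The missing idea is the potential $\Phi=\sum_L |L|^{1/d}$, summed over the current leaves. Replacing a leaf $F$ by $k$ blocks $C_1,\dots,C_k$, each of size at least $|F|/k^d$, never decreases $\Phi$, because
\[
\sum_{t=1}^{k}|C_t|^{1/d}\;\ge\;k\Bigl(\frac{|F|}{k^{d}}\Bigr)^{1/d}=|F|^{1/d}.
\]
This gives the following chain of consequences.
\begin{itemize}
\item Throughout the process $\Phi\ge|G|^{1/d}$.
\item While there are $N<\epsilon^{-1}$ leaves, the largest leaf has size at least $(\Phi/N)^d\ge|G|/N^d\ge\epsilon^d|G|$, so the hypothesis applies to it.
\item If the returned $k$ is less than $\epsilon^{-1}$, the new leaves have size at least $|G|/(Nk)^d\ge\epsilon^{2d}|G|$. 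Hence every leaf in the final blockade has size at least $\epsilon^{2d}|G|\ge x^{2d}|G|$, and this is at least $1$ because $|G|\ge\epsilon^{-10d^2}$.
\end{itemize}

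This also makes your lowest-common-ancestor bookkeeping unnecessary. Suppose leaves $L\subseteq C_i$ and $L'\subseteq C_j$ with $i>j$ arise from an $x$-sparse split. Since $|C_j|\le|G|\le\epsilon^{-2d}|L'|$, we get $|E(L,L')|\le x|L||C_j|\le x\epsilon^{-2d}|L||L'|=\epsilon^{3d}|L||L'|$, so the pair is weakly $\epsilon^d$-sparse.

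With this potential in place your proof goes through. Without it, the central counting claim is unsupported.
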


The following lemmas are implicit in \cite{NSS232,NSS25,NSS23}

\begin{lemma}\label{lem:weakly sparse to sparse}
\textnormal{(contained in Lemma 7.1 of Nguyen-Scott-Seymour \cite{NSS23})}
    Let $a\geq 0$, $0<\epsilon\leq \frac{1}{4}$ and $m>0$. If a graph $G$ has a blockade $(A_1,\ldots,A_{\ell})$ with $\ell=\lceil \epsilon^{-1}\rceil$ and $|A_i|\geq m$,  
    then there is a blockade $(D_1,\ldots,D_\ell)$ such that 
    \begin{itemize}
        \item $D_i$ is a subset of $A_i$ with $|D_i|=\lceil\epsilon \lceil m\rceil\rceil$ for every $i\in [\ell]$,  
        \item if $A_i$ and $A_j$ are weakly $\epsilon^{a}$-sparse, then $D_i$ is $\epsilon^{a-5}$-sparse to $D_j$ and $D_j$ is $\epsilon^{a-5}$-sparse to $D_i$, and
        \item if $A_i$ and $A_j$ are weakly $(1-\epsilon^{a})$-dense, then $D_i$ is $(1-\epsilon^{a-5})$-dense to $D_j$ and $D_j$ is $(1-\epsilon^{a-5})$-dense to $D_i$.
    \end{itemize}
\end{lemma}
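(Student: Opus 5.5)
This lemma is a version of the standard "weakly sparse implies sparse after random/greedy thinning" argument from Lemma 7.1 of \cite{NSS23}. I would prove it by deleting high-degree vertices and then sampling, handling all pairs of blocks simultaneously. Throughout, "$(1-\epsilon^a)$-dense" is treated as the complementary statement: weakly $(1-\epsilon^a)$-dense in $G$ means weakly $\epsilon^a$-sparse in $\overline{G}$. So it suffices to handle the sparse case, and the dense case follows by running the same argument in $\overline{G}$ at the same time. Both conditions can be imposed together because each step below is a Markov-type deletion with its own small budget.

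The first step is a cleaning stage. Fix a pair $(i,j)$ with $A_i,A_j$ weakly $\epsilon^a$-sparse, and first shrink each $A_i$ to exactly $\lceil m\rceil$ vertices. This needs care, since weak sparsity need not survive arbitrary shrinking. Instead I would choose a uniformly random subset of size $\lceil m\rceil$ of each $A_i$. Then the expected edge density between the chosen sets equals the original density, which is at most $\epsilon^a$.

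Next, for each $i$ I count the vertices $u\in A_i$ that are bad for some $j$, meaning $u$ has more than $\epsilon^{a-2}|A_j|$ neighbours in $A_j$ for a sparse pair (or more than $\epsilon^{a-2}|A_j|$ non-neighbours for a dense pair). By Markov's inequality, each fixed $j$ contributes at most $\epsilon^{2}|A_i|$ bad vertices. Summing over the $\ell\le \epsilon^{-1}+1$ choices of $j$ gives at most about $2\epsilon|A_i|\le |A_i|/2$ bad vertices. After deleting them, every $A_i'$ keeps at least half its size, and each remaining vertex has at most $\epsilon^{a-2}|A_j|\le 2\epsilon^{a-2}|A_j'|$ neighbours in the original $A_j$, hence in $A_j'$.

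The obstacle is that this bound is measured relative to $|A_j'|$, while the target sets $D_j$ have size only $\lceil\epsilon\lceil m\rceil\rceil$, much smaller than $|A_j'|$. A deterministic choice of $D_j$ could concentrate neighbours, and this is the main difficulty. I would try first to handle it by a second averaging step instead of concentration inequalities. Choose $D_j\subseteq A_j'$ uniformly at random of the required size, and call $u\in D_i$ bad if it has more than $\epsilon^{a-5}|D_j|$ neighbours in $D_j$. By Markov's inequality, a fixed $u$ is bad with probability at most $2\epsilon^{a-2}/\epsilon^{a-5}=2\epsilon^{3}$. Summing over $j$, the expected number of bad vertices among the sampled vertices is at most a $2\epsilon^2$ fraction of them.

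To close the argument, I would sample $D_i$ of size $2\lceil\epsilon\lceil m\rceil\rceil$ (possible since $|A_i'|\ge \lceil m\rceil/2\ge 2\epsilon\lceil m\rceil$ when $\epsilon\le 1/4$). I would then discard the bad vertices and truncate to exactly $\lceil\epsilon\lceil m\rceil\rceil$. Halving the set at most doubles the relative degree, and the spare powers of $\epsilon$ absorb this factor of $2$ (using $2\epsilon\le 1$), which yields the exponent $a-5$. Removing vertices from $D_j$ can only lower neighbour counts in $D_j$, so sparseness is preserved in both directions. The dense case is identical in $\overline{G}$.
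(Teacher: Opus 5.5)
The paper does not prove this lemma; it quotes it from Lemma~7.1 of Nguyen--Scott--Seymour. So I am judging your argument on its own. It has a genuine gap at the point where you ``close the argument''.

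\textbf{The gap.} You need a single outcome of the random choice in which, for \emph{every} $i$ simultaneously, at most half of the sampled vertices of $D_i$ are bad. What you actually have is a per-block bound in expectation. Passing to all $\ell\approx\epsilon^{-1}$ blocks at once costs a factor $\ell$, whether by Markov plus a union bound or by bounding the expected total number of bad vertices. Your budget cannot pay for it. Write $s=\lceil\epsilon\lceil m\rceil\rceil$.
\begin{itemize}
\item For the truncated sets to come out $\epsilon^{a-5}$-sparse, the badness threshold in a sample of size $2s$ must be $\tfrac12\epsilon^{a-5}|D_j|$, not $\epsilon^{a-5}|D_j|$. With your threshold you only get $2\epsilon^{a-5}$. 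No spare powers of $\epsilon$ are left to absorb the factor $2$: cleaning used two and sampling used the other three.
\item With the corrected threshold, a fixed vertex is bad for a given $j$ with probability at most $4\epsilon^3$, hence bad for some $j$ with probability below $4\epsilon^2$.
\item A sampled block of size $2s$ then has fewer than $8\epsilon^2 s$ bad vertices in expectation.
\item The union bound over blocks is $8\epsilon^2\ell$, which equals $2$ at $\epsilon=1/4$. Even with your uncorrected threshold it is $4\epsilon^2\ell$, which exceeds $1$ for $\epsilon$ just below $1/4$, where $\ell=5$.
\end{itemize}
Concentration inequalities do not rescue this uniformly, because $s$ can be as small as $1$. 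There is also a rounding slip: $|A_i'|\ge 2\epsilon\lceil m\rceil$ does not give $|A_i'|\ge 2s$. For example, with $m=|A_i|=1$ you cannot sample two vertices.

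\textbf{The missing idea.} It is already in your first step, which you set up and then abandon. As written, your count of at most $\epsilon^2|A_i|$ bad vertices per $j$ is not justified for the shrunken sets, since they are weakly $\epsilon^a$-sparse only in expectation. If the count is meant for the original sets, the first step plays no role. Here is how to use it properly.
\begin{itemize}
\item Assume $a>5$; otherwise both conclusions are vacuous. Then no pair is both weakly $\epsilon^a$-sparse and weakly $(1-\epsilon^a)$-dense.
\item Take independent uniform $\lceil m\rceil$-subsets $A_i'\subseteq A_i$. Let $X_{ij}$ be the edge density of $(A_i',A_j')$ for sparse pairs, one minus it for dense pairs, and $0$ otherwise.
\item Average the \emph{sum} of the $X_{ij}$. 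Its expectation is at most $\binom{\ell}{2}\epsilon^a\le\epsilon^{a-2}$. So some outcome makes every relevant pair weakly $\epsilon^{a-2}$-sparse (respectively dense) simultaneously.
\end{itemize}
Everything after this is deterministic.
\begin{itemize}
\item Delete from each $A_i'$ the vertices with more than $\epsilon^{a-4}\lceil m\rceil$ neighbours (respectively non-neighbours) in some $A_j'$.
\item There are fewer than $\epsilon^2\lceil m\rceil$ such vertices per $j$. Since $\ell-1<\epsilon^{-1}$, there are fewer than $\epsilon\lceil m\rceil$ in total, so at least $s$ vertices survive.
\item Any $s$ of the survivors can serve as $D_i$. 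A vertex of $D_i$ then has at most $\epsilon^{a-4}\lceil m\rceil\le\epsilon^{a-5}|D_j|$ neighbours in $D_j$.
\end{itemize}
The obstacle you identified disappears because all blocks now have size exactly $\lceil m\rceil$. So $D_j$ is an $\epsilon$-fraction of $A_j'$ and costs only one power of $\epsilon$, and no second round of sampling is needed. The exponent $a-5=a-2-2-1$ and the size $\lceil\epsilon\lceil m\rceil\rceil$ in the statement are exactly what this argument produces. Your cleaning step on the original sets and your treatment of the dense case via $\overline{G}$ are fine as they stand.
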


\begin{lemma}\label{lem:leaf expanding}
\textnormal{(contained in Lemma 5.1 of Nguyen-Scott-Seymour \cite{NSS232})}
    If a finite set $\mathcal{F}$ of graphs is leaf-reducible, then there exist constants $d>0$ and $h\ge 1$ such that for every $y>0,b>1$ and every $y$-sparse $\mathcal{F}$-free graph $G$, one of the following holds.
    \begin{itemize}
        \item There are disjoint $X,Y\subseteq V(G)$ with $|X|\geq y^{bd+1}|G|$ and $|Y|\geq (1-hy)|G|$ such that $Y$ is anti-complete to $X$.
        
        \item $G$ has a $y^b$-restricted induced subgraph of size at least $y^{bd+1}|G|$.
    \end{itemize}
\end{lemma}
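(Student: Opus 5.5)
The plan is to combine the viral formulation of the Erd\H{o}s--Hajnal property (Theorem \ref{thm:E-H-poly-Rodl-viral-equivalent}) with the leaf of $H$. Since $\mathcal{F}$ is leaf-reducible, fix $H\in\mathcal{F}$, a vertex $v$ of $H$ of degree one with unique neighbour $u$, and let $H'=H-v$. By assumption $\mathcal{F}'=(\mathcal{F}\setminus\{H\})\cup\{H'\}$ has the Erd\H{o}s--Hajnal property. By Theorem \ref{thm:E-H-poly-Rodl-viral-equivalent} it is therefore viral with some exponent $d_0\ge 1$. Put $h=|H|$; the final constants will be $d=d_0(h-1)$ together with this $h$ (after minor adjustments).

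Let $G$ be $y$-sparse and $\mathcal{F}$-free, and set $\epsilon=y^b$; we may assume $\epsilon<1/2$. Apply viralness of $\mathcal{F}'$ with this $\epsilon$. If $\mathrm{ind}_{F}(G)<(\epsilon^{d_0}|G|)^{|F|}$ for every $F\in\mathcal{F}'$, then $G$ has an $\epsilon$-restricted induced subgraph of size at least $\epsilon^{d_0}|G|\ge y^{bd+1}|G|$, which is the second outcome. Otherwise some $F\in\mathcal{F}'$ has many copies. Since $G$ is $\mathcal{F}$-free, every member of $\mathcal{F}\setminus\{H\}$ has zero copies, so $F=H'$. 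Hence
\[
\mathrm{ind}_{H'}(G)\ge (y^{bd_0}|G|)^{h-1}.
\]

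Next we average over the images of $H'-u$. There are at most $|G|^{h-2}$ injections $\psi$ of $V(H')\setminus\{u\}$ into $V(G)$. So some $\psi$, with image $W=\{w_1,\dots,w_{h-2}\}$, extends to at least $y^{bd_0(h-1)}|G|=y^{bd}|G|$ copies of $H'$. Let $X_0$ be the set of vertices $x$ that serve as the image of $u$ in such an extension; then $|X_0|\ge y^{bd}|G|$. The key observation is the following. Suppose $z\notin W\cup X_0$ is adjacent to some $x\in X_0$ and nonadjacent to all of $W$. Then $W\cup\{x,z\}$ induces $H$, with $z$ playing the role of $v$, contradicting that $G$ is $\mathcal{F}$-free. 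Therefore every vertex outside $W\cup X_0\cup\bigcup_i N(w_i)$ is anti-complete to $X_0$.

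Now choose $X\subseteq X_0$ with $|X|=\lceil y^{bd+1}|G|\rceil\le y|G|+1$ (note $y^{bd}\ge y^{bd+1}$), and let
\[
Y=V(G)\setminus\Bigl(X_0\cup W\cup\bigcup_i N(w_i)\Bigr).
\]
For this to work, first shrink $X_0$ itself before defining $Y$, so that the removed set is only $X$ together with $W$ and the neighbourhoods $N(w_i)$; the observation above remains valid for any subset of $X_0$. Since $G$ is $y$-sparse, $|N(w_i)|\le y|G|$, so the removed set has size at most
\[
(h-2)y|G|+(h-2)+y|G|+1\le hy|G|
\]
once $|G|$ is large compared with $1/y$. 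The remaining small cases can be absorbed by enlarging $h$, or by noting that a single vertex is trivially restricted when $y^{bd+1}|G|\le 1$. This gives $|Y|\ge(1-hy)|G|$ with $Y$ anti-complete to $X$, which is the first outcome.

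The main obstacle is bookkeeping rather than ideas. The exponent in the averaging step must match $bd+1$, and the rounding and small-$|G|$ cases must not break the bound $(1-hy)|G|$. I expect both to be handled by choosing $d$ and $h$ slightly larger than the natural values above.
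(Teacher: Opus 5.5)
Your argument is correct and is essentially the standard proof behind the cited Lemma~5.1 of Nguyen--Scott--Seymour; the paper gives no proof of its own, it only cites that lemma. You apply virality of $(\mathcal{F}\setminus\{H\})\cup\{H-v\}$ with $\epsilon=y^b$ to force $\mathrm{ind}_{H-v}(G)\ge (y^{bd_0}|G|)^{|H|-1}$, pigeonhole a copy $W$ of $H-\{u,v\}$ with many extensions, and use $H$-freeness to make the extension set anticomplete to everything outside $W\cup\bigcup_i N(w_i)$. Your bookkeeping also holds: the key observation only uses $z\notin W\cup\{x\}$, so shrinking $X_0$ to $X$ is legitimate, and taking $h=2|H|$ absorbs the additive constants once $y^{bd+1}|G|\ge 1$, since then $y|G|\ge 1$.
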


\begin{lemma}\label{lem:get a blockade by iteration}
\textnormal{(contained in Lemma 5.3 of Nguyen-Scott-Seymour \cite{NSS23})}
Let $x\in(0,\frac{1}{2})$, $a>1, b>0$, $c=2^{-4b}$ and $y\in(0,c]$. 
If $G$ is a graph with $|G|\geq y^{-(a+2)}$ such that for every induced subgraph $F$ of $G$ with $|F|\geq c|G|$, there are disjoint $X,Y\subseteq V(F)$ such that $|X|\geq y^a|F|$, $|Y|\geq (1-by)|F|$, and $Y$ is $x$-sparse or complete to $X$, then there is an $x$-sparse or complete $(y^{-1},y^{a+2}|G|)$-blockade in $G$.
\end{lemma}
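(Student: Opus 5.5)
The plan is a greedy iteration that peels off one set $X$ at a time and recurses inside the corresponding $Y$, followed by a pigeonhole step on the type of each step (sparse or complete). As the goal I will aim for a blockade whose blocks are either pairwise complete, or satisfy ``$B_i$ is $x$-sparse to $B_j$ whenever $i>j$''. This matches the paper's definition of an $x$-sparse blockade, and the ordering of the iteration is designed to produce exactly this direction.

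\textbf{Construction.} Set $F_0=G$. For $t=0,1,\ldots,T-1$, with $T\approx 2y^{-1}$, suppose $F_t$ is an induced subgraph with $|F_t|\ge c|G|$.
\begin{enumerate}[(i)]
\item Apply the hypothesis to $F_t$ to obtain disjoint $X_t,Y_t\subseteq V(F_t)$ with $|X_t|\ge y^a|F_t|$ and $|Y_t|\ge(1-by)|F_t|$, where $Y_t$ is $x$-sparse or complete to $X_t$.
\item Record the type of step $t$ (sparse or complete), and set $F_{t+1}=G[Y_t]$.
\end{enumerate}

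\textbf{Properties of the construction.}
\begin{itemize}
\item The sets $X_t$ are pairwise disjoint. This holds because $X_{t'}\subseteq F_{t'}\subseteq Y_t$ for $t'>t$, and $Y_t$ is disjoint from $X_t$.
\item The iteration has the right direction. Every vertex of a later block $X_{t'}$ lies in $Y_t$, so if step $t$ is of sparse type, each vertex of $X_{t'}$ has at most $x|X_t|$ neighbours in $X_t$. If step $t$ is of complete type, then $X_{t'}$ is complete to $X_t$.
\item Among the $T$ steps, at least $\lceil T/2\rceil\ge y^{-1}$ have the same type. Keeping only the blocks $X_t$ from those steps, in their original order, gives an $x$-sparse blockade or a complete blockade.
\end{itemize}

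\textbf{Block sizes.} Each retained block satisfies
\[
|X_t|\ge y^a|F_t|\ge y^a c|G|\ge y^{a+1}|G|\ge y^{a+2}|G|,
\]
using $c\ge y$. The assumption $|G|\ge y^{-(a+2)}$ guarantees that these sizes are at least $1$, so the blocks are genuine nonempty sets. This gives the required $(y^{-1},y^{a+2}|G|)$-blockade.

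\textbf{Main obstacle: keeping $|F_t|\ge c|G|$ for all $t<T$.} We have $|F_t|\ge(1-by)^t|G|$. Since $by\le b2^{-4b}<1/2$, the inequality $1-z\ge 4^{-z}$ for $z\in[0,1/2]$ gives
\[
(1-by)^t\ge 4^{-byt}.
\]
With $t<T\le 2y^{-1}$ this is at least $4^{-2b}=2^{-4b}=c$. This is exactly why the constant $c=2^{-4b}$ is chosen.

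The delicate point is rounding. One must choose $T$ so that $T-1\le 2y^{-1}$, while still $\lceil T/2\rceil\ge y^{-1}$. Taking $T=\lfloor 2y^{-1}\rfloor$, or running the iteration only while $|F_t|\ge c|G|$ and checking that at least $2\lceil y^{-1}\rceil-1$ steps occur, should work. If the constant is slightly off, one can absorb the slack using the margin between $y^{a+1}$ and the required $y^{a+2}$ in the block size.
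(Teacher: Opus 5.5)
Your proof is correct and is essentially the paper's argument. The paper runs the same iteration (split the current remainder into $X,Y$ and continue inside $Y$), phrased as a maximal blockade $(B_1,\dots,B_n)$ that also keeps the final remainder $Y$ as a block, and then pigeonholes on the type of each step exactly as you do.

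On rounding, your second option is the right one. Run $T=2\lceil y^{-1}\rceil-1$ steps. Then $T-1<2y^{-1}$, so every $F_t$ has size at least $c|G|$, and $\lceil T/2\rceil=\lceil y^{-1}\rceil$.

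Your first option, $T=\lfloor 2y^{-1}\rfloor$, fails when the fractional part of $y^{-1}$ lies in $(0,\frac12)$. For example, $y^{-1}=3.2$ gives $T=6$, which guarantees only $3$ blocks of one type when $4$ are needed. The margin in the block sizes cannot fix this, because the binding constraint is $|F_t|\ge c|G|$, which is needed to apply the hypothesis at all.
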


\begin{proof}[Proof of Lemma \ref{lem:get a blockade by iteration}]
    Let $n$ be a maximal integer such that $G$ has a blockade $(B_1,\ldots,B_n)$ such that $|B_{i}|\geq y^{a+2}|G|$ for all $i\in[n]$ and $|B_n|\geq (1-by)^n|G|$, and for all $i\in [n]$, either $B_j$ is $x$-sparse to $B_i$ for all $j\in[n]$ with $j>i$ or $B_j$ is complete to $B_i$ for all $j\in[n]$ with $j>i$. 
    This is possible since we may take $n=1$ and $B_1=V(G)$. 
    Suppose that $n< 2y^{-1}$. 
    Note that $\log (1-t)\geq -2t\log2$ for every $t\in (0,\frac{1}{2})$. 
    Since $y\leq c=2^{-4b}$, $by\leq b2^{-4b}<\frac{1}{2}$. 
    This implies that $(1-by)^{2y^{-1}}=\exp (2y^{-1}\log(1-by))\geq \exp (2y^{-1}(-2by\log 2))=\exp(-4b\log 2)=2^{-4b}=c$. 
    Since $|B_n|\geq (1-by)^n|G|\geq (1-by)^{2y^{-1}}|G|\geq 2^{-4b}|G|=c|G|$, there are disjoint $X,Y\subseteq B_n$ such that $|X|\geq y^a|B_n|\geq y^a\cdot c|G|\geq y^{a+2}|G|$, $|Y|\geq (1-by)|B_n|\geq (1-by)^{n+1}|G|$, and $Y$ is $x$-sparse or complete to $X$, and for all $i\in[n-1]$, $X\cup Y$ is $x$-sparse (complete) to $B_i$ if $B_n$ is $x$-sparse (complete, resp.) to $B_i$. 
    So $(B_1,\ldots,B_{n-1},X,Y)$ contradicts the maximality of $n$.
    This proves that $n\geq 2y^{-1}$.
    Let $Q,R$ be subsets of $[n]$ such that $Q$ is the set of indices $i$ such that $B_j$ is $x$-sparse to $B_i$ for all $j\in [n]$ with $j>i$ and $R$ is the set of indices $i$ such that $B_j$ is complete to $B_i$ for all $j\in [n]$ with $j>i$.
    By the definition of $(B_1,\ldots,B_n)$, each index $i\in[n]$ is in one of $Q,R$. 
    So one of $Q,R$ has size at least $\frac{n}{2}\geq y^{-1}$.
    Therefore, one of $(B_i:i\in Q)$ and $(B_i:i\in R)$ is a $x$-sparse or complete $(y^{-1},y^{a+2}|G|)$-blockade.
\end{proof}

It is easy to see that Lemma \ref{lem:get a blockade by iteration} still holds if we replace the word ``$x$-sparse"  with ``anti-complete".

\begin{lemma}\label{lem:give a more sparse subgraph 2}
\textnormal{(contained in Lemma 3.2 of Nguyen-Scott-Seymour \cite{NSS25})} 
    Let $c\in (0,1)$, $b_1>1, b_2,b_3>0$, and $b_1b_2\geq b_2+b_3$. 
    Suppose that $x\in (0,c)$ and $G$ is a graph satisfying: 
    \begin{itemize}
        \item there is a $c$-sparse induced subgraph of $G$ with at least $c^{b_2}|G|$ vertices, and 
        \item for every $\lambda\in [x,c]$ and every $\lambda$-sparse induced subgraph $F$ of $G$ with $|F|\geq \lambda^{b_2}|G|$, there is a $\lambda^{b_1}$-sparse induced subgraph of $F$ with at least $\lambda^{b_3}|F|$ vertices. 
    \end{itemize}
    Then $G$ contains an $x$-sparse induced subgraph with at least $x^{b_1b_2}|G|$ vertices. 
\end{lemma}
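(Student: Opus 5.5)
This lemma is quoted from Lemma 3.2 of \cite{NSS25}; I would prove it by iterated sparsification along a geometric sequence of sparsity parameters.

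\emph{Setup.} Let $G_0$ be the given $c$-sparse induced subgraph, with $|G_0|\ge c^{b_2}|G|$, and set $\lambda_0=c$. For $i\ge 0$, suppose $G_i$ is a $\lambda_i$-sparse induced subgraph with $|G_i|\ge \lambda_i^{b_2}|G|$ and $\lambda_i\ge x$. Then the second hypothesis applies to $F=G_i$ and yields a $\lambda_i^{b_1}$-sparse induced subgraph $G_{i+1}$ of $G_i$ with $|G_{i+1}|\ge \lambda_i^{b_3}|G_i|$. Put $\lambda_{i+1}=\lambda_i^{b_1}$.

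\emph{Maintaining the size invariant.} We need $|G_{i+1}|\ge \lambda_{i+1}^{b_2}|G|$. This follows from
\[
|G_{i+1}|\ge \lambda_i^{b_3}\,\lambda_i^{b_2}|G| = \lambda_i^{b_2+b_3}|G| \ge \lambda_i^{b_1b_2}|G| = \lambda_{i+1}^{b_2}|G|,
\]
where the middle inequality uses $\lambda_i<1$ and $b_1b_2\ge b_2+b_3$. This is precisely where that hypothesis enters.

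\emph{Stopping.} Since $b_1>1$ and $\lambda_0=c<1$, we have $\lambda_i=c^{b_1^i}\to 0$ doubly exponentially. So there is a first index $k$ with $\lambda_{k+1}<x$; at that point $\lambda_k\ge x$ and $\lambda_k^{b_1}<x$.

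\emph{Handling the overshoot.} This is the step needing care. Do not perform one more full step with parameter $\lambda_k$. Instead, let $\mu=x^{1/b_1}$, so that $x\le \lambda_k$ gives $x<\mu\le \lambda_k$, with $\mu\in[x,c]$. Then $G_k$ is $\lambda_k$-sparse, but we need it to be $\mu$-sparse.

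To arrange this, modify the whole sequence by running it backwards from the target. Define $\mu_j=x^{b_1^{-j}}$, so that $\mu_0=x$. Let $k$ be the largest index with $\mu_k\le c$, which gives $\mu_k\in(c^{b_1},c]$. The first step is then the delicate one: we have a $c$-sparse graph $G_0$ and need a $\mu_k$-sparse graph $G_1$ of comparable size.

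Apply the hypothesis with $\lambda=c$ to get a $c^{b_1}$-sparse subgraph $G_1$ with $|G_1|\ge c^{b_3}|G_0|\ge c^{b_2+b_3}|G|\ge c^{b_1b_2}|G|$. Since $c^{b_1}<\mu_k$, $G_1$ is also $\mu_k$-sparse, and $|G_1|\ge \mu_k^{b_2}|G|$ because $c^{b_1}<\mu_k$.

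From there, iterate with $\lambda=\mu_j$ for $j=k,k-1,\dots,1$. Each step turns a $\mu_j$-sparse subgraph of size at least $\mu_j^{b_2}|G|$ into a $\mu_j^{b_1}=\mu_{j-1}$-sparse subgraph of size at least $\mu_j^{b_2+b_3}|G|\ge\mu_{j-1}^{b_2}|G|$, by the same computation as above. Termination after $j=1$ gives an $x$-sparse induced subgraph of size at least $x^{b_2}|G|\ge x^{b_1b_2}|G|$, as required.

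The main obstacle is this alignment of the geometric sequence so that it lands exactly at $x$. Reversing the sequence from $x$ and absorbing the mismatch at the top, using only monotonicity of sparseness, resolves it.
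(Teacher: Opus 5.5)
\textbf{Gap in the overshoot step.} Your forward iteration (Setup, size invariant, Stopping) is correct, but the reversed scheme that replaces it contains a false inequality. You claim $|G_1|\ge\mu_k^{b_2}|G|$ ``because $c^{b_1}<\mu_k$''. That points the wrong way. You only know $|G_1|\ge c^{b_2+b_3}|G|\ge (c^{b_1})^{b_2}|G|$, and $c^{b_1}<\mu_k$ gives $(c^{b_1})^{b_2}<\mu_k^{b_2}$. Using $c^{b_2+b_3}$ directly helps only when $\mu_k\le c^{1+b_3/b_2}$, but $\mu_k$ can lie anywhere in $(c^{b_1},c]$.

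No patch of this scheme is possible, because it would yield an $x$-sparse induced subgraph with at least $x^{b_2}|G|$ vertices, and that stronger statement is false. Take $b_1=6$, $b_2=1$, $b_3=5$, $c=1/m$, $x=1/(2m)$ with $m\ge 2$, and let $G$ be the disjoint union of $m$ cliques of size $m^5$.
\begin{itemize}
\item $G$ itself is $c$-sparse, so the first hypothesis holds.
\item For $\lambda\le 1/m$, any induced $F$ meeting $r$ cliques has $|F|\le rm^5\le r\lambda^{-5}$. A stable transversal of those $r$ cliques is therefore a $\lambda^{6}$-sparse subgraph of $F$ with at least $\lambda^5|F|$ vertices, so the second hypothesis holds.
\item An $x$-sparse $F$ meeting $r$ cliques satisfies $|F|(1-rx)\le r$. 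Since $rx\le 1/2$, this gives $|F|\le 2m<m^5/2=x^{b_2}|G|$.
\end{itemize}
Here your $k$ is $0$ and $\mu_0=x$, while the hypothesis only guarantees $|G_1|\ge c^{b_3}|G|=m$.

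\textbf{The fix.} The overshoot is not an obstacle, and the step you told yourself not to take is exactly the right one. At the first index $k$ with $\lambda_k^{b_1}<x$ you still have $\lambda_k\in[x,c]$, with $G_k$ $\lambda_k$-sparse and $|G_k|\ge\lambda_k^{b_2}|G|$. The hypothesis then gives $G_{k+1}$ that is $\lambda_k^{b_1}$-sparse, hence $x$-sparse, with $|G_{k+1}|\ge\lambda_k^{b_2+b_3}|G|\ge\lambda_k^{b_1b_2}|G|\ge x^{b_1b_2}|G|$; the last inequality uses $\lambda_k\ge x$. The exponent $b_1b_2$ rather than $b_2$ in the conclusion is precisely the room left for this final step. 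With it, your forward argument is the standard proof. The paper does not reprove this lemma; it quotes it from Lemma 3.2 of Nguyen--Scott--Seymour.
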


It is easy to see that Lemma \ref{lem:give a more sparse subgraph 2}
still holds if we replace ``sparse'' throughout by ``restricted''.

\begin{lemma}\label{lem:get a comb in a sparse graph}
\textnormal{(contained in Lemma 5.2 of Nguyen-Scott-Seymour \cite{NSS23})}
Let $0<x\leq y\leq 2^{-8}$. For every $y^3$-sparse graph $G$ with $|G|\geq y^{-4}$, one of the following holds.

\begin{itemize}
    \item There are disjoint $X,Y\subseteq V(G)$ such that $|X|\geq y^4|G|$, $|Y|\geq (1-4y)|G|$, and $Y$ is $x$-sparse to $X$.
    
    \item $G$ is $2y^4$-sparse.
    
    \item For some integer $\ell\in[y^{-1},x^{-2}]$, there is an $(\ell, y^4|G|/\ell^2)$-comb $((a_i,B_i):i\in[\ell])$ in $G$, and a vertex $v\in V(G)\setminus(\{a_i:i\in[\ell]\}\cup\bigcup_{i\in[\ell]}B_i)$ such that $v$ is complete to $\bigcup_{i\in[\ell]}B_i$ and anti-complete to $\{a_i:i\in[\ell]\}$.
\end{itemize}

\end{lemma}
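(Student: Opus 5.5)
The plan is to follow the argument of Lemma 5.2 of \cite{NSS23}. The idea is to look at high-degree vertices and their neighbourhoods, and to extract either the sparse pair $(X,Y)$ or the comb.

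Throughout, let $n=|G|$. We may assume the second outcome fails, so some vertex $v$ has more than $2y^4 n$ neighbours. Since $G$ is $y^3$-sparse, $|N(v)|\le y^3 n$. Write $N=N(v)$ and $M=V(G)\setminus(N\cup\{v\})$, so that $|M|\ge (1-2y^3)n$.

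\textbf{Step 1: the easy case.} Call $u\in M$ \emph{heavy} if $u$ has more than $x|N|$ neighbours in $N$. Suppose the set $Y$ of non-heavy vertices of $M$ has size at least $(1-4y)n$. Then $Y$ is $x$-sparse to $X:=N$, and $|X|\ge 2y^4n\ge y^4 n$, which is the first outcome. So we may assume at least $(4y-2y^3)n\ge 3yn$ vertices of $M$ are heavy.

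\textbf{Step 2: building the comb greedily.} We choose heavy vertices $a_1,a_2,\dots$ together with sets $B_i\subseteq N\cap N(a_i)$. These must satisfy two conditions: each $a_i$ is anti-complete to $B_j$ for $j\ne i$, and the $B_i$ are disjoint of size at least $y^4n/\ell^2$. Since $v$ is complete to $N$ and non-adjacent to each $a_i\in M$, the vertex $v$ then has exactly the required adjacencies.

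The tool that makes this possible is sparseness. Each vertex has at most $y^3n$ neighbours, so $\ell$ chosen vertices $a_j$ together cover at most $\ell y^3 n$ vertices. Similarly, each vertex of $N$ has at most $y^3 n$ neighbours, so the set $N$ as a whole is adjacent to at most $|N|y^3n\le y^6n^2$ pairs.

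The construction proceeds as follows. First fix $\ell=\lceil y^{-1}\rceil$, which lies in $[y^{-1},x^{-2}]$ because $x\le y\le 2^{-8}$. Then pick heavy vertices $a_1,\dots,a_\ell$ one at a time, each time discarding every heavy vertex whose neighbourhood in $N$ overlaps too much with the neighbourhoods of the vertices already chosen. Counting the edges between $N$ and the heavy vertices should show that enough heavy vertices survive to continue. Finally, define
\[
B_i = \bigl(N(a_i)\cap N\bigr)\setminus \bigcup_{j\ne i} N(a_j).
\]

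\textbf{Step 3: the main obstacle.} The hard part is guaranteeing $|B_i|\ge y^4 n/\ell^2$, which requires $|N(a_i)\cap N|$ to substantially exceed $\sum_{j\ne i}|N(a_i)\cap N(a_j)\cap N|$.

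Heaviness only gives $|N(a_i)\cap N|> x|N|$, and this can be much smaller than $y^4n/\ell^2\approx y^6 n$ when $x\ll y$. The first thing I would try is to raise the threshold. Define heavy as having more than $y^{3}|N|/\ell$ neighbours in $N$, and in the easy case use the weaker conclusion that $Y$ is $y^3/\ell$-sparse to $X$, which implies $x$-sparse whenever $x\ge y^3/\ell$.

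If instead $x<y^3/\ell$, I would run a dichotomy:
\begin{itemize}
\item either most of $M$ is $x$-sparse to $N$, which gives the first outcome;
\item or many vertices of $M$ have at least $x|N|$ neighbours in $N$, in which case we take $\ell$ as large as about $x^{-2}$ and choose $a_i$ with nearly disjoint traces on $N$.
\end{itemize}
Near-disjointness can be achieved by random selection. The expected overlap is controlled by the sparseness of $N$ and $M$ relative to $y^3$. Deleting the overlaps then leaves sets of size about $x|N|/2\ge y^4n/\ell^2$, using $\ell\ge x^{-1}$ and $|N|\ge 2y^4n$.

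Balancing these exponents is where I expect most of the work.
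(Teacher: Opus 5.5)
\textbf{There is a genuine gap.} Your Step~1 is correct, but the proposal stops exactly where the lemma has content, and both fixes suggested in Step~3 fail. (The paper does not reprove this statement; it imports it from Lemma~5.2 of \cite{NSS23}.)

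Heaviness only guarantees traces $|N(a)\cap N|>x|N|$, and $|N|$ may be as small as $2y^4|G|$. To get blocks of size $y^4|G|/\ell^2$ from traces this small, you need $\ell\gtrsim x^{-1/2}$.

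\emph{Raising the threshold fails.} Taking threshold $y^3|N|/\ell$ with $\ell=\lceil y^{-1}\rceil$ does not help. Since $|N|\le y^3|G|$, it only guarantees traces of size about $y^4|N|\le y^7|G|$. That is below the required $y^4|G|/\ell^2\approx y^6|G|$.

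\emph{Random selection at large $\ell$ fails.} Your claim that overlaps are ``controlled by the sparseness'' is false. $y^3$-sparseness only says each $b\in N$ has at most $y^3|G|$ neighbours among the at least $3y|G|$ heavy vertices. So a uniformly random heavy vertex hits $b$ with probability up to $y^2/3$, and $\ell$ random teeth remove an expected fraction of order $\ell y^2$ of each trace. This is harmless only for $\ell\lesssim y^{-2}$, whereas $x^{-1/2}\gg y^{-2}$ once $x\ll y^4$.

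Concretely, sparseness allows every $b\in N$ to have about $y^3|G|$ heavy neighbours. Then with $\ell\approx x^{-1}$ random teeth, essentially every vertex of every trace is covered by another tooth, and all blocks are empty. A comb does exist in that configuration, but only with $\ell$ of order $y^{-2}$, and only because the traces there are far larger than $x|N|$.

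\emph{The greedy version of Step~2 fails too.} A block of linear size can have neighbours among all heavy vertices, so edge counting cannot keep later teeth off earlier blocks. Blocks have to be defined as private parts after all teeth are chosen.

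\textbf{What is missing.} You need an argument in which $\ell$ is not fixed in advance but adapted to the degrees $d_A(b)$ of vertices $b\in N$ into the heavy set $A$. For instance, sample teeth at rate about $1/d_A(b)$, so that $\ell\approx |A|/d_A(b)$. The bounds $d_A(b)\le y^3|G|$ and (after discarding very low-degree vertices) $d_A(b)\gtrsim x^2|A|$ are consistent with the range $[y^{-1},x^{-2}]$ in the statement.

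Such an argument must also handle non-uniformity in two ways:
\begin{itemize}
\item If at least half of $N$ has very small degree into $A$, apply the first outcome to that half $X$ instead of to $N$ (with the heaviness threshold lowered to about $x|N|/2$). Since $|X|\ge |N|/2\ge y^4|G|$, this explains why the lemma pairs ``not $2y^4$-sparse'' with $|X|\ge y^4|G|$.
\item Control heavy vertices whose traces in the relevant part of $N$ are very unequal in size. A few teeth with huge traces can carry most of the edges, so you must use actual trace sizes rather than the threshold $x|N|$.
\end{itemize}
None of this is in the proposal, so as written it does not establish the third outcome.
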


\section{Reducing the Erd\H{o}s-Hajnal property to the generalized niceness}\label{sec:iterative-sparsification-2}

    In this section, we prove Lemma \ref{lem:reduce EH to gene-nice} which says $\mathcal{F}$ has the Erd\H{o}s-Hajnal property if $\mathcal{F}$ is generalized nice and $\mathcal{F}$ is leaf-reducible and wonderful. 
    The method used in this section and next section is originated from \cite{NSS23,NSS25}.
    Here, we apply and adapt this framework to a setting that goes beyond the cases treated in their work.

    A finite set of graphs $\mathcal{F}$ is {\em generalized nice} if there exist $c_1\geq 3$, $c_2\geq 8$, $c_3,c_4,c_5,c_8>0$, and $c_6\geq 1$, $c_7\geq 4$ such that for every $\overline{\mathcal{F}}$-free graph $G$ and every $0<\epsilon<\frac{1}{2}$, either  
        \begin{itemize}
            \item $G$ has an $(\epsilon^{-1},  \epsilon^{c_1}|G|)$-blockade whose blocks are pairwise complete or weakly $\epsilon^{c_2}$-sparse; 

            \item $G$ has a clique or stable set of size at least $(\epsilon^{c_3}|G|)^{c_4}$;

            \item $G$ has a complete or anti-complete $(k,|G|/k^{c_5})$-blockade with $k\geq \epsilon^{-c_6}$; 

            \item $G$ has an $\epsilon^{c_7}$-restricted induced subgraph $S$ of size at least $\epsilon^{c_8}|G|$.  
        \end{itemize}

    We now translates the first outcome of generalized niceness to a long complete blockade, a restricted induced subgraph or an anti-complete pair. 
    
    \begin{lemma}\label{lem:main-E-graph-structure}
        Suppose that $\mathcal{F}$ is generalized nice, leaf-reducible and wonderful. 
        Then there exist constants $a_1,a_2,a_5>0$ and $a_3\geq a_4\geq 4$ such that for every $0<y<\frac{1}{2}$ and every $y$-restricted $\overline{\mathcal{F}}$-free graph $G$, one of the following holds.
        \begin{itemize}
            \item[$(1)$] $G$ has a clique or stable set of size at least $(y^{a_1}|G|)^{a_2}$. 

            \item[$(2)$] $G$ has a $y^{a_4}$-restricted induced subgraph $S$ with $|S|\geq y^{a_3}|G|$.

            \item[$(3)$] $G$ has a complete or anti-complete $(k,\frac{|G|}{k^{a_3}})$-blockade with $k\geq y^{-1}$.

            \item[$(4)$] There are disjoint $X,Y\subseteq V(G)$ such that $|X|\geq y^{a_3}|G|$, $|Y|\geq (1-a_5y)|G|$, and $Y$ is anti-complete or complete to $X$. 
            
        \end{itemize}
    \end{lemma}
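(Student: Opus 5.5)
We apply generalized niceness to $G$ with $\epsilon$ a fixed power of $y$, say $\epsilon=y^{p}$ for a large constant $p$ to be chosen. Outcomes two, three and four of generalized niceness translate directly into outcomes $(1)$, $(3)$ and $(2)$ of the lemma once $a_1,a_2,a_3,a_4$ are large enough. Concretely, a clique or stable set of size $(\epsilon^{c_3}|G|)^{c_4}$ is outcome $(1)$ with $a_1=pc_3$ and $a_2=c_4$. A complete or anti-complete $(k,|G|/k^{c_5})$-blockade with $k\ge \epsilon^{-c_6}\ge y^{-1}$ is outcome $(3)$ once $a_3\ge c_5$. An $\epsilon^{c_7}$-restricted subgraph of size $\epsilon^{c_8}|G|$ is outcome $(2)$ with $a_4\le pc_7$ and $a_3\ge pc_8$; the constraint $a_3\ge a_4$ holds if we simply take $a_3$ large. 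So the only real case is the first outcome: an $(\epsilon^{-1},\epsilon^{c_1}|G|)$-blockade $(A_1,\dots,A_\ell)$ whose blocks are pairwise complete or weakly $\epsilon^{c_2}$-sparse.

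In that case we first use Lemma \ref{lem:weakly sparse to sparse} to shrink the blocks to equal-size sets $D_i$ with $|D_i|\approx \epsilon^{c_1+1}|G|$. Pairs that were weakly $\epsilon^{c_2}$-sparse become $\epsilon^{c_2-5}$-sparse in both directions, and complete pairs stay complete. Next we want each block to be anti-connected, as wonderfulness requires. If some $D_i$ has no anti-connected component of size at least $|D_i|/k$ for a suitable $k\ge y^{-1}$, then Lemma \ref{lem:complete-blockade-or-large-anticonnected-component} gives a complete $(k,|D_i|/k^2)$-blockade inside $D_i$. Since $|D_i|\ge \epsilon^{c_1+1}|G|$ is a polynomial-in-$y$ fraction of $|G|$, choosing $k$ as a suitable power of $y^{-1}$ makes this outcome $(3)$. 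Otherwise we replace $D_i$ by a large anti-connected component, and then trim all blocks to a common size. This keeps the sparsity, at the cost of a constant factor in the sparsity parameter because the blocks lose a $1/k$ fraction.

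We now apply wonderfulness, with its constant $a$ and with $y$ replaced by some $y'$ (a power of $y$, or $y$ itself) satisfying $\ell\ge y'^{-a}$ and $\epsilon^{c_2-6}\le y'^{a}$. This is where $p$ and the condition $c_2\ge 8$ are used. If wonderfulness gives a $y'^4$-restricted subgraph of size at least $w$, we obtain outcome $(2)$. Otherwise some block $B_i$ has at most $y'|G|$ outside vertices $v$ with $|N(v)\cap B_i|\in(0,\tfrac12|B_i|)$. Every other vertex outside $V(\mathcal B)$ has either no neighbours in $B_i$ or at least half of $B_i$ as neighbours. Vertices in other blocks are complete to $B_i$ or have very few neighbours there, since the blocks are $y'^a$-sparse.

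It remains to turn this into outcome $(4)$ or outcome $(2)$, and this is the step I expect to be the main obstacle. We split $V(G)\setminus B_i$ into three sets, ignoring the few mixed vertices: $N_0$, anti-complete to $B_i$; $N_{1}$, with at least $\tfrac12|B_i|$ neighbours in $B_i$; and the vertices of the other blocks, which are complete or sparse to $B_i$. Here we use that $G$ is $y$-restricted. If $G$ is $y$-sparse, then $|N_1|\cdot\tfrac12|B_i|\le y|G||B_i|$, so $|N_1|\le 2y|G|$. The sparse-to-$B_i$ block vertices are handled by deleting the few that have neighbours in a subset, or by passing to a subset $X\subseteq B_i$ of half size via averaging. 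Then $Y=N_0\cup(\text{non-neighbours of }X)$ has size at least $(1-a_5y)|G|$ and is anti-complete to $X$, which gives outcome $(4)$. If instead $\overline G$ is $y$-sparse, we argue symmetrically. Now few vertices miss half of $B_i$, and since the blocks are anti-connected and large, $N_0$ is also small in that case. Again after an averaging step we find $X$ of size at least $y^{a_3}|G|$ to which almost everything is complete. The delicate part is the bookkeeping: ensuring the exceptional sets (the mixed vertices, at most $y'|G|$ of them, plus the sparse block vertices) have total size $O(y|G|)$. This forces $y'\le y$. We must also justify that vertices with at least $\tfrac12|B_i|$ neighbours can be made complete to a large subset $X$. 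For this I would use the complement-sparse case directly, and in the sparse case simply discard these vertices, which is legitimate because there are only $O(y|G|)$ of them.
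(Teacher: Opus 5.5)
Your treatment of the case where $G$ is $y$-sparse is essentially the paper's, but the case where $\overline G$ is $y$-sparse has a genuine gap, and ``argue symmetrically'' does not work there. Wonderfulness is one-sided. It only bounds the number of outside vertices $v$ with $|N(v)\cap B_i|\in(0,\frac12|B_i|)$, for anti-connected blocks whose pairs are complete or sparse. The complementary statement would concern vertices with $|N(v)\cap B_i|\in(\frac12|B_i|,|B_i|)$, for connected blocks whose pairs are anti-complete or sparse in $\overline G$. That would be wonderfulness of $\overline{\mathcal F}$, which is not assumed, and generalized niceness does not provide such a blockade anyway.

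When $\overline G$ is $y$-sparse, almost every vertex has more than half of $B_i$ as neighbours. Wonderfulness says nothing about these vertices, and each of them may still miss a few vertices of $B_i$. No averaging step can then give $X\subseteq B_i$ with $|X|\ge y^{a_3}|G|$ and $(1-O(y))|G|$ vertices complete to $X$. Each vertex of $B_i$ may have up to $y|G|$ non-neighbours, and if these are spread out (as in a random-like complement), the union of the non-neighbourhoods of any $t\gg y^{-1}$ vertices of $B_i$ is almost all of $V(G)$. Producing such a pair $(X,Y)$ needs structural input from $\overline{\mathcal F}$-freeness.

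That input is the hypothesis your proposal never uses: leaf-reducibility. The paper disposes of this case at the very start. If $\overline G$ is $y$-sparse, then $\overline G$ is a $y$-sparse $\mathcal F$-free graph, and Lemma \ref{lem:leaf expanding} with $b=4$ gives one of two things:
\begin{itemize}
\item disjoint $X,Y$ with $|X|\ge y^{4d+1}|G|$, $|Y|\ge(1-hy)|G|$ and $Y$ anti-complete to $X$ in $\overline G$, hence complete to $X$ in $G$; this is outcome $(4)$ with $a_5\ge h$;
\item a $y^4$-restricted induced subgraph of size at least $y^{4d+1}|G|$; this is outcome $(2)$.
\end{itemize}
This is why $a_3\ge 4d+1$ in the paper. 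Only then, with $G$ assumed $y$-sparse, does the paper apply generalized niceness with $\epsilon=y^a$ and wonderfulness with $y$ itself.

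In that remaining case your bookkeeping is more complicated than needed. The whole blockade has at most $4m\le\epsilon^{c_1-2}|G|\le y|G|$ vertices (using $c_1\ge 3$), so all block vertices can simply be discarded, with no averaging. The vertices outside the blockade that are anti-complete to $B_i$ then number at least $(1-4y)|G|$.
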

    
    \begin{proof}   
        Let $c_1\geq 3$, $c_2\geq 8$, $c_3,c_4,c_5,c_8>0$, and $c_6\geq 1$, $c_7\geq 4$ be given by the generalized niceness of $\mathcal{F}$. 
        Let $d,h>0$ be given by Lemma \ref{lem:leaf expanding} and $a\geq 6$ be given by wonderfulness of $\mathcal{F}$.  
        We claim that $a_1=ac_3$, $a_2=c_4$, $a_3=a(c_1+c_8+5)+c_5+4d+1$, $a_4=4$ and $a_5=4+h$ suffice. 

        If $|G|\leq y^{-a_3}$, then the second outcome of Lemma \ref{lem:main-E-graph-structure} holds. 
        So we may assume that $|G|\geq y^{-a_3}$. Suppose first that $\overline{G}$ is $y$-sparse. 
        Since $\mathcal{F}$ is leaf-reducible,
        it follows from Lemma \ref{lem:leaf expanding} with $b=4$ that one of the following holds.
        \begin{itemize}
            \item There are disjoint $X,Y\subseteq V(G)$ with $|X|\geq y^{4d+1}|G|$, $|Y|\geq (1-hy)|G|$, such that $Y$ is complete to $X$.
            
            \item $G$ has a $y^4$-restricted induced subgraph of size at least $y^{4d+1}|G|$.
        \end{itemize}
        They give the second or the fourth outcome of Lemma \ref{lem:main-E-graph-structure}. 
        So we may assume that $G$ is $y$-sparse. 
        Let $\epsilon=y^a$. 
        By the generalized niceness of $\mathcal{F}$, one of the following holds.  
        \begin{itemize}
            \item $G$ has an $(\epsilon^{-1},  \epsilon^{c_1}|G|)$-blockade whose blocks are pairwise complete or weakly $\epsilon^{c_2}$-sparse.

            \item $G$ has a clique or stable set of size at least $(\epsilon^{c_3}|G|)^{c_4}$.

            \item $G$ has a complete or anti-complete $(k,|G|/k^{c_5})$-blockade with $k\geq \epsilon^{-c_6}$.

            \item $G$ has an $\epsilon^{c_7}$-restricted induced subgraph $S$ of size at least $\epsilon^{c_8}|G|$.  
        \end{itemize}
        
        If the second bullet holds, then the first outcome of Lemma \ref{lem:main-E-graph-structure} holds since $(\epsilon^{c_3}|G|)^{c_4}\geq (y^{a_1}|G|)^{a_2}$. If the third bullet holds, then the third outcome of Lemma \ref{lem:main-E-graph-structure} holds since $\frac{|G|}{k^{c_5}}\geq  \frac{|G|}{k^{a_3}}$ and $\epsilon^{-c_6}\geq y^{-1}$. 
        If the fourth bullet holds, then the second outcome of Lemma \ref{lem:main-E-graph-structure} holds since $\epsilon^{c_7}=y^{ac_7}\leq y^{a_4}$ (as $c_7\ge 4$) and $\epsilon^{c_8}=y^{ac_8}\geq y^{a_3}$.

        So we may assume that the first bullet holds, that is, $G$ has a blockade $(A_1,\ldots,A_\ell)$ with $\ell=\lceil \epsilon^{-1}\rceil$, and each block has size at least $m= \epsilon^{c_1}|G|\geq \epsilon^{c_1}\cdot y^{-a_3}\geq \epsilon^{c_1}\cdot \epsilon^{-(c_1+5)}= \epsilon^{-5}>1$ such that the blocks are pairwise complete or weakly $\epsilon^{c_2}$-sparse.
        
        Let $J$ be the graph with vertices set $[\ell]$, and $ij\in E(J)$ if and only if $A_i$ is complete to $A_j$. 
        Note that $\epsilon=y^a\leq 2^{-6}$. 
        By Lemma \ref{lem:weakly sparse to sparse}, there is a blockade $(D_1,\ldots,D_\ell)$ with $D_i\subseteq A_i$ such that $|D_i|=\lceil\epsilon \lceil m\rceil\rceil\geq \epsilon m$ for every $i\in [\ell]$ and $D_i$ is $\epsilon^{c_2-5}$-sparse to $D_j$ for every $ij\in E(\overline{J})$.
        
        Suppose that for some $i\in[\ell]$, $D_i$ has no anti-connected components of size at least $\frac{|D_i|}{\ell}$. 
        By Lemma \ref{lem:complete-blockade-or-large-anticonnected-component}, $D_i$ contains a complete $(\ell,\frac{|D_i|}{\ell^2})$-blockade. 
        Since $\ell\geq \epsilon^{-1}= y^{-a}\geq y^{-1}$, $\frac{|D_i|}{\ell^2}=\frac{\lceil\epsilon \lceil m\rceil\rceil}{\lceil \epsilon^{-1}\rceil^2}\geq \frac{\epsilon\cdot \epsilon^{c_1}|G|}{\epsilon^{-4}}= \epsilon^{c_1+5}|G|=y^{a(c_1+5)}|G|\geq y^{a_3}|G|$. 
        This gives the third outcome of Lemma \ref{lem:main-E-graph-structure}, a contradiction. 
        So for each $i\in[\ell]$, $D_i$ has an anti-connected component of size at least $\frac{|D_i|}{\ell}$. 
        By anti-connectivity, we may take an anti-connected subset $B_i\subseteq D_i$ with $|B_i|=\lceil \frac{|D_i|}{\ell} \rceil$ for all $i\in [\ell]$. Note that
        $|B_i|\ge \frac{|D_i|}{\ell}\geq \epsilon^2 |D_i|\ge \epsilon ^3 m\ge \epsilon^{-2}>1$. 
        Since $D_i$ is $\epsilon^{c_2-5}$-sparse to $D_j$ and $|B_j|\geq \epsilon^2 |D_j|$, $B_i$ is $\epsilon^{c_2-7}$-sparse to $B_j$ for all $ij\in E(\overline{J})$.

        Let $B=V(G)\setminus(\bigcup_{i\in[\ell]}B_i)$. 
        Since $\mathcal{F}$ is wonderful and $\epsilon^{c_2-7}= y^{a(c_2-7)}\leq y^a$ (as $c_2\ge 8$), one of the following holds. 
        \begin{itemize}
            
            \item $G$ has a $y^{4}$-restricted induced subgraph of size at least $\epsilon^3m$.  
            
            \item There exists $i\in[\ell]$, such that there are at most $y|G|$ vertices $v\in V(G)\setminus V(\mathcal{B})$ with $|N(v)\cap B_i|\in(0,\frac{1}{2}|B_i|)$.
            
        \end{itemize}
        If $G$ has a $y^{4}$-restricted induced subgraph of size at least $\epsilon^3m\geq y^{a(c_1+3)}|G|$, then the second outcome of Lemma \ref{lem:main-E-graph-structure} holds. 
        So we may assume that there exists $i\in[\ell]$ such that there are at most $y|G|$ vertices $v\in V(G)\setminus V(\mathcal{B})$ such that $|N(v)\cap B_i|\in(0,\frac{1}{2}|B_i|)$. 
        
        Since $G$ is $y$-sparse, there are at most $2y|G|$ vertices in $B$ that has at least $\frac{|B_i|}{2}$ neighbors in $B_i$.  
        Since $m>1$, $|D_i|\le \lceil m \rceil\le 2m$. 
        Since $\frac{|D_i|}{\ell}>1$, $|B_i|\le 2\frac{|D_i|}{\ell}$. This implies that 
        \begin{align*}
            \sum_{i=1}^{\ell}|B_i| & \le \ell\cdot \frac{4m}{\ell} =4m\\
            &  \leq \epsilon^{-2}\cdot \epsilon^{c_1}|G| & (0<\epsilon<1/2)\\
            &\le \epsilon |G| & (c_1\ge 3)\\
            & \le y|G|.          & (\epsilon \le y)  
        \end{align*}
        Let $Y$ be the set of vertices in $D$ that is anti-complete to $D_i$.
        Then $|Y|\geq |G|-\sum_{i=1}^{\ell}|B_i|-y|G|-2y|G|\geq (1-4y)|G|$.
        Since $|B_i|\geq \epsilon^{3}m= \epsilon^{c_1+3}|G|=y^{a(c_1+3)}|G|\geq y^{a_3}|G|$, $B_i$ and $Y$ satisfy the fourth outcome of Lemma \ref{lem:main-E-graph-structure}. 
        
        This complete the proof of Lemma \ref{lem:main-E-graph-structure}. 
    \end{proof}
   
    We then present a lemma to translate the fourth outcome of Lemma \ref{lem:main-E-graph-structure} to an anti-complete or complete blockade. 

    \begin{lemma}\label{lem:main-once-a-block-turn2}
        Suppose that $\mathcal{F}$ is generalized nice, leaf-reducible and wonderful. Then
        there exist constants $c\in (0,\frac{1}{2}),c_1,c_2>0$ and $c_3\geq c_4\geq 4$ such that for $y\in (0,c]$ and every $cy$-restricted $\overline{\mathcal{F}}$-free graph $G$, one of the following holds.
        \begin{itemize}
            \item[$(1)$] $G$ has a clique or stable set of size at least $(y^{c_1}|G|)^{c_2}$.
            
            \item[$(2)$] $G$ has a complete or anti-complete $(k,\frac{|G|}{k^{c_3}})$-blockade with $k\geq y^{-1}$.
            
            \item[$(3)$] $G$ has a $y^{c_4}$-restricted induced subgraph $S$ with $|S|\geq y^{c_3}|G|$.
            
        \end{itemize}
    \end{lemma}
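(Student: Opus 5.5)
The plan is to convert outcome (4) of Lemma \ref{lem:main-E-graph-structure} into a long complete or anti-complete blockade by iterating it with Lemma \ref{lem:get a blockade by iteration}, used in its ``anti-complete'' version. Let $a_1,a_2,a_5>0$ and $a_3\ge a_4\ge 4$ be the constants given by Lemma \ref{lem:main-E-graph-structure}. Set $b=a_5$ and $c_0=2^{-4b}$, and take $c\le c_0$ small (in particular $c<\frac12$). Fix $y\in(0,c]$ and a $cy$-restricted $\overline{\mathcal F}$-free graph $G$. If $|G|<y^{-(a_3+2)}$, then a single vertex already gives outcome (3), provided $c_3\ge a_3+2$. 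So I assume $|G|\ge y^{-(a_3+2)}$.

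The key step is to apply Lemma \ref{lem:main-E-graph-structure} to every induced subgraph $F$ of $G$ with $|F|\ge c_0|G|$, using the parameter $y$.

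\begin{itemize}
\item \textbf{Restriction passes to $F$.} Since $G$ is $cy$-restricted and $|F|\ge c_0|G|$, each vertex has at most $cy|G|\le (c/c_0)\,y|F|\le y|F|$ neighbours (or non-neighbours) in $F$. Hence $F$ is $y$-restricted, as long as $c\le c_0$.
\item \textbf{Outcome (1).} A clique or stable set of size $(y^{a_1}|F|)^{a_2}\ge(y^{a_1}c_0|G|)^{a_2}$ gives outcome (1) for $G$, after absorbing $c_0$ into a power of $y$ (using $y\le c$).
\item \textbf{Outcome (3).} A complete or anti-complete $(k,|F|/k^{a_3})$-blockade with $k\ge y^{-1}$ gives outcome (2) for $G$, since $|F|\ge c_0|G|\ge y^{\text{const}}|G|$ and $k^{-1}\le y$ let us absorb $c_0$ into a slightly larger exponent of $k$.
\item \textbf{Outcome (2).} A $y^{a_4}$-restricted set of size $y^{a_3}|F|$ gives outcome (3) for $G$, with $c_4=a_4$ and $c_3$ larger than $a_3$ plus a constant.
\item \textbf{Outcome (4).} This gives disjoint $X,Y\subseteq V(F)$ with $|X|\ge y^{a_3}|F|$, $|Y|\ge(1-by)|F|$, and $Y$ complete or anti-complete to $X$.
\end{itemize}

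So unless one of outcomes (1)--(3) of the present lemma already holds, every such $F$ falls into outcome (4). The hypothesis of Lemma \ref{lem:get a blockade by iteration} (anti-complete version, with $a=a_3$) is then satisfied. That lemma yields a complete or anti-complete $(y^{-1},y^{a_3+2}|G|)$-blockade. With $k=\lceil y^{-1}\rceil$ and $c_3\ge a_3+3$, the block size satisfies $y^{a_3+2}|G|\ge |G|/k^{c_3}$, which is outcome (2).

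The main obstacle is the dichotomy ``complete or anti-complete'' for different $F$. Lemma \ref{lem:get a blockade by iteration} as stated has a single relation, either $x$-sparse-or-complete or anti-complete-or-complete. I would check that its proof handles a mixed choice at each step. It does: it records, for each index, whether later blocks are anti-complete or complete, and then pigeonholes. Hence the proof carries over verbatim.

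The remaining work is bookkeeping of constants: choose $c_1,c_2$ from $a_1,a_2$ (e.g.\ $c_1=a_1+\log_{y}c_0$-type absorption via $c\le c_0$), $c_4=4$ (since $y^{a_4}\le y^4$), and $c_3=\max(a_3+3,a_3+\text{const})$, which ensures $c_3\ge c_4$.
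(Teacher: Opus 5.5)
Your proposal is correct and follows the paper's proof essentially step for step. The paper takes $c=2^{-4a_5}$, $c_1=a_1+1$, $c_2=a_2$, $c_3=a_3+2$, $c_4=a_4$, handles $|G|\le y^{-c_3}$ with a single vertex, and then splits into two cases: either every $F$ with $|F|\ge c|G|$ has the $(X,Y)$ pair, in which case Lemma~\ref{lem:get a blockade by iteration} (anti-complete version, $a=a_3$, $b=a_5$) applies, or some $y$-restricted $F$ lacks it and Lemma~\ref{lem:main-E-graph-structure} is applied to $F$. Two small clean-ups: since $c_0\ge c\ge y$, the absorption is just $c_1=a_1+1$, a $y$-independent constant rather than a $\log_y c_0$ term; and the per-$F$ choice of complete versus anti-complete is already part of the statement of that iteration lemma, so the extra check you flag is not needed.
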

    
    \begin{proof}
        Let $a_1,a_2,a_5>0$ and $a_3\geq a_4\geq 4$ be given by Lemma \ref{lem:main-E-graph-structure}. 
        We claim that $c=2^{-4a_5},c_1=a_1+1$, $c_2=a_2$, $c_3=a_3+2$ and $c_4=a_4$ suffice.
        If $|G|\leq y^{-c_3}$, then the third outcome of Lemma \ref{lem:main-once-a-block-turn2} holds. 
        So we may assume that $|G|\geq y^{-c_3}$. 
        
        Suppose  first that for every induced subgraph $F$ of $G$ with $|F|\geq c|G|$, there are disjoint $X,Y\subseteq V(F)$ such that $|X|\geq y^{a_3}|F|$, $|Y|\geq (1-a_5y)|F|$, and $Y$ is anti-complete or complete to $X$.
        By Lemma \ref{lem:get a blockade by iteration} with $a=a_3$, $b=a_5$, and ``$x$-sparse" replaced by ``anti-complete", there is an anti-complete or complete $(y^{-1},y^{a_3+2}|G|)$-blockade in $G$, which gives the second outcome of Lemma \ref{lem:main-once-a-block-turn2}.
        
        Therefore, we may assume that there is an induced subgraph $F$ of $G$ with $|F|\geq c|G|$ such that there is no disjoint $X,Y\subseteq V(F)$ such that $|X|\geq y^{a_3}|F|$, $|Y|\geq (1-a_5y)|F|$, and $Y$ is anti-complete or complete to $X$.
        
        Since $G$ is $cy$-restricted, $F$ is $y$-restricted. 
        By Lemma \ref{lem:main-E-graph-structure} with $G$ replaced by $F$, one of the following holds. 
        \begin{itemize}
            \item $F$ has a clique or stable set of size at least $(y^{a_1}|F|)^{a_2}$.

            \item $F$ has a $y^{a_4}$-restricted induced subgraph $S$ with $|S|\geq y^{a_3}|F|$.

            \item $F$ has a complete or anti-complete $(k,\frac{|F|}{k^{a_3}})$-blockade with $k\geq y^{-1}$. 
        \end{itemize}
        
        If the first bullet holds, then the first outcome of Lemma \ref{lem:main-once-a-block-turn2} holds since $(y^{a_1}|F|)^{a_2}\geq (y^{a_1+1}|G|)^{a_2}=(y^{c_1}|G|)^{c_2}$. 
        If the second bullet holds, then the third outcome of Lemma \ref{lem:main-once-a-block-turn2} holds since $y^{a_3}|F|\geq y^{a_3+1}|G|\geq y^{c_3}|G|$ and $a_4=c_4$. 
        If the third bullet holds, then the second outcome of Lemma \ref{lem:main-once-a-block-turn2} holds since $\frac{|F|}{k^{a_3}}\geq \frac{|G|}{y^{-1}\cdot k^{a_3}}\geq \frac{|G|}{k^{a_3+1}}\geq \frac{|G|}{k^{c_3}}$.
        
        This complete the proof of Lemma \ref{lem:main-once-a-block-turn2}
    \end{proof}
    We then use the method of iterative sparsification to decrease the sparse condition of Lemma \ref{lem:main-once-a-block-turn2} to a constant sparse condition. 
    \begin{lemma}\label{lem:main-iterative-sparsification-turn2}
        Suppose that $\mathcal{F}$ is generalized nice, leaf-reducible and wonderful. Then
        there exist constants $c\in (0,\frac{1}{2}), a_1\geq 1,a_2>0$ such that for every $x\in (0,c^2)$ and every $c^2$-restricted $\overline{\mathcal{F}}$-free graph $G$, one of the following holds.
        \begin{itemize}
            \item[$(1)$] $G$ has an $x$-restricted induced subgraph $S$ with $|S|\geq x^{a_1}|G|$.
            
            \item[$(2)$] $G$ has a clique or stable set of size at least $(x^{a_1}|G|)^{a_2}$.
            
            \item[$(3)$] $G$ has a complete or anti-complete $(k,\frac{|G|}{k^{a_1}})$-blockade with $k\geq 2$.
        \end{itemize}
    \end{lemma}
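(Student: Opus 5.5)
We will derive the statement from Lemma \ref{lem:main-once-a-block-turn2} by the iterative sparsification of Lemma \ref{lem:give a more sparse subgraph 2}, in its version with ``sparse'' replaced by ``restricted''. Let $c\in(0,\frac12)$, $c_1,c_2>0$ and $c_3\ge c_4\ge 4$ be the constants of Lemma \ref{lem:main-once-a-block-turn2}; we use the same $c$ here. Fix $x\in(0,c^2)$ and a $c^2$-restricted $\overline{\mathcal F}$-free graph $G$. We assume that outcomes (2) and (3) fail for $G$, with $a_1,a_2$ to be chosen large (resp.\ small) depending only on $c,c_1,\dots,c_4$, and aim to produce outcome (1).

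The plan is to apply Lemma \ref{lem:give a more sparse subgraph 2} (restricted version) with the parameter $c^2$ in place of its $c$. We take $b_1=c_4/2\ge 2>1$ and $b_3=2c_3$, and choose $b_2$ large enough that $b_1b_2\ge b_2+b_3$; for instance $b_2=2c_3$ works because $b_1\ge 2$. The first hypothesis holds trivially, since $G$ itself is $c^2$-restricted. For the second hypothesis, let $\lambda\in[x,c^2]$ and let $F$ be a $\lambda$-restricted induced subgraph of $G$ with $|F|\ge\lambda^{b_2}|G|$. Put $y=\lambda/c$. Then $y\le c$ and $F$ is $cy$-restricted, so Lemma \ref{lem:main-once-a-block-turn2} applies to $F$, and each of its three outcomes has to be handled.

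If outcome (3) holds, $F$ contains a $y^{c_4}$-restricted set of size at least $y^{c_3}|F|$. Since $y=\lambda/c$ and $c$ is a constant, we have $y^{c_4}\le\lambda^{c_4/2}=\lambda^{b_1}$ once $\lambda\le c^2$, because then $\lambda^{c_4/2}\ge (\lambda/c)^{c_4}$. Also $y^{c_3}\ge\lambda^{c_3}\ge\lambda^{b_3}$. This is exactly what the second hypothesis requires. If outcome (1) holds, $F$ has a clique or stable set of size $(y^{c_1}|F|)^{c_2}\ge(\lambda^{c_1+b_2}|G|)^{c_2}\ge(x^{a_1}|G|)^{a_2}$, provided $a_1\ge c_1+b_2$ and $a_2\le c_2$. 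That gives outcome (2) for $G$, which we assumed fails. If outcome (2) holds, $F$ has a complete or anti-complete $(k,|F|/k^{c_3})$-blockade with $k\ge y^{-1}$. Since $k\ge y^{-1}\ge\lambda^{-1}$, we get $|F|\ge\lambda^{b_2}|G|\ge k^{-b_2}|G|$, so the blocks have size at least $|G|/k^{c_3+b_2}$. This gives outcome (3) for $G$ with $k\ge 2$, provided $a_1\ge c_3+b_2$, again a contradiction.

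So the second hypothesis holds for every $\lambda$, and Lemma \ref{lem:give a more sparse subgraph 2} yields an $x$-restricted induced subgraph of size at least $x^{b_1b_2}|G|$. Taking $a_1\ge\max\{b_1b_2,\,c_1+b_2,\,c_3+b_2,\,1\}$ gives outcome (1). The main obstacle is the bookkeeping in the first case: the mismatch between $\lambda$ and $y=\lambda/c$ must be absorbed. This is why we halve the exponent (setting $b_1=c_4/2$), and it needs $c_4\ge 4$ to keep $b_1>1$. A second point to check is that the restricted version of Lemma \ref{lem:give a more sparse subgraph 2} tolerates $F$ being dense rather than sparse. Here we rely on the remark following that lemma. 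If that fails, we would pass to complements, using that $\overline{\mathcal F}$-freeness is not symmetric but Lemma \ref{lem:main-once-a-block-turn2} already handles restricted (dense or sparse) inputs.
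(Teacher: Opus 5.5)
Your proposal follows the paper's proof: the same constant $c$, the restricted form of Lemma \ref{lem:give a more sparse subgraph 2} with $c^2$ in place of $c$, $y=\lambda/c$ fed into Lemma \ref{lem:main-once-a-block-turn2}, and $b_1=c_4/2$. The only difference is your choice $b_2=b_3=2c_3$ instead of the paper's $b_2=4c_3/c_4$, $b_3=c_3$, which is cosmetic. Your handling of the restricted-subgraph and clique/stable-set cases is correct.

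There is one wrong inequality, in the blockade case. You write $k\ge y^{-1}\ge\lambda^{-1}$, but this is backwards. Since $y=\lambda/c$ and $c<1$, we have $y>\lambda$, so $y^{-1}=c\lambda^{-1}<\lambda^{-1}$. The lemma only gives $k\ge c\lambda^{-1}$, so $\lambda^{b_2}\ge k^{-b_2}$ can fail.

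The repair is easy. Since $\lambda\le c^2$ you have $y\le c$, hence $\lambda=cy\ge y^2\ge k^{-2}$. This gives $|F|\ge\lambda^{b_2}|G|\ge k^{-2b_2}|G|$, so the blocks have size at least $|G|/k^{c_3+2b_2}$. Then take $a_1\ge\max\{c_3c_4,\;c_1+b_2,\;c_3+2b_2\}$. This is exactly how the paper handles the case, via $(cy)^{4c_3/c_4}\ge (y^2)^{4c_3/c_4}=y^{8c_3/c_4}$.

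Also take $a_2=c_2$ rather than merely $a_2\le c_2$. Otherwise the comparison $(\cdot)^{c_2}\ge(\cdot)^{a_2}$ needs the base to be at least $1$. With these adjustments your argument is complete.
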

    
    \begin{proof}
        Let $c\in (0,\frac{1}{2}),c_1,c_2>0$ and $c_3\geq c_4\geq 4$ be given by Lemma \ref{lem:main-once-a-block-turn2}. 
        We claim that $c, a_1=c_1+3c_3$ and $a_2=c_2$ suffice. 
        Suppose that none of outcomes of Lemma \ref{lem:main-iterative-sparsification-turn2} holds. 
        We first present a claim to give the condition of Lemma         \ref{lem:give a more sparse subgraph 2}. 
        
        \begin{claim}\label{clm:condition of lemma-give a more sparse subgraph}
            For every $y$ with $cy\in[x,c^2]$ and every $cy$-restricted induced subgraph $F$ of $G$ with $|F|\geq (cy)^{\frac{4c_3}{c_4}}|G|$, there is a $(cy)^{\frac{c_4}{2}}$-restricted induced subgraph of $F$ with at least $(cy)^{c_3}|F|$ vertices. 
        \end{claim}
        \begin{subproof}[Proof of Claim \ref{clm:condition of lemma-give a more sparse subgraph}]
            By Lemma \ref{lem:main-once-a-block-turn2} with $G$ replaced by $F$, one of the following holds. 
            \begin{itemize}
            \item $F$ has a clique or stable set of size at least $(y^{c_1}|F|)^{c_2}$.
            
            \item $F$ has a complete or anti-complete $(k,\frac{|F|}{k^{c_3}})$-blockade with $k\geq y^{-1}$.

            \item $F$ has a $y^{c_4}$-sparse induced subgraph $S$ with $|S|\geq y^{c_3}|F|$. 
         
        \end{itemize}
        
        If the first bullet holds, then the second outcome of Lemma \ref{lem:main-iterative-sparsification-turn2} holds since $(y^{c_1}|F|)^{c_2}
        \geq (y^{c_1}\cdot (cy)^{\frac{4c_3}{c_4}}|G|)^{c_2}
        \geq (y^{c_1}\cdot (y^2)^{\frac{4c_3}{c_4}}|G|)^{c_2}
        = (y^{c_1+\frac{8c_3}{c_4}}|G|)^{c_2}
        \geq (x^{c_1+\frac{8c_3}{c_4}}|G|)^{c_2}
        \geq (x^{a_1}|G|)^{a_2}$, where the last inequality is due to $c_4\ge 4$. 

        If the second bullets holds, then the third outcome of Lemma \ref{lem:main-iterative-sparsification-turn2} holds since $\frac{|F|}{k^{c_3}}\geq \frac{y^{\frac{8c_3}{c_4}}|G|}{k^{c_3}}\geq \frac{|G|}{k^{c_3+\frac{8c_3}{c_4}}}\geq \frac{|G|}{k^{a_1}}$.

        So we may assume that the third outcome holds. Since  
        $y^{c_4}=(y^2)^{\frac{c_4}{2}}\leq (cy)^{\frac{c_4}{2}}$
        and $y^{c_3}\ge (cy)^{c_3}$,
        Claim \ref{clm:condition of lemma-give a more sparse subgraph} follows. 
        \end{subproof}

        Let $b_1=\frac{c_4}{2},b_2=\frac{4c_3}{c_4},b_3=c_3$. 
        Since $c_4\geq 4$, $b_1b_2=2c_3\geq c_3+\frac{4c_3}{c_4}=b_2+b_3$.   
        By Claim \ref{clm:condition of lemma-give a more sparse subgraph} and Lemma \ref{lem:give a more sparse subgraph 2} (with $b_1,b_2,b_3$ and $\lambda$ replaced by $cy$, $c$ replaced by $c^2$, sparse replaced by restricted), $G$ has an $x$-restricted induced subgraph with at least $x^{2c_3}|G|$ vertices. 
        Since $x^{2c_3}\geq x^{a_1}$, the first outcome of Lemma \ref{lem:main-iterative-sparsification-turn2} holds. 

        This completes the proof of Lemma \ref{lem:main-iterative-sparsification-turn2}. 
    \end{proof}
    We then present a lemma to obtain the polynomial R\"odl property.  
 
    \begin{lemma}\label{lem:main-rodl-turn2}
        Suppose that $\mathcal{F}$ is generalized nice, leaf-reducible and wonderful. Then
        there exist constants $c_1\geq 1,c_2>0$ such that for every $x\in(0,\frac{1}{2})$ and for every $\overline{\mathcal{F}}$-free graph $G$, one of the following holds. 
        \begin{itemize}
            \item[$(1)$] $G$ has an $x$-restricted induced subgraph $S$ with $|S|\geq x^{c_1}|G|$. 
            
            \item[$(2)$] $G$ has a complete or anti-complete $(k,|G|/k^{c_1})$-blockade with $k\geq 2$.
            
            \item[$(3)$] $G$ has a clique or stable set of size at least $(x^{c_1}|G|)^{c_2}$.
        \end{itemize}
    \end{lemma}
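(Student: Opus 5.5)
We will pass from the $c^2$-restricted hypothesis in Lemma \ref{lem:main-iterative-sparsification-turn2} to an arbitrary $\overline{\mathcal{F}}$-free graph by applying R\"odl's theorem in its polynomial-free form, once, at the fixed constant $c^2$. Let $c\in(0,\frac12)$, $a_1\ge 1$, $a_2>0$ be given by Lemma \ref{lem:main-iterative-sparsification-turn2}. Since $\mathcal{F}$ is finite and nonempty, $\overline{\mathcal{F}}$ contains some graph $\overline{H}$. By Theorem \ref{thm:Rodl} applied with $\epsilon=c^2$, there is $\delta=\delta(c)>0$, depending only on $\mathcal{F}$, such that every $\overline{\mathcal{F}}$-free (hence $\overline{H}$-free) graph $G$ has a $c^2$-restricted induced subgraph $G'$ with $|G'|\ge \delta|G|$. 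The graph $G'$ is again $\overline{\mathcal{F}}$-free.

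We distinguish two ranges of $x$. Suppose first that $x\in[c^2,\frac12)$. Then $G'$ is itself $x$-restricted, and $|G'|\ge\delta|G|\ge x^{c_1}|G|$ as soon as $c_1\ge \log\delta/\log(1/2)$, since $x<\frac12$. This gives outcome (1).

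Suppose now that $x<c^2$. Apply Lemma \ref{lem:main-iterative-sparsification-turn2} to $G'$, and translate each of its outcomes.
\begin{itemize}
\item[(a)] An $x$-restricted set of size at least $x^{a_1}|G'|\ge \delta x^{a_1}|G|$ is at least $x^{c_1}|G|$ once $c_1\ge a_1+\log\delta/\log(1/2)$, because $\delta\ge (1/2)^{\log\delta/\log(1/2)}\ge x^{\log\delta/\log(1/2)}$.
\item[(b)] A clique or stable set of size at least $(x^{a_1}|G'|)^{a_2}\ge(\delta x^{a_1}|G|)^{a_2}\ge(x^{c_1}|G|)^{a_2}$ holds with the same choice of $c_1$, so we take $c_2=a_2$.
\item[(c)] A complete or anti-complete $(k,|G'|/k^{a_1})$-blockade with $k\ge2$ has blocks of size at least $\delta|G|/k^{a_1}$. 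Since $k\ge 2$, we have $\delta\ge k^{-\log_2(1/\delta)}$, so the blocks have size at least $|G|/k^{a_1+\log_2(1/\delta)}$. This is outcome (2) with $c_1\ge a_1+\log_2(1/\delta)$.
\end{itemize}
Taking $c_1=\max\{1,\,a_1+\log_2(1/\delta)\}$ and $c_2=a_2$ then covers all cases.

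The only delicate point is that $\delta$ from Theorem \ref{thm:Rodl} may be tower-type small, which is harmless because $\delta$ is a constant independent of $x$. This is why we apply R\"odl only at the fixed threshold $c^2$ and let Lemma \ref{lem:main-iterative-sparsification-turn2} handle all $x<c^2$ polynomially. The main obstacle is thus purely bookkeeping: each loss of a constant factor $\delta$ must be absorbed into a polynomial in $x$ (using $x<\frac12$) or in $k$ (using $k\ge2$).
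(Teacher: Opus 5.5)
Your proof is correct and follows essentially the same route as the paper: apply R\"odl's theorem once at the fixed threshold $c^2$ to get a constant $\delta$, handle $x\ge c^2$ directly, and for $x<c^2$ apply Lemma~\ref{lem:main-iterative-sparsification-turn2} to the $c^2$-restricted subgraph, absorbing the factor $\delta$ into a power of $x$ or of $k$. The only difference is cosmetic. You absorb $\delta$ using $x<\frac12$ and $k\ge 2$, with the single choice $c_1=\max\{1,a_1+\log_2(1/\delta)\}$. The paper instead imposes $\delta\ge (c^2)^{c_1/2}$, $\delta\ge 2^{a_1-c_1}$ and $c_1\ge 2a_1$.
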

    
    \begin{proof}
        Let $c\in (0,\frac{1}{2}),a_1\geq1$ and $a_2>0$ be given by Lemma \ref{lem:main-iterative-sparsification-turn2}. 
        Let $\xi=c^2$. 
        By Theorem \ref{thm:Rodl} (with $\epsilon=\xi$), there exists $\delta>0$ such that every $\overline{\mathcal{F}}$-free graph $G$ has a $\xi$-restricted induced subgraph of size at least $\delta|G|$. 
        Let $c_2=a_2$ and $c_1$ be sufficiently large such that $\delta\geq \xi^{\frac{c_1}{2}}$, $\delta\geq 2^{a_1-c_1}$ and $c_1\geq 2a_1$. 
        In the following we prove that $c_1,c_2$ suffice. 
        
        By Theorem \ref{thm:Rodl}, $G$ has a $\xi$-restricted induced subgraph $F$ with $|F|\geq \delta|G|$. 
        If $x\geq \xi$, then $F$ is $x$-restricted with $|F|\geq \delta|G|\geq 2^{a_1-c_1}|G|\geq (x^{-1})^{a_1-c_1}|G|= x^{c_1-a_1}|G|\geq x^{c_1}|G|$, which gives the first outcome of Lemma \ref{lem:main-rodl-turn2}. 
        So we may assume that $x<\xi=c^2$. 
        By Lemma \ref{lem:main-iterative-sparsification-turn2} with $G$ replaced by $F$, one of the following holds. 
        \begin{itemize}
            \item $F$ has an $x$-restricted induced subgraph $S$ with $|S|\geq x^{a_1}|F|$. 
            
            \item $F$ has a clique or stable set of size at least $(x^{a_1}|F|)^{a_2}$.
            
            \item $F$ has a complete or anti-complete $(k,\frac{|F|}{k^{a_1}})$-blockade with $k\geq 2$. 
        \end{itemize}
        Note that 
        \begin{align*}
        x^{a_1}|F| & \geq x^{a_1}\delta|G| & (|F|\geq \delta|G|) \\
            & \geq x^{a_1}\xi^{\frac{c_1}{2}}|G| & (\delta\geq \xi^{\frac{c_1}{2}})\\
            & \geq x^{a_1+\frac{c_1}{2}}|G| & (\xi> x)\\
            & \geq x^{c_1}|G|. & (2a_1\leq c_1) 
        \end{align*}
        
        If the first bullet holds, then since $|S|\geq x^{a_1}|F|\geq x^{c_1}|G|$, the first outcome of Lemma \ref{lem:main-rodl-turn2} holds. 
        If the second bullet holds, then since $G$ has a clique or stable set of size at least $(x^{a_1}|F|)^{a_2}\geq (x^{c_1}|G|)^{c_2}$, the third outcome of Lemma \ref{lem:main-rodl-turn2} holds. 
        So we may assume that the third bullet holds.
        Since $\delta\geq 2^{a_1-c_1}$ and $k\geq 2$, we have $\frac{|F|}{k^{a_1}}\geq \frac{\delta|G|}{k^{a_1}}\geq \frac{2^{a_1-c_1}|G|}{k^{a_1}}\geq \frac{|G|}{k^{c_1}}$. 
        Therefore, the second outcome of Lemma \ref{lem:main-rodl-turn2} holds. 
        This complete the proof of Lemma \ref{lem:main-rodl-turn2}. 
    \end{proof}
    
    Now we are ready to prove Lemma \ref{lem:reduce EH to gene-nice}, which we restate here for readers' convenience. 

    \begin{lemma}\label{lem:reduce EH to gene-nice 2}
        Let $\mathcal{F}$ be a finite class of graphs that is leaf-reducible and wonderful. If $\mathcal{F}$ is generalized nice, then $\mathcal{F}$ has the Erd\H{o}s-Hajnal property. 
    \end{lemma}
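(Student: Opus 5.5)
The plan is to derive the polynomial R\"odl property for $\overline{\mathcal{F}}$ from Lemma \ref{lem:main-rodl-turn2}, and then conclude by Theorem \ref{thm:E-H-poly-Rodl-viral-equivalent}. Note that $\mathcal{F}$ has the Erd\H{o}s-Hajnal property iff $\overline{\mathcal{F}}$ does. Lemma \ref{lem:main-rodl-turn2} applies to $\overline{\mathcal{F}}$-free graphs, so it suffices to show that $\overline{\mathcal{F}}$ has the polynomial R\"odl property. Concretely, we want a $d$ such that for every $\epsilon\in(0,\frac12)$, every $\overline{\mathcal{F}}$-free $G$ has an $\epsilon$-restricted induced subgraph of size at least $\epsilon^d|G|$.

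The key steps are as follows.

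\begin{enumerate}
\item Fix $c_1\ge 1$ and $c_2>0$ from Lemma \ref{lem:main-rodl-turn2}, and put $x=\epsilon$. Every induced subgraph $F$ of an $\overline{\mathcal{F}}$-free $G$ is again $\overline{\mathcal{F}}$-free, so Lemma \ref{lem:main-rodl-turn2} applies to each such $F$.

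\item Handle the clique outcome. A clique or stable set $Q$ is $0$-restricted. If $|Q|\ge (\epsilon^{c_1}|F|)^{c_2}$ is at least $\epsilon^{C}|F|$ for a suitable $C$, we are done. This is not automatic when $c_2<1$. I would therefore first run the Erd\H{o}s-Hajnal-type bound directly: apply Lemma \ref{lem:main-rodl-turn2} with $x$ chosen as a small power of $|G|^{-1}$ and iterate via the blockade outcome. Alternatively, and more cleanly, I would use Lemma \ref{lem:give a poly-rodl subgraph}. Its hypothesis asks, for every large induced subgraph $F$, for a complete or anti-complete $(k,|F|/k^{a})$-blockade with $k\in[2,\epsilon^{-1}]$.

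\item Obtain that blockade hypothesis. For each $F$ with $|F|\ge\epsilon^{2a}|G|$, Lemma \ref{lem:main-rodl-turn2} gives one of three outcomes, and each is converted into a blockade or an early finish:
\begin{itemize}
\item a blockade with $k\ge 2$: if $k>\epsilon^{-1}$, merge or truncate to $k'=\lceil\epsilon^{-1}\rceil$ blocks, which only helps since $|F|/k^{c_1}\le|F|/k'^{c_1}$;
\item an $\epsilon$-restricted subgraph of size at least $\epsilon^{c_1}|F|\ge \epsilon^{c_1+2a}|G|$, which finishes directly;
\item a clique or stable set of size at least $(\epsilon^{c_1}|F|)^{c_2}$: if this is at least $2$, split it into two halves, giving a complete or anti-complete $(2,\cdot)$-blockade. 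The blocks are only of size about $(\epsilon^{c_1}|F|)^{c_2}/2$, which is not linear in $|F|$.
\end{itemize}
\end{enumerate}

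The main obstacle is the third case: the clique outcome gives only polynomial size, not a linear-sized blockade. The remedy is to finish via the Erd\H{o}s-Hajnal property directly rather than through R\"odl. Suppose a clique or stable set of size at least $|G|^{c'}$ is needed. Take $x=|G|^{-\eta}$ with $\eta$ small. Then outcomes (1) and (3) of Lemma \ref{lem:main-rodl-turn2} give an $x$-restricted set of size at least $|G|^{1-c_1\eta}$, which by Tur\'an contains a clique or stable set of size about $x^{-1}/2=|G|^{\eta}/2$, or directly give size $(|G|^{1-c_1\eta})^{c_2}$. For outcome (2), a complete or anti-complete $(k,|G|/k^{c_1})$-blockade, induct on $|G|$. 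If $f(n)=n^{c'}$ for the blocks, the combined bound is $k\cdot(|G|/k^{c_1})^{c'}\ge |G|^{c'}$ whenever $c'c_1\le 1$ (complete blockade: take cliques in each block and union; anti-complete: stable sets). Choosing $c'=\min\{\eta,(1-c_1\eta)c_2,1/c_1\}$, the induction on $|G|$ closes, with small $|G|$ absorbed by the constant. This proves the Erd\H{o}s-Hajnal property outright; I would present this induction as the proof.
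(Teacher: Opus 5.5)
Your final argument, the induction on $|G|$, has a genuine gap in the blockade step. The inductive hypothesis you carry is only that every smaller $\overline{\mathcal{F}}$-free graph has a clique \emph{or} a stable set of size $n^{c'}$.

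When outcome (2) of Lemma~\ref{lem:main-rodl-turn2} gives a complete blockade $(B_1,\ldots,B_k)$, you write ``take cliques in each block and union''. But the hypothesis may give you a large stable set, not a clique, in some or all of the $B_j$. Stable sets in different blocks of a complete blockade cannot be combined, so you only get about $(|G|/k^{c_1})^{c'}<|G|^{c'}$, and the induction does not close. Splitting the blocks by majority type does not help either: for $k=2$ it loses a factor you cannot recover. The anti-complete case fails symmetrically.

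The standard repair is to induct on the stronger statement $\omega(G)\alpha(G)\ge |G|^{c'}$.
\begin{itemize}
\item For a complete blockade, $\omega(G)\ge\sum_j\omega(B_j)$ and $\alpha(G)\ge\max_j\alpha(B_j)$. Hence $\omega(G)\alpha(G)\ge\sum_j\omega(B_j)\alpha(B_j)\ge k(|G|/k^{c_1})^{c'}\ge|G|^{c'}$ when $c'c_1\le 1$. The anti-complete case is symmetric.
\item Outcomes (1) and (3) directly give $\omega(G)\alpha(G)\ge\max\{\omega(G),\alpha(G)\}$ large enough.
\item At the end, $\max\{\omega(G),\alpha(G)\}\ge|G|^{c'/2}$.
\item Take $c'$ strictly below $\eta$, so that the factor $\tfrac12$ from Tur\'an is absorbed.
\end{itemize}

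With this fix your route is correct and genuinely different from the paper's. The paper does not induct. It fixes $x=|G|^{-1/(3c_1)}$ and $\epsilon=x^{1/(7c_1)}$, and assumes $G$ has no clique or stable set of size $|G|^c$. Under that assumption, every induced subgraph $F$ with $|F|\ge\epsilon^{2c_1}|G|$ must fall into the short-blockade case $k\in[2,\epsilon^{-1}]$ of Lemma~\ref{lem:main-rodl-turn2}. The other three outcomes (restricted subgraph, blockade with $k\ge\epsilon^{-1}$, and the clique outcome) each already produce a homogeneous set of size $|G|^c$. The paper then applies Lemma~\ref{lem:give a poly-rodl subgraph} to obtain an $\epsilon$-restricted subgraph of size $\epsilon^{3c_1}|G|$, and finishes with Tur\'an.

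So the obstacle you correctly identified with the clique outcome is resolved there by letting $\epsilon$ depend on $|G|$ and arguing by contradiction. That is exactly the option you sketched in your step 2 but did not carry through. The paper's route leans on the nontrivial Theorem~7.4 of Nguyen--Scott--Seymour, whereas the repaired product induction is self-contained and elementary.
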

    
    \begin{proof}
        Let $c_1\geq 1,c_2>0$ be given by Lemma \ref{lem:main-rodl-turn2}. 
        Let $q=42c_1^2$ and $m=2^q$, $c=\min\{q^{-1},\frac{c_2}{2}\}$. 
        We claim that every $\overline{\mathcal{F}}$-free graph $G$ has a clique or stable set of size at least $|G|^c$. 
        Suppose to the contrary that $G$ has no clique or stable set of size at least $|G|^c$. 
        Since every graph on two or more vertices has a clique or stable set of size 2, we may assume that $|G|>m$. 
     
        Let $$x=|G|^{-\frac{1}{3c_1}}\leq m^{-\frac{1}{3c_1}}=2^{-\frac{42c_1^2}{3c_1}}=2^{-14c_1}$$ 
        and 
        $$\epsilon=x^{\frac{1}{7c_1}}\leq 2^{-\frac{14c_1}{7c_1}}=\frac{1}{4}.$$ 
        Since $c_1\geq 1$, we have $x<\epsilon\leq\frac{1}{4}$. 
        We then present a claim to give some property of an induced subgraph of $G$ with at least $\epsilon^{2c_1}|G|$ vertices. 
        \begin{claim}\label{clm:a large F of G has a complete or anti-complete blockade with small length}
            Every induced subgraph $F$ of $G$ with $|F|\geq \epsilon^{2c_1}|G|$ has a complete or anti-complete $(k,|F|/k^{c_1})$-blockade with $k\in[2,\epsilon^{-1}]$.
        \end{claim}
        \begin{subproof}[Proof of Claim \ref{clm:a large F of G has a complete or anti-complete blockade with small length}]
            Suppose to the contrary that there is an induced subgraph $F$ of $G$ with $|F|\geq \epsilon^{2c_1}|G|$ such that $F$ has no complete or anti-complete $(k,|F|/k^{c_1})$-blockades with $k\in[2,\epsilon^{-1}]$.
            By Lemma \ref{lem:main-rodl-turn2} with $G$ replaced by $F$, one of the following holds.  
            \begin{itemize}
                \item $G$ has an $x$-restricted induced subgraph $S$ with $|S|\geq x^{c_1}|F|$.
            
                \item $G$ has a complete or anti-complete $(k,|F|/k^{c_1})$-blockade with $k\geq \epsilon^{-1}$.
            
                \item $G$ has a clique or stable set of size at least $(x^{c_1}|F|)^{c_2}$. 
            \end{itemize}
            
            Suppose first that the first bullet holds. 
            Since $x=|G|^{-\frac{1}{3c_1}}$ and $a_1\geq 1$, $|S|\geq x^{c_1}|F|\geq x^{c_1+\frac{2}{7}}|G|=x^{c_1+\frac{2}{7}}\cdot x^{-3c_1}\geq x^{-1}$. 
            By taking complement if necessary, we may assume that $S$ is $x$-sparse.  
            Since $x<\frac{1}{4}$, $S$ has a stable set of size at least $\frac{|S|}{x|S|+1}=\frac{1}{x+\frac{1}{|S|}}\geq \frac{1}{x+x}\geq x^{-\frac{1}{2}}=|G|^{\frac{1}{6c_1}}\geq |G|^c$. 

            We then suppose that the second bullet holds. 
            By taking a vertex from each block, we have a clique or stable set of size $k\geq\epsilon^{-1}=|G|^{\frac{1}{21c_1^2}}\geq |G|^c$. 

            Finally, we suppose that the third bullet holds. 
            So $G$ has a clique or stable set of size at least $(x^{c_1}|F|)^{c_2}\geq (x^{c_1}\epsilon^{2c_1}|G|)^{c_2}=(x^{c_1+\frac{2}{7}}|G|)^{c_2}= |G|^{c_2(1-\frac{c_1+\frac{2}{7}}{3c_1})}\geq |G|^{c_2/2}\geq |G|^{c}$, where the second last inequality is due to $c_1\geq 1$.

            This proves Claim \ref{clm:a large F of G has a complete or anti-complete blockade with small length}. 
        \end{subproof}

        By Claim \ref{clm:a large F of G has a complete or anti-complete blockade with small length} together with Lemma \ref{lem:give a poly-rodl subgraph} (with $a=c_1$), $G$ has an $\epsilon$-restricted induced subgraph $S$ with $|S|\geq \epsilon^{3c_1}|G|$.
        Since $\epsilon=x^{\frac{1}{7c_1}}=|G|^{-\frac{1}{21c_1^2}}$ and $c_1\geq 1$, $|S|\geq \epsilon^{3c_1}|G|=\epsilon^{-21c_1^2+3c_1}\geq \epsilon^{-1}$.
        By taking complement if necessary, we may assume that $S$ is $\epsilon$-sparse. 
        Since $\epsilon\leq \frac{1}{4}$, $S$ has a stable set of size at least $\frac{|S|}{\epsilon|S|+1}=\frac{1}{\epsilon+\frac{1}{|S|}}\geq \frac{1}{\epsilon+\epsilon}\geq \epsilon^{-\frac{1}{2}}=|G|^{\frac{1}{42c_1^2}}\geq |G|^c$.
        
        This completes the proof of Lemma \ref{lem:reduce EH to gene-nice 2}. 
    \end{proof}

\section{Reducing generalized niceness to property $(*)$}\label{sec:iterative-sparsification-1}

    In this section, we prove Lemma \ref{lem:reduce gene-nice to comb-property} which says $\mathcal{F}$ is generalized nice if $\mathcal{F}$ has property $(*)$ and $\mathcal{F}$ is leaf-reducible. 
    
    A finite class of graphs $\mathcal{F}$, we say $\mathcal{F}$ has {\em property $(*)$} if there exist $c_1,c_2,c_3>0$ such that for every $\overline{\mathcal{F}}$-free graph $G$ the following holds. If there is a $(\ell,w)$-comb $((a_i,B_i),i\in[\ell])$ in $G$, where $\ell,w\geq 4$, and there exists $v\in V(G)\setminus(\{a_i:i\in[\ell]\}\cup\bigcup_{i\in[\ell]}B_i)$ such that $v$ is complete to $\bigcup_{i\in[\ell]}B_i$ and anti-complete to $\{a_i:i\in[\ell]\}$, then one of the following holds.
    
    \begin{itemize}
        \item $G$ has a clique or stable set of size at least $w^{c_1}$.
        \item $G$ has a complete or anti-complete $(k,w/k^{c_2})$-blockade with $k\geq \ell^{c_3}$.
        \item $G$ has a pure $(\ell,w/\ell^{2})$-blockade.
    \end{itemize}
    
    We now prove the first lemma through the property $(*)$, while the existence of the comb and the special vertex in the statement of property $(*)$ is given by Lemma \ref{lem:get a comb in a sparse graph}.
    
    \begin{lemma}\label{lem:main-pure-blockade}
        Suppose that $\mathcal{F}$ has property $(*)$ and $\mathcal{F}$ is leaf-reducible. Then
        there exist constants $c_1,c_2,c_3>0$, $c_4,c_5\geq 4$ such that for every $0<x\leq y\leq 2^{-4c_5}$ and every $y^3$-restricted $\overline{\mathcal{F}}$-free graph $G$, one of the following holds.
        \begin{itemize}
        
            \item[$(1)$] There are disjoint $X,Y\subseteq V(G)$ such that $|X|\geq y^{c_4}|G|$, $|Y|\geq (1-c_5y)|G|$, and $Y$ is $x$-sparse or complete to $X$.
            
            \item[$(2)$] $G$ has a $2y^4$-restricted induced subgraph of size at least $y^{c_4}|G|$.
            
            \item[$(3)$] $G$ has a clique or stable set of size at least $(x^{9}|G|)^{c_1}$. 
            
            \item[$(4)$] $G$ has a complete or anti-complete $(k,\frac{|G|}{k^{c_2+\frac{6}{c_3}}})$-blockade with $k\geq y^{-c_3}$.
            
            \item[$(5)$] $G$ has a pure $(\ell,|G|/\ell^{8})$ blockade with $\ell\in[y^{-1}, x^{-2}]$.
            
        \end{itemize}
    \end{lemma}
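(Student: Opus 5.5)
Having fixed $x\le y\le 2^{-4c_5}$, I plan to reduce to the sparse case and then combine Lemma \ref{lem:get a comb in a sparse graph} with property $(*)$. Let $c_1',c_2',c_3'$ be the constants of property $(*)$ and $d,h$ those of Lemma \ref{lem:leaf expanding}. I will take $c_5=\max\{4,h\}$, $c_4$ large compared with $d$ (at least $4d+1$, and at least $4$), $c_3=c_3'$, $c_2=c_2'$, and $c_1=c_1'/2$ or similar; the exact values are fixed at the end. If $|G|<y^{-c_4}$, outcome (2) holds trivially with a single vertex, so assume $|G|$ is large.

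Since $G$ is $y^3$-restricted, there are two cases. If $\overline{G}$ is $y^3$-sparse, then it is $y$-sparse, and $\overline{G}$ is $\mathcal{F}$-free. Leaf-reducibility via Lemma \ref{lem:leaf expanding} (with $b=4$) then gives either a $y^4$-restricted subgraph of size $y^{4d+1}|G|$, which is outcome (2), or sets $X,Y$ with $Y$ anti-complete to $X$ in $\overline{G}$, hence complete to $X$ in $G$, with $|Y|\ge(1-hy)|G|$. That is outcome (1). So assume $G$ itself is $y^3$-sparse, with $|G|\ge y^{-4}$, and apply Lemma \ref{lem:get a comb in a sparse graph}. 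Its first two outcomes are exactly (1) (with $c_4\ge4$, $c_5\ge4$) and (2) (taking $S=G$).

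Otherwise there is an $(\ell,w)$-comb with $\ell\in[y^{-1},x^{-2}]$, $w=y^4|G|/\ell^2$, together with the special vertex $v$. To invoke property $(*)$ I need $\ell,w\ge4$: $\ell\ge y^{-1}\ge4$, and $w\ge y^4x^4|G|$, which I may assume is at least $4$ (otherwise $|G|$ is small in terms of $x$, and outcome (3) holds trivially since $(x^9|G|)^{c_1}\le1$). Property $(*)$ then yields one of three structures.

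\begin{itemize}
\item A clique or stable set of size $w^{c_1'}\ge (x^{4}y^4|G|)^{c_1'}\ge (x^9|G|)^{c_1'}$, using $y\ge x$. This is outcome (3).
\item A complete or anti-complete $(k,w/k^{c_2'})$-blockade with $k\ge\ell^{c_3'}$. Since $\ell\ge y^{-1}$, we get $k\ge y^{-c_3}$. Also $\ell\le k^{1/c_3}$, so $w\ge y^4|G|\,k^{-2/c_3}\ge |G|\,k^{-4/c_3}k^{-2/c_3}$, because $y^4\ge \ell^{-4}\ge k^{-4/c_3}$. Hence $w/k^{c_2}\ge |G|/k^{c_2+6/c_3}$, which is outcome (4).
\item A pure $(\ell,w/\ell^2)$-blockade. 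Here $w/\ell^2=y^4|G|/\ell^4\ge |G|/\ell^8$, using $y\ge\ell^{-1}$. This is outcome (5).
\end{itemize}

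The main obstacle is only bookkeeping: converting the comb width $w$ into bounds in terms of $|G|$, $k$ and $\ell$, and in particular absorbing the factor $y^4$ into powers of $k$ in (4) through $y\ge\ell^{-1}\ge k^{-1/c_3}$. This is exactly where the exponent $6/c_3$ arises. A secondary care point is the restricted-versus-sparse reduction, which relies on leaf-reducibility.
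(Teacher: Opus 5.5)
Your proposal is correct and matches the paper's proof essentially step for step: the complement case goes through Lemma \ref{lem:leaf expanding} with $b=4$, the sparse case through Lemma \ref{lem:get a comb in a sparse graph}, and then property $(*)$, with the same bookkeeping $y\ge \ell^{-1}\ge k^{-1/c_3}$ producing the exponent $6/c_3$. The only cosmetic difference is in handling small graphs. The paper assumes $|G|\ge x^{-9}$ up front, so that $w\ge x^8|G|\ge x^{-1}\ge 4$ automatically; you instead handle $w<4$ at the point of use, which works equally well since then $x^9|G|<4x^5y^{-4}\le 4x<1$.
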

    
    \begin{proof}
        Let $c_1,c_2,c_3>0$ be given by the property $(*)$. 
        Let $c_4=\max\{4d+1,4\}$ and  $c_5=\max\{h,4\}$, where $d,h$ are given by Lemma \ref{lem:leaf expanding}. 
        We claim that $c_1,c_2,c_3,c_4,c_5$ suffice.

        Suppose first that $\overline{G}$ is $y^3$-sparse.
        By Lemma \ref{lem:leaf expanding} with $b=4$, we have 
        \begin{itemize}
            \item there are disjoint $X,Y\subseteq V(G)$ with $|X|\geq y^{4d+1}$, $|Y|\geq (1-hy)|G|$, such that $Y$ is complete to $X$ in $G$ or
            
            \item $G$ has a $y^4$-restricted induced subgraph of size at least $y^{4d+1}|G|$,  
        \end{itemize}
        which gives the first or the second outcome of Lemma \ref{lem:main-pure-blockade}. 
        
        So we may assume that $G$ is $y^3$-sparse. If $|G|\leq x^{-9}$, then the third outcome of Lemma \ref{lem:main-pure-blockade} holds since $c_1>0$. 
        So we may assume that $|G|\geq x^{-9}\geq y^{-4}$. 
        By Lemma \ref{lem:get a comb in a sparse graph}, one of the following holds.
        \begin{itemize}
        \item There are disjoint $X,Y\subseteq V(G)$ such that $|X|\geq y^4|G|$, $|Y|\geq (1-4y)|G|$, and $Y$ is $x$-sparse to $X$.
        
        \item $G$ is $2y^4$-sparse.
        
        \item For some integer $\ell\in[y^{-1},x^{-2}]$, there is an $(\ell, y^4|G|/\ell^2)$-comb $((a_i,B_i):i\in[\ell])$ in $G$, and a vertex $v\in V(G)\setminus(\{a_i:i\in[\ell]\}\cup\bigcup_{i\in[\ell]}B_i)$ such that $v$ is complete to $\bigcup_{i\in[\ell]}B_i$ and anti-complete to $\{a_i:i\in[\ell]\}$.
        \end{itemize}
        If the first or the second outcome of Lemma \ref{lem:get a comb in a sparse graph} holds, then the first or the second outcome of Lemma \ref{lem:main-pure-blockade} holds. 
        So we may assume that the third outcome of Lemma \ref{lem:get a comb in a sparse graph} holds.         
        Since $y\leq 2^{-4c_5}\leq 2^{-8}$, $\ell\geq y^{-1}> 4$. Moreover,
        \begin{align*}
        w:=y^4|G|/\ell^2 & \geq \max\left\{\frac{|G|}{\ell^6},x^8|G|\right\} & (y^{-1}\leq \ell, \ell\leq x^{-2}, y\geq x) \\
            & \geq x^{-1} & (|G|\geq x^{-9})\\
            & \geq 4 & (x\leq 2^{-4c_5},c_5\geq 4)
        \end{align*}
  
        By the condition that $\mathcal{F}$ has property $(*)$, one of the following holds. 

        \begin{itemize}
            
            \item $G$ has a clique or stable set of size at least $w^{c_1}$.
            
            \item $G$ has a complete or anti-complete $(k,\frac{w}{k^{c_2}})$-blockade with $k\geq \ell^{c_3}$.
            
            \item $G$ has a pure $(\ell,\frac{w}{\ell^{2}})$-blockade.
            
        \end{itemize} 
        
        If the first bullet holds, then since $w^{c_1}\geq (x^8|G|)^{c_1}$, the third outcome of Lemma \ref{lem:main-pure-blockade} holds.
        If the second bullet holds, then since $\frac{w}{k^{c_2}}\geq \frac{|G|}{\ell^6\cdot k^{c_2}}\geq \frac{|G|}{k^{c_2+\frac{6}{c_3}}}$ and $k\geq \ell^{c_3}\geq y^{-c_3}$, the fourth outcome of Lemma \ref{lem:main-pure-blockade} holds. 
        If the third bullet holds, then since $\frac{w}{\ell^2}\geq \frac{|G|}{\ell^8}$, the fifth outcome of Lemma \ref{lem:main-pure-blockade} holds. 

        This completes the proof of Lemma \ref{lem:main-pure-blockade}. 
    \end{proof}
    
    We then present a lemma to translate the first outcome of Lemma \ref{lem:main-pure-blockade} to a long sparse or complete blockade. 

    \begin{lemma}\label{lem:main-once-a-block-turn1}
        Suppose that $\mathcal{F}$ has property $(*)$ and $\mathcal{F}$ is leaf-reducible. 
        Then there exist constants $c_1,c_2,c_3>0$, $c_4,c_5>4$ such that for every $0<x\leq y\leq c=2^{-4c_5}$ and every $cy^3$-restricted $\overline{\mathcal{F}}$-free graph $G$, one of the following holds.
        \begin{itemize}
            \item[$(1)$] $G$ has an $x$-sparse or complete $(y^{-1},y^{c_4+2}|G|)$-blockade.
            
            \item[$(2)$] $G$ has a 2$y^{4}$-restricted induced subgraph $S$ with $|S|\geq y^{c_4+2}|G|$.
            
            \item[$(3)$] $G$ has a clique or stable set of size at least $(x^{10}|G|)^{c_1}$.
            
            \item[$(4)$] $G$ has a complete or anti-complete $(k,\frac{|G|}{k^{c_2+\frac{7}{c_3}}})$-blockade with $k\geq y^{-c_3}$. 
            
            \item[$(5)$] $G$ has a pure $(\ell,\frac{|G|}{\ell^{9}})$ blockade with $\ell\in[y^{-1}, x^{-2}]$. 
        \end{itemize}
    \end{lemma}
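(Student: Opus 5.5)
The argument mirrors the proof of Lemma \ref{lem:main-once-a-block-turn2}. Take $c_1,c_2,c_3>0$ and $c_4,c_5\geq 4$ from Lemma \ref{lem:main-pure-blockade}; since only the upper bounds matter, we may enlarge $c_4,c_5$ slightly so that they are strictly greater than $4$. Set $c=2^{-4c_5}$. We keep $c_1,c_2,c_3$ and rely on the slack already built into the outcomes (exponent $10$ instead of $9$, $\frac{7}{c_3}$ instead of $\frac{6}{c_3}$, $\ell^9$ instead of $\ell^8$, $y^{c_4+2}$ instead of $y^{c_4}$). This slack absorbs the factor $c$ lost when passing from $G$ to a subgraph $F$ with $|F|\geq c|G|$.

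\emph{Case 1: the hypothesis of Lemma \ref{lem:get a blockade by iteration} holds.} Suppose that every induced subgraph $F$ of $G$ with $|F|\geq c|G|$ contains disjoint $X,Y$ with $|X|\geq y^{c_4}|F|$, $|Y|\geq(1-c_5y)|F|$, and $Y$ $x$-sparse or complete to $X$. Apply Lemma \ref{lem:get a blockade by iteration} with $a=c_4$, $b=c_5$ and this $c$; it needs $y\leq c$ and $|G|\geq y^{-(c_4+2)}$. It yields an $x$-sparse or complete $(y^{-1},y^{c_4+2}|G|)$-blockade, which is outcome (1). If instead $|G|< y^{-(c_4+2)}$, then a single vertex is a $2y^4$-restricted induced subgraph of size at least $y^{c_4+2}|G|$, giving outcome (2).

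\emph{Case 2: otherwise.} Then some induced $F$ with $|F|\geq c|G|$ has no such pair $(X,Y)$. Since $G$ is $cy^3$-restricted and $|F|\geq c|G|$, each vertex has at most $cy^3|G|\leq y^3|F|$ neighbours (or non-neighbours) in $F$, so $F$ is $y^3$-restricted. Also $x\leq y\leq 2^{-4c_5}$, so Lemma \ref{lem:main-pure-blockade} applies to $F$, and its outcome (1) is excluded. Each remaining outcome transfers to $G$ using $|F|\geq c|G|$ together with $c\geq y\geq x$:
\begin{itemize}
\item outcome (2) gives size at least $y^{c_4}c|G|\geq y^{c_4+2}|G|$;
\item outcome (3) gives $(x^9|F|)^{c_1}\geq(x^{10}|G|)^{c_1}$;
\item outcome (4) gives $|F|/k^{c_2+6/c_3}\geq |G|/k^{c_2+7/c_3}$, because $k^{1/c_3}\geq y^{-1}\geq c^{-1}$;
\item outcome (5) gives $|F|/\ell^8\geq |G|/\ell^9$, because $\ell\geq y^{-1}\geq c^{-1}$.
\end{itemize}

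The only delicate point is the constant bookkeeping. One must check that $c=2^{-4c_5}$ matches the requirement $c=2^{-4b}$ of Lemma \ref{lem:get a blockade by iteration} with $b=c_5$, and that the restrictedness transfers to $F$. Both are routine, so no step should be a genuine obstacle.
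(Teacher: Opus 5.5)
Your proposal is correct and follows the paper's proof essentially step by step. It uses the same case split on whether every $F$ with $|F|\geq c|G|$ admits the pair $(X,Y)$, applies Lemma \ref{lem:get a blockade by iteration} with $a=c_4$, $b=c_5$ in one case, and applies Lemma \ref{lem:main-pure-blockade} to the $y^3$-restricted $F$ in the other, with the same transfers of outcomes back to $G$. Your step of enlarging $c_4,c_5$ to obtain the strict inequality $c_4,c_5>4$ also fixes a small mismatch in the paper, whose proof only delivers $c_4,c_5\geq 4$.
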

    \begin{proof}
        Let $c_1,c_2,c_3>0$, $c_4,c_5\geq 4$ be given by Lemma \ref{lem:main-pure-blockade}. 
        We claim that $c_1,c_2,c_3,c_4,c_5$ suffice. 
        If $|G|\leq y^{-(c_4+2)}$, then the second outcome of Lemma \ref{lem:main-once-a-block-turn1} holds.
        So we may assume that $|G|\geq y^{-(c_4+2)}$. 
        
        Suppose first that for every induced subgraph $F$ of $G$ with $|F|\geq c|G|$, there are disjoint $X,Y\subseteq V(F)$ such that $|X|\geq y^{c_4}|F|$, $|Y|\geq (1-c_5y)|F|$, and $Y$ is $x$-sparse or complete to $X$. 
        By Lemma \ref{lem:get a blockade by iteration} (with $a=c_4$, $b=c_5$), there is an $x$-sparse or complete $(y^{-1},y^{c_4+2}|G|)$-blockade. 
        This give the first outcome of Lemma \ref{lem:main-once-a-block-turn1}. 
        
        So we may assume that there is an induced subgraph $F$ of $G$ with $|F|\geq c|G|$ such that there is no disjoint $X,Y\subseteq V(F)$ such that $|X|\geq y^{c_4}|F|$, $|Y|\geq (1-c_5y)|F|$, and $Y$ is $x$-sparse or complete to $X$. 
        Since $G$ is $cy^3$-restricted and $|F|\geq c|G|$, $F$ is $y^3$-restricted. 
        By Lemma \ref{lem:main-pure-blockade} with $G$ replaced by $F$, one of the following holds.
        \begin{itemize}
    
            \item $F$ has a $2y^4$-restricted induced subgraph of size at least $y^{c_4}|F|$.
            
            \item $F$ has a clique or stable set of size at least $(x^{9}|F|)^{c_1}$. 
            
            \item $F$ has a complete or anti-complete $(k,\frac{|F|}{k^{c_2+\frac{6}{c_3}}})$-blockade with $k\geq y^{-c_3}$.
            
            \item $F$ has a pure $(\ell,|F|/\ell^{8})$ blockade with $\ell\in[y^{-1}, x^{-2}]$.
            
        \end{itemize}
        
        If the first bullet holds, then the second outcome of Lemma \ref{lem:main-once-a-block-turn1} holds since $y^{c_4}|F|\geq y^{c_4}\cdot y|G|=y^{c_4+1}|G|$. If the second bullet holds, then the third outcome of Lemma \ref{lem:main-once-a-block-turn1} holds since $(x^{9}|F|)^{c_1}\geq (x^{9}\cdot x|G|)^{c_1}=(x^{10}|G|)^{c_1}$. If the third bullet holds, then the fourth outcome of Lemma \ref{lem:main-once-a-block-turn1} holds since $\frac{|F|}{k^{c_2+\frac{6}{c_3}}}\geq \frac{y|
        G|}{k^{c_2+\frac{6}{c_3}}}\geq \frac{|G|}{k^{c_2+\frac{6}{c_3}+\frac{1}{c_3}}}=\frac{|G|}{k^{c_2+\frac{7}{c_3}}}$. If the fourth bullet holds, then the fifth outcome of Lemma \ref{lem:main-once-a-block-turn1} holds since $\frac{|F|}{\ell^{8}}\geq \frac{y|G|}{\ell^{8}}\geq \frac{|G|}{\ell^{9}}$.  
        
        This completes the proof of Lemma \ref{lem:main-once-a-block-turn1}.
    \end{proof}
    We then use the method of iterative sparsification to decrease the sparse condition of Lemma \ref{lem:main-once-a-block-turn1} to a constant sparse condition.
   
    \begin{lemma}\label{lem:main-iterative-sparsification-turn1}
        Suppose that $\mathcal{F}$ has property $(*)$ and $\mathcal{F}$ is leaf-reducible. Then 
        there exist constants $c_1,c_2,c_3>0$, $c_4,c_5\ge 4$ such that for every $0<x\leq c^{10}$ and every $c^{10}$-restricted $\overline{\mathcal{F}}$-free graph $G$ where $c=2^{-4c_5}$, one of the following holds.
        \begin{itemize}
            \item[$(1)$] $G$ has an $x$-restricted induced subgraph $S$ of size at least $x^{22c_4}|G|$.
            
            \item[$(2)$] $G$ has a clique or stable set of size at least $(x^{30c_4}|G|)^{c_1}$.
            
            \item[$(3)$] $G$ has a complete or anti-complete $(k,\frac{|G|}{k^{c_2+\frac{27c_4}{c_3}}})$-blockade with $k\geq 2$.
            
            \item[$(4)$] $G$ has an $x$-sparse or pure $(\ell,|G|/\ell^{29c_4})$ blockade with $\ell\in[c^{-1}, x^{-2}]$.  
            
        \end{itemize}
    \end{lemma}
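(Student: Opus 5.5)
This follows the pattern of Lemma \ref{lem:main-iterative-sparsification-turn2}. Take $c_1,c_2,c_3>0$, $c_4,c_5>4$ from Lemma \ref{lem:main-once-a-block-turn1}, set $c=2^{-4c_5}$, and assume none of outcomes (1)--(4) holds. The plan is to feed Lemma \ref{lem:main-once-a-block-turn1} into the restricted version of Lemma \ref{lem:give a more sparse subgraph 2}. We apply it at a scale $\lambda$ with $\lambda\in[x,c^{10}]$, writing $\lambda=cy^3$ so that $y=(\lambda/c)^{1/3}\in[x^{1/3},c^3]$. Then $y\le c$, and $x\le y$ provided $x\le c^{10}$; this is automatic since $x\le\lambda$ gives $x\le y^3c\le y$.

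The key claim, the analogue of Claim \ref{clm:condition of lemma-give a more sparse subgraph}, is the following. For every $\lambda\in[x,c^{10}]$ and every $\lambda$-restricted induced subgraph $F$ of $G$ with $|F|\ge\lambda^{b_2}|G|$, $F$ has a $\lambda^{b_1}$-restricted induced subgraph with at least $\lambda^{b_3}|F|$ vertices. The exponents will be of the form $b_1\approx 4/3$ (coming from $2y^4\le\lambda^{4/3-o(1)}$ for small $c$), and we choose $b_3$ and $b_2$ so that $b_1b_2\ge b_2+b_3$.

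To prove the claim, apply Lemma \ref{lem:main-once-a-block-turn1} to $F$ with this $y$ and with $x$. Outcome (2) is exactly the desired restricted subgraph: $2y^4$ is at most a power $\lambda^{b_1}$, and $y^{c_4+2}\ge\lambda^{(c_4+2)/3}\cdot c^{-\ldots}$, so we may take $b_3$ around $c_4+2$. Outcome (3) gives a clique or stable set of size $(x^{10}|F|)^{c_1}\ge(x^{10+b_2}|G|)^{c_1}$, which is outcome (2) of the present lemma once $30c_4\ge 10+b_2$. Outcome (4) gives a complete or anti-complete blockade with $k\ge y^{-c_3}\ge 2$. Using $|F|\ge\lambda^{b_2}|G|\ge y^{3b_2}|G|$, the shortfall $y^{3b_2}$ can be absorbed into $k^{-3b_2/c_3}$, giving exponent $c_2+(7+3b_2)/c_3\le c_2+27c_4/c_3$; this is outcome (3). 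Outcomes (1) and (5) give a blockade that is $x$-sparse, complete, or pure, with length $\ell\in[y^{-1},x^{-2}]$ and $y^{-1}\ge c^{-1}$. Its block sizes are $y^{c_4+2}|F|$ or $|F|/\ell^9$, and we convert the factor $\lambda^{b_2}$ into a power of $\ell$ via $\ell\ge y^{-1}$ and $\lambda\ge y^3 c\ge y^4$. This yields an $(\ell,|G|/\ell^{29c_4})$-blockade, provided $c_4+2+4b_2\le 29c_4$ and $9+4b_2\le 29c_4$. A complete blockade is pure, so this is outcome (4).

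With the claim established, we need the base hypothesis of Lemma \ref{lem:give a more sparse subgraph 2}, namely a $c^{10}$-restricted subgraph of size at least $(c^{10})^{b_2}|G|$. This is given by $G$ itself. Lemma \ref{lem:give a more sparse subgraph 2} (restricted version, with its $c$ replaced by $c^{10}$) then yields an $x$-restricted subgraph of size $x^{b_1b_2}|G|$, and we need $b_1b_2\le 22c_4$. The main obstacle is the bookkeeping of exponents. The choice $b_1=4/3$ strictly beats $1$ only by a constant factor, so $b_2$ must be taken of order $3b_3\approx 3(c_4+2)$. Then $b_1b_2\approx 4(c_4+2)\le 22c_4$ and $4b_2\approx 12c_4+24\le 29c_4$ for $c_4>4$, so the constants $22,29,30,27$ appear to have enough room.

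Finally, the degradation from $2y^4$ to $\lambda^{4/3}$ must be handled. Since $2y^4=2(\lambda/c)^{4/3}$, the factor $c^{-4/3}$ hurts, so I would instead take $b_1$ slightly smaller, e.g. $b_1=5/4$, using $\lambda\le c^{10}$ to absorb the constants: $2c^{-4/3}\lambda^{4/3}\le\lambda^{5/4}$ when $\lambda^{1/12}\le c^{4/3}/2$, which holds since $\lambda\le c^{10}$ is too weak. I would therefore possibly re-parametrize with $\lambda=c^{k}y^3$ and choose $b_2=4b_3$, rechecking the inequalities.
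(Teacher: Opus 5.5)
Your framework is the paper's: apply Lemma~\ref{lem:main-once-a-block-turn1} scale by scale to verify the hypothesis of the restricted form of Lemma~\ref{lem:give a more sparse subgraph 2}, with $G$ itself as the base. The gap is that the proposal stops exactly at the delicate step, the choice of $b_1$, and the remedies you float do not work. With $\lambda=cy^3$ you get $2y^4=2c^{-4/3}\lambda^{4/3}$. Since $\lambda$ must range up to $c^{10}$ (the only base scale $G$ provides), the factor $c^{-4/3}$ can be absorbed into $\lambda^{4/3-b_1}$ only if $10(4/3-b_1)>4/3$, i.e.\ $b_1<6/5$. So $b_1=4/3$ and $b_1=5/4$ both fail. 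Re-parametrizing as $\lambda=c^ky^3$ goes the wrong way: $k\ge 1$ worsens the loss to $c^{-4k/3}$, and $k<1$ loses the $cy^3$-restrictedness needed to invoke Lemma~\ref{lem:main-once-a-block-turn1}. A choice that works is $b_1=11/10$, since $2y^4=2c^{-4/3}\lambda^{7/30}\lambda^{11/10}\le 2c\,\lambda^{11/10}\le\lambda^{11/10}$.

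Once $b_1<6/5$, the ratio $b_2/b_3\ge 1/(b_1-1)$ exceeds $5$, so the room in the constants $27,29$ is no longer automatic. For $b_1=11/10$ you need $b_2=10b_3$, and your $b_3\approx c_4+2$ then violates $c_4+2+4b_2\le 29c_4$ and $7+4b_2\le 27c_4$. You must use the sharp $b_3=(c_4+2)/3$, which follows from $y\ge\lambda^{1/3}$. Then $b_2=10(c_4+2)/3$, and $b_1b_2\le 22c_4$, $10+b_2\le 30c_4$, $7+4b_2\le 27c_4$ and $9+4b_2\le 29c_4$ all hold for $c_4\ge 4$.

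Two smaller slips:
\begin{itemize}
\item $\lambda^{b_2}\ge y^{3b_2}$ is backwards, since $\lambda=cy^3\le y^3$. Use $\lambda\ge y^4$, as you do later.
\item $k\ge y^{-c_3}\ge 2$ is unjustified, since $c_3$ may be tiny. If $1<k<2$, the blockade still has two blocks of size at least $|G|/2^{c_2+27c_4/c_3}$; this is how the paper handles it.
\end{itemize}

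The paper avoids the constant-absorption problem altogether by taking $\lambda=y^{10/3}$ with $y\in[x,c^3]$. Then $y^{10/3}\le cy^3$ holds by monotonicity, $2y^4\le y^{11/3}=\lambda^{11/10}$ with no loss, and $(b_1,b_2,b_3)=(11/10,\,3(c_4+2),\,3(c_4+2)/10)$ satisfy $b_1b_2=b_2+b_3$ exactly.
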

    
    \begin{proof}
        Let $c_1,c_2,c_3>0$, $c_4,c_5\geq 4$ be given by Lemma \ref{lem:main-once-a-block-turn1}. 
        We claim that $c_1,c_2,c_3,c_4,c_5$ suffice. 
        Suppose that none of Lemma \ref{lem:main-iterative-sparsification-turn1} holds. 
        We first present a claim to give the condition of Lemma \ref{lem:give a more sparse subgraph 2}. 
        
        \begin{claim}\label{clm:condition of lemma-give a more sparse subgraph 2}
            For every $y\in[x,c^{3}]$ and every $y^{\frac{10}{3}}$-restricted induced subgraph $F$ of $G$ with $|F|\geq y^{10(c_4+2)}|G|$, there is a $y^{\frac{11}{3}}$-restricted induced subgraph of $F$ with at least $y^{(c_4+2)}|F|$ vertices.
        \end{claim}
        
        \begin{subproof}[Proof of Claim \ref{clm:condition of lemma-give a more sparse subgraph 2}]
            Suppose to the contrary that there is an $y^{\frac{10}{3}}$-restricted induced subgraph $F$ of $G$ with $|F|\geq y^{10(c_4+2)}|G|$ such that $F$ has no $y^{\frac{11}{3}}$-restricted induced subgraph of $F$ with at least $y^{(c_4+2)}|F|$ vertices. 
            Since $y\leq c^3$, $y^{\frac{10}{3}}\leq cy^3$. 
            By Lemma \ref{lem:main-once-a-block-turn1} with $G$ replaced by $F$, one of the following holds.
            \begin{itemize}
                \item $F$ has an $x$-sparse or complete $(y^{-1},y^{c_4+2}|F|)$-blockade. 
            
                \item $F$ has a 2$y^{4}$-restricted induced subgraph $S$ with $|S|\geq y^{c_4+2}|F|$.
            
                \item $F$ has a clique or stable set of size at least $(x^{10}|F|)^{c_1}$.
            
                \item $F$ has a complete or anti-complete $(k,\frac{|F|}{k^{c_2+\frac{7}{c_3}}})$-blockade with $k\geq y^{-c_3}$.
            
                \item $F$ has a pure $(\ell,\frac{|G|}{\ell^{9}})$ blockade with $\ell\in[y^{-1}, x^{-2}]$.
            \end{itemize}

            If the first bullet holds, then the fourth outcome of Lemma \ref{lem:main-iterative-sparsification-turn1} holds since $y^{-1}\in [c^{-1},x^{-1}]$ and $y^{c_4+2}|F|\geq y^{c_4+2}\cdot y^{10(c_4+2)}|G|\geq y^{29c_4}|G|$.

            If the second bullet holds, then it contradicts the condition of Claim \ref{clm:condition of lemma-give a more sparse subgraph 2} since $2y^4\leq y^{\frac{11}{3}}$.

            If the third bullet holds, then the second outcome of Lemma \ref{lem:main-iterative-sparsification-turn1} holds since $x^{10}|F|\geq x^{10}\cdot y^{10(c_4+2)}|G|\geq x^{30c_4}|G|$.  

            Suppose the fourth bullet holds. 
            If $k\geq2$, then the third outcome of Lemma \ref{lem:main-iterative-sparsification-turn1} holds since $\frac{|F|}{k^{c_2+\frac{7}{c_3}}}\geq \frac{y^{10(c_4+2)}|G|}{k^{c_2+\frac{7}{c_3}}}\geq \frac{|G|}{k^{c_2+\frac{7}{c_3}+\frac{10(c_4+2)}{c_3}}}\geq \frac{|G|}{k^{c_2+\frac{27c_4}{c_3}}}$. 
            If $2>k\geq y^{-c_3}> 1$, then since $\frac{|G|}{k^{c_2+\frac{27c_4}{c_3}}}\geq \frac{|G|}{2^{c_2+\frac{27c_4}{c_3}}}$, there is a complete or anti-complete $(2, \frac{|G|}{2^{c_2+\frac{27c_4}{c_3}}})$-blockade, which also gives the third outcome of Lemma \ref{lem:main-iterative-sparsification-turn1}.  

            If the fifth bullet holds, then the fourth outcome of Lemma \ref{lem:main-iterative-sparsification-turn1} holds since $\frac{|F|}{\ell^{9}}\geq \frac{y^{10(c_4+2)}|G|}{\ell^{9}}\geq \frac{|G|}{\ell^{29c_4}}$.  

            This completes the proof of Claim \ref{clm:condition of lemma-give a more sparse subgraph 2}. 
        \end{subproof}
        
        By Claim \ref{clm:condition of lemma-give a more sparse subgraph 2} and Lemma \ref{lem:give a more sparse subgraph 2} (with $b_1=\frac{11}{10}$, $b_2=3(c_4+2)$, $b_3=\frac{3(c_4+2)}{10}$ and $c$ replaced by $c^{10}$, $x$ replaced by $x^{\frac{10}{3}}$, $\lambda$ replaced by $y^{\frac{10}{3}}$, sparse replaced restricted), $G$ has an $x^{\frac{10}{3}}$-restricted induced subgraph $S$ with $|S|\geq (x^{\frac{10}{3}})^{\frac{33(c_4+2)}{10}}|G|= x^{11(c_4+2)}|G|\geq x^{22c_4}|G|$, which gives the first outcome of of Lemma \ref{lem:main-iterative-sparsification-turn1}. 

        This completes the proof of Lemma \ref{lem:main-iterative-sparsification-turn1}. 
    \end{proof}
    
    We then remove the sparsity hypothesis in Lemma \ref{lem:main-iterative-sparsification-turn1} by using R\"odl's Theorem.

    \begin{lemma}\label{main-rodl-turn1}
        Suppose that $\mathcal{F}$ has property $(*)$ and $\mathcal{F}$ is leaf-reducible. Then
        there exist constants $c_1>0,c_4\geq 4$ and $d\geq 58c_4$ such that for every $x\in (0,2^{-d})$ and every $\overline{\mathcal{F}}$-free graph $G$ with $|G|\geq x^{-d}$, one of the following holds. 
        \begin{itemize}
            \item[$(1)$] $G$ has an $x$-restricted induced subgraph $S$ of size at least $x^{23c_4}|G|$. 
            
            \item[$(2)$] $G$ has a pure or $x$-sparse $(k,|G|/k^d)$-blockade with $k\in[2,x^{-1}]$.
            
            \item[$(3)$] $G$ has a clique or stable set of size at least $(x^{31c_4}|G|)^{c_1}$.
            
            \item[$(4)$] $G$ has a complete or anti-complete $(k,|G|/k^d)$-blockade with $k\geq x^{-1}$.  

        \end{itemize}
    \end{lemma}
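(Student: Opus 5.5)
The plan mirrors the proof of Lemma \ref{lem:main-rodl-turn2}: first pass to a restricted subgraph using R\"odl's theorem, then apply Lemma \ref{lem:main-iterative-sparsification-turn1}.

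\textbf{Setup.} Let $c_1,c_2,c_3>0$, $c_4,c_5\ge 4$ and $c=2^{-4c_5}$ be given by Lemma \ref{lem:main-iterative-sparsification-turn1}, and put $\xi=c^{10}$. By Theorem \ref{thm:Rodl} (applied with $H$ ranging over $\overline{\mathcal{F}}$, or more precisely to a single member), there is $\delta>0$ such that every $\overline{\mathcal{F}}$-free graph $G$ has a $\xi$-restricted induced subgraph $F$ with $|F|\ge \delta|G|$. Choose $d\ge 58c_4$ so large that
\[
\delta\ge 2^{-c_4 d/58},\qquad d\ge c_2+\tfrac{27c_4}{c_3}+\log_2(1/\delta),\qquad 2^{-d}\le \xi .
\]
The first condition is intended to give $\delta\ge x^{c_4}$ for every $x<2^{-d}$ (since then $x^{c_4}\le 2^{-dc_4}\le\delta$).

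\textbf{Applying the iterative-sparsification lemma.} For $x<2^{-d}\le \xi$, Lemma \ref{lem:main-iterative-sparsification-turn1} applies to $F$. Its four outcomes translate as follows.
\begin{enumerate}[(i)]
\item An $x$-restricted subgraph of size at least $x^{22c_4}|F|\ge x^{23c_4}|G|$ gives outcome (1).
\item A clique or stable set of size at least $(x^{30c_4}|F|)^{c_1}\ge (x^{31c_4}|G|)^{c_1}$ gives outcome (3).
\item A complete or anti-complete $(k,|F|/k^{c_2+27c_4/c_3})$-blockade with $k\ge 2$ has blocks of size at least $\delta|G|/k^{c_2+27c_4/c_3}\ge |G|/k^{d}$, using $\delta\ge 2^{-(d-c_2-27c_4/c_3)}\ge k^{-(d-c_2-27c_4/c_3)}$.
\begin{itemize}
\item If $k\le x^{-1}$, this is a pure $(k,|G|/k^d)$-blockade, giving outcome (2).
\item If $k\ge x^{-1}$, it gives outcome (4).
\end{itemize}
\item An $x$-sparse or pure $(\ell,|F|/\ell^{29c_4})$-blockade with $\ell\in[c^{-1},x^{-2}]$ is handled below.
\end{enumerate}

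\textbf{Main obstacle: case (iv).} The difficulty is that $\ell$ may exceed $x^{-1}$, while outcome (2) requires $k\le x^{-1}$. My plan is to truncate the blockade to its first $k=\min(\ell,\lfloor x^{-1}\rfloor)$ blocks; any subfamily of an $x$-sparse (respectively pure) blockade is again $x$-sparse (respectively pure).

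\begin{itemize}
\item \emph{Block size.} Each block has size at least $\delta|G|/\ell^{29c_4}\ge \delta x^{58c_4}|G|$. If $k=\lfloor x^{-1}\rfloor$, then $k^d\ge (2x)^{-d}$, so we need $\delta x^{58c_4}\ge (2x)^{d}$. This holds because $d\ge 58c_4$ and the factor $2^{-d}$ absorbs $\delta$ once $d\ge\log_2(1/\delta)+58c_4$; I would strengthen the choice of $d$ accordingly. If instead $k=\ell\le x^{-1}$, then $|F|/\ell^{29c_4}\ge\delta|G|/\ell^{29c_4}\ge |G|/\ell^d$ directly.
\item \emph{Lower bound on $k$.} We have $k\ge 2$ since $\ell\ge c^{-1}\ge 2$ and $x^{-1}\ge 2^d$.
\end{itemize}

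The hypothesis $|G|\ge x^{-d}$ is not needed for these bounds. It is only a convenience for the later use of Lemma \ref{lem:pure or sparse to pairwise complete or weakly sparse}, together with the requirement that blocks be nonempty; the latter is ensured because $|G|/k^d\ge x^{d}|G|\ge1$.

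The bookkeeping of exponents in case (iv) is the only delicate point. The rest is routine constant chasing, exactly as in Lemma \ref{lem:main-rodl-turn2}.
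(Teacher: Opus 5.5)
Your argument is correct and is essentially the paper's proof: you pass to a $\xi$-restricted induced subgraph $F$ with $|F|\ge\delta|G|$ via R\"odl's theorem, apply Lemma~\ref{lem:main-iterative-sparsification-turn1} to $F$, and translate its four outcomes using a sufficiently large $d$. The only difference is in case (iv): the paper avoids your case split by keeping $k=\ell^{1/2}\in[2,x^{-1}]$ blocks, so that $|F|/\ell^{29c_4}=|F|/k^{58c_4}\ge |G|/k^{d}$ follows at once from $\delta\ge 2^{58c_4-d}$; your truncation to $\min(\ell,\lfloor x^{-1}\rfloor)$ blocks also works, but your sufficient condition $\delta x^{58c_4}\ge (2x)^d$ holds for all $x<2^{-d}$ exactly when $d(d-58c_4-1)\ge\log_2(1/\delta)$, so you should take e.g.\ $d\ge 58c_4+2+\log_2(1/\delta)$ rather than $58c_4+\log_2(1/\delta)$.
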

    
    \begin{proof}
        Let $c_1,c_2,c_3>0$, $c_4,c_5\geq 5$ be given by Lemma \ref{lem:main-iterative-sparsification-turn1}, $c=2^{-4c_5}$ and $\xi=c^{10}$. 
        By Theorem \ref{thm:Rodl} (with $\epsilon=\xi$), there exists $0<\delta\leq 1$ such that every $\overline{\mathcal{F}}$-free graph has an $\xi$-restricted induced subgraph of size at least $\delta|G|$. 
        Let $d$ be sufficiently large so that $2^{-d}<c^{10}$, $\delta\geq 2^{58c_4-d}$ (which implies $d\geq 58c_4$), $\delta\geq 2^{c_2+\frac{27c_4}{c_3}-d}$.
        In the following we prove that $d$ and $c_1,c_4$ satisfy the lemma.

        Fix an $\overline{\mathcal{F}}$-free graph $G$ with $|G|\geq x^{-d}$.
        Since $G$ is $\overline{\mathcal{F}}$-free, $G$ has an $\xi$-restricted induced subgraph $F$ of size at least $\delta|G|$. 
        Since $x\leq 2^{-d}< c^{10}$, it follows from Lemma \ref{lem:main-iterative-sparsification-turn1} with $G$ replaced by $F$ that one of the following holds.
        
        \begin{itemize}
            \item $F$ has an $x$-restricted induced subgraph $S$ of size at least $x^{22c_4}|F|$.
            
            \item $F$ has a clique or stable set of size at least $(x^{30c_4}|F|)^{c_1}$.
            
            \item $F$ has a complete or anti-complete $(k,\frac{|F|}{k^{c_2+\frac{27c_4}{c_3}}})$-blockade with $k\geq 2$.
            
            \item $F$ has an $x$-sparse or pure $(\ell,|F|/\ell^{29c_4})$ blockade with $\ell\in[c^{-1}, x^{-2}]$.

        \end{itemize}

        If the first bullet holds, then the first outcome of Lemma \ref{main-rodl-turn1} holds since $x^{22c_4}|F|\geq x^{22c_4}\cdot \delta|G|\geq x^{22c_4}\cdot 2^{58c_4-d}|G|\geq x^{23c_4}|G|$, where the last inequality is due to $2^{-d}>x$.

        If the second bullet holds, then the third outcome of Lemma \ref{main-rodl-turn1} holds since $x^{30c_4}|F|\geq x^{30c_4}\cdot \delta|G|\geq x^{30c_4}\cdot 2^{58c_4-d}|G|\geq x^{31c_4}|G|$.

        If the third bullet holds, then the second or the fourth outcome of Lemma \ref{main-rodl-turn1} holds since $\frac{|F|}{k^{c_2+\frac{27c_4}{c_3}}} \geq\frac{\delta|G|}{k^{c_2+\frac{27c_4}{c_3}}}\geq \frac{|G|}{k^{c_2+\frac{27c_4}{c_3}}\cdot 2^{d-(c_2+\frac{27c_4}{c_3})}}\geq \frac{|G|}{k^d}$, where the second inequality is due to $\delta\geq 2^{c_2+\frac{27c_4}{c_3}-d}$.

        If the fourth bullet holds, then the second outcome holds with $k=\ell^{1/2}\in[2,x^{-1}]$, since $\frac{|F|}{\ell^{29c_4}}\geq \frac{\delta|G|}{(\ell^{1/2})^{58c_4}}\geq \frac{|G|}{(\ell^{1/2})^{58c_4}\cdot 2^{d-58c_4}}\geq \frac{|G|}{(\ell^{1/2})^{d}}$. 

        This completes the proof of Lemma \ref{main-rodl-turn1}. 
    \end{proof}
    Now we are ready to prove Lemma \ref{lem:reduce gene-nice to comb-property}, which we restate here for readers' convenience. 

    \begin{lemma}\label{lem:main-impove-blockade-long-turn1}
        Suppose that $\mathcal{F}$ has property $(*)$ and $\mathcal{F}$ is leaf-reducible.  
        Then $\mathcal{F}$ is generalized nice, that is, 
        there exist constants $c_1\geq 3$, $c_2\geq 8$, $c_3,c_4,c_5,c_8>0$, $c_7\ge 4$ and $c_6\geq 1$ such that for every $\overline{\mathcal{F}}$-free graph $G$ and every $\epsilon\in(0,\frac{1}{2})$, either  
        \begin{itemize}
            \item[$(1)$] $G$ has an $(\epsilon^{-1},  \epsilon^{c_1}|G|)$-blockade whose blocks are pairwise complete or weakly $\epsilon^{c_2}$-sparse; 

            \item[$(2)$] $G$ has a clique or stable set of size at least $(\epsilon^{c_3}|G|)^{c_4}$;

            \item[$(3)$] $G$ has a complete or anti-complete $(k,|G|/k^{c_5})$-blockade with $k\geq \epsilon^{-c_6}$; 

            \item[$(4)$] $F$ has an $\epsilon^{c_7}$-restricted induced subgraph $S$ of size at least $\epsilon^{c_8}|G|$. 
        \end{itemize}
        
    \end{lemma}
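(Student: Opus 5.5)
We combine Lemma \ref{main-rodl-turn1} with Lemma \ref{lem:pure or sparse to pairwise complete or weakly sparse}. Let $c_1'>0$, $c_4'\ge 4$ and $d\ge 58c_4'$ be the constants from Lemma \ref{main-rodl-turn1}. Given $\epsilon\in(0,\frac12)$, we apply Lemma \ref{lem:pure or sparse to pairwise complete or weakly sparse} with this $d$ and $x=\epsilon^{5d}$. Its conclusion is an $(\epsilon^{-1},x^{2d}|G|)$-blockade whose blocks are pairwise complete or weakly $\epsilon^{d}$-sparse. Setting $c_1=10d^2$ and $c_2=d$ (note $d\ge 58\cdot 4$, so $c_2\ge 8$ and $c_1\ge 3$), this is exactly outcome $(1)$. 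It remains to check the hypotheses of that lemma, or else land in one of outcomes $(2)$–$(4)$.

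If $|G|<\epsilon^{-10d^2}$, we take a single vertex as a trivially restricted subgraph. This gives outcome $(4)$ with $c_8=10d^2$, since $1\ge \epsilon^{10d^2}|G|$. (Alternatively it gives outcome $(2)$ with any exponents.)

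So assume $|G|\ge \epsilon^{-10d^2}$, and consider any induced subgraph $F$ with $|F|\ge \epsilon^d|G|$. We must show that $F$ has a pure or $x$-sparse $(k,|F|/k^d)$-blockade with $k\in[2,x^{-1}]$, unless one of the other outcomes holds for $G$.

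First, $|F|\ge \epsilon^{d}\epsilon^{-10d^2}\ge x^{-d}$, because $x^{-d}=\epsilon^{-5d^2}$. Also $x=\epsilon^{5d}<2^{-d}$. So Lemma \ref{main-rodl-turn1} applies to $F$ with this $x$, and its four outcomes translate as follows.

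Outcome (2) of Lemma \ref{main-rodl-turn1} is exactly the required blockade.

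Outcome (1) gives an $x$-restricted, hence $\epsilon^{c_7}$-restricted, subgraph of size at least $x^{23c_4'}\epsilon^d|G|$. Here $c_7=5d\ge 4$ and $c_8=115dc_4'+d$.

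Outcome (3) gives a clique or stable set of size at least $(x^{31c_4'}\epsilon^d|G|)^{c_1'}$. This is outcome $(2)$ with $c_3=155dc_4'+d$ and $c_4=c_1'$.

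Outcome (4) gives a complete or anti-complete $(k,|F|/k^d)$-blockade with $k\ge x^{-1}=\epsilon^{-5d}$. Since $|F|\ge\epsilon^d|G|\ge k^{-1}|G|$, it is a $(k,|G|/k^{d+1})$-blockade. This is outcome $(3)$ with $c_5=d+1$ and $c_6=5d\ge1$.

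Thus, unless $(2)$–$(4)$ hold for $G$, every such $F$ has the required blockade, and Lemma \ref{lem:pure or sparse to pairwise complete or weakly sparse} yields $(1)$.

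The main obstacle is purely bookkeeping: choosing one fixed $d$ that simultaneously satisfies Lemma \ref{main-rodl-turn1}'s threshold $|F|\ge x^{-d}$ and the constraints $c_1\ge3$, $c_2\ge 8$, $c_7\ge 4$. The choice $x=\epsilon^{5d}$ with $|G|\ge\epsilon^{-10d^2}$ handles all of these. One must also take the minimum over the two size-lower-bound cases when fixing $c_8$.
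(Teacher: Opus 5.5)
Your proposal is correct and is essentially the paper's proof. It uses the same choice $x=\epsilon^{5d}$ and the same threshold $|G|\ge\epsilon^{-10d^2}$. It applies Lemma \ref{main-rodl-turn1} to each $F$ with $|F|\ge\epsilon^d|G|$, translates its outcomes case by case, and then invokes Lemma \ref{lem:pure or sparse to pairwise complete or weakly sparse}, exactly as the paper does.

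The differences are cosmetic. You dispose of small $G$ with a single vertex as outcome $(4)$, which forces $c_8\ge 10d^2$. The paper instead uses outcome $(2)$ when $|G|\le\epsilon^{-1}$ and singleton blocks for outcome $(1)$ otherwise. Your constants ($c_5=d+1$, $c_8=\max\{10d^2,115dc_4'+d\}$) also differ slightly from the paper's ($c_5=2d$, $c_8=116c_4'd$).
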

    
    \begin{proof}
        Choose $d\geq 58$ and $c'_1>0,c_4'\geq 4$ to satisfy Lemma \ref{main-rodl-turn1} (with $c_1$ replaced by $c_1'$ and $c_4$ replaced by $c'_4$). 
        We claim that $c_1=10d^2, c_2=d,c_3=156c'_4d,c_4=c'_1,c_5=2d$, $c_6=5d$, $c_7=5d$ and $c_8=116c_4'd$ suffice. 
        Let $x=\epsilon^{5d}$. 
        Suppose that none of the outcomes holds. 
        If $|G|\leq \epsilon^{-1}$, then the second outcome of Lemma \ref{lem:main-impove-blockade-long-turn1} holds. 
        If $\epsilon^{-1}\leq |G|\leq \epsilon^{-10d^2}$, then the first outcome of Lemma \ref{lem:main-impove-blockade-long-turn1} holds. 
        So $|G|\geq \epsilon^{-10d^2}$.
        We then present a claim to give some property of an induced subgraph of $G$ with at least $\epsilon^{d}|G|$ vertices.  
        \begin{claim}\label{clm:a large F of G has a complete or anti-complete blockade with small length 2}
            Every induced subgraph $F$ of $G$ of size at least $\epsilon^{d}|G|$ has a pure or $x$-sparse $(k,|F|/k^d)$-blockade with $k\in[2,x^{-1}]$.  
        \end{claim}
        \begin{subproof}[Proof of Claim \ref{clm:a large F of G has a complete or anti-complete blockade with small length 2}]
            Since $|G|\geq \epsilon^{-10d^2}$, $|F|\geq \epsilon^d|G|\geq \epsilon^{-10d^2+d}\geq \epsilon^{-5d^2}=x^{-d}$. 
            Note that $x\in (0,2^{-d})$. 
            By Lemma \ref{main-rodl-turn1}, one of the following holds. 
            
            \begin{itemize}
                \item $F$ has an $x$-restricted induced subgraph $S$ of size at least $x^{23c'_4}|F|$.
            
                \item $F$ has a pure or $x$-sparse $(k,|F|/k^d)$-blockade with $k\in[2,x^{-1}]$. 
            
                \item $F$ has a clique or stable set of size at least $(x^{31c'_4}|F|)^{c_1}$.  
            
                \item $F$ has a complete or anti-complete $(k,|F|/k^d)$-blockade with $k\geq x^{-1}$. 
                
            \end{itemize}

            If the first bullet holds, then the fourth outcome of Lemma \ref{lem:main-impove-blockade-long-turn1} holds since $x^{23c'_4}|F|\geq x^{23c'_4}\cdot \epsilon^d|G|\geq \epsilon^{116c_4'd}|G|$. 

            If the third bullet holds, then the second outcome of Lemma \ref{lem:main-impove-blockade-long-turn1} holds since $x^{31c'_4}|F|\geq \epsilon^{155c'_4d}\cdot \epsilon^d|G|\geq \epsilon^{156c'_4d}|G|$. 
            
            If the fourth bullet holds, then the third outcome of Lemma \ref{lem:main-impove-blockade-long-turn1} holds since $\frac{|F|}{k^d}\geq \frac{x^{1/5}|G|}{k^d}\geq \frac{k^{-1/5}|G|}{k^d}\geq \frac{|G|}{k^{2d}}$. 

            So the second bullet holds, which proves Claim \ref{clm:a large F of G has a complete or anti-complete blockade with small length 2}. 
        \end{subproof}
        By Claim \ref{clm:a large F of G has a complete or anti-complete blockade with small length 2} and Lemma \ref{lem:pure or sparse to pairwise complete or weakly sparse}, $G$ has an $(\epsilon^{-1}, \epsilon^{10d^2}|G|)$-blockade whose blocks are pairwise complete or weakly $\epsilon^d$-sparse, which gives the first outcome of Lemma \ref{lem:main-impove-blockade-long-turn1}. 

        This completes the proof of Lemma \ref{lem:main-impove-blockade-long-turn1}. 
    \end{proof}

\section{Deducing property $(*)$}\label{sec:proving (*)}
    In this section, we prove Lemma \ref{lem:proving (*)} which we restate for reader's convenience.
    
    \begin{lemma}\label{lem:property of co-E}
       
        Let $\mathcal{F}_1$ and $\mathcal{F}_2$ be two finite sets of graphs that satisfy the Erd\H{o}s-Hajnal property. 
        Let $\mathcal{H}$ be a finite set of graphs.
        Suppose that for every $\overline{\mathcal{H}}$-free graph $G$ and every $(\ell,w)$-comb $((a_i,B_i),i\in[\ell])$ in $G$ with $\ell,w\geq 4$, if there exists a special vertex $v\in V(G)\setminus(\{a_i:i\in[\ell]\}\cup\bigcup_{i\in[\ell]}B_i)$ such that $v$ is complete to $\bigcup_{i\in[\ell]}B_i$ and anti-complete to $\{a_i:i\in[\ell]\}$,
        $B_i$ (for each $i\in[\ell]$) can be partitioned into $X_i,Y_i$ such that 
        \begin{itemize}
            \item[$(1)$] $Y_i$ is $\mathcal{F}_1$-free;  
            \item[$(2)$] $X_i$ can be partitioned into $(A^i_1,\ldots, A^i_{t_i})$ such that 
            \begin{itemize}
                \item[$(2.1)$] $(A^i_1,\ldots, A^i_{t_i})$ is a pure blockade; 
                \item[$(2.2)$] the pattern of $(A^i_1,\ldots, A^i_{t_i})$, whose vertex set consists of all blocks of $(A^i_1,\ldots, A^i_{t_i})$ and two vertices are adjacent if and only if their corresponding blocks are complete to each other, is $\mathcal{F}_2$-free; 
                \item[$(2.3)$] for each $j\in [t_i]$ and every vertex $u\in \bigcup_{k\in [\ell]\setminus \{i\}} B_k$, $u$ is pure to $A^i_j$.
            \end{itemize}
        \end{itemize} 
        Then $\mathcal{H}$ satisfies property $(*)$.
    \end{lemma}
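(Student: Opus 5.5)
We need constants $c_1,c_2,c_3$ such that, given the comb $((a_i,B_i),i\in[\ell])$ with $\ell,w\ge 4$ and the special vertex $v$ in an $\overline{\mathcal{H}}$-free graph $G$, one of the three outcomes of property $(*)$ holds. The hypothesis supplies, for each $i$, a partition $B_i=X_i\cup Y_i$ with the properties $(1)$ and $(2)$. Since $|B_i|\ge w$, each $i$ has either $|Y_i|\ge w/2$ or $|X_i|\ge w/2$. By pigeonhole on $i$, one of the two cases occurs for at least $\ell/2$ indices, and we restrict to those indices. Let $d_1,d_2>0$ be Erd\H{o}s-Hajnal constants for $\mathcal{F}_1,\mathcal{F}_2$.

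\emph{Case A: $|Y_i|\ge w/2$ for some $i$.} Since $Y_i$ is $\mathcal{F}_1$-free, it contains a clique or stable set of size $(w/2)^{d_1}\ge w^{d_1/2}$ (using $w\ge 4$). This gives the first outcome with $c_1\le d_1/2$. So this case needs only a single index, and we may assume $|X_i|\ge w/2$ for every $i\in[\ell]$.

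\emph{Case B: $|X_i|\ge w/2$ for all $i$.} Fix $i$ and consider the pure blockade $(A^i_1,\ldots,A^i_{t_i})$. Its pattern $P_i$ is $\mathcal{F}_2$-free.

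First suppose some block is large, say $|A^i_j|\ge |X_i|/\ell^{2}$ for a suitable $j=j(i)$ (the threshold is to be tuned). By $(2.3)$, every vertex of $\bigcup_{k\ne i}B_k$ is pure to $A^i_j$. The idea is to build a pure $(\ell,w/\ell^2)$-blockade whose blocks are large pieces $A^i_{j(i)}$, one per $i$, or subsets of them. Between $A^i_{j(i)}$ and $A^k_{j(k)}$, each vertex of $A^k_{j(k)}$ is pure to $A^i_{j(i)}$, and each vertex of $A^i_{j(i)}$ is pure to $A^k_{j(k)}$. A short argument then shows the pair is pure: if $x\in A^k$ is complete to $A^i$ and $x'\in A^k$ is anticomplete to $A^i$, then any $y\in A^i$ is mixed on $\{x,x'\}\subseteq A^k$, contradicting $(2.3)$. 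So the large blocks form a pure $(\ell,w/(2\ell^2))$-blockade. The factor $2$ must be absorbed, so I would take the threshold $|X_i|/\ell$ and use $|X_i|/\ell\ge w/(2\ell)\ge w/\ell^2$ for $\ell\ge 2$. This gives the third outcome.

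Otherwise, for some $i$ all blocks satisfy $|A^i_j|<|X_i|/\ell$, so $t_i>\ell$. We apply Erd\H{o}s-Hajnal to the pattern $P_i$, which is still unbalanced since block sizes vary. To handle this, I would dyadically group the blocks by size: some size class $[s,2s)$ carries at least a $1/\log$ fraction of $X_i$. Then we either take many blocks of roughly equal size or use a weighted version. Let $m$ be the number of blocks in that class. The pattern restricted to these blocks has a clique or stable set $S$ of size $m^{d_2}$. The blocks in $S$ are pairwise complete or pairwise anticomplete, since the blockade is pure. So $(A^i_j:j\in S)$ is a complete or anti-complete $(k,s)$-blockade with $k=m^{d_2}$. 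Here $ms\gtrsim w/\log w$ and $m\ge \ell$ roughly. We need $s\ge w/k^{c_2}$, that is, $k^{c_2}\ge w/s\approx m\log w$, which holds with $c_2\approx 2/d_2$ once the log is absorbed. We also need $k\ge \ell^{c_3}$; choosing $c_3=d_2/2$ works if $m\ge \ell$ in the chosen class.

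The main obstacle is this last bookkeeping. The dyadic class carrying the most mass may contain few blocks, fewer than $\ell$. In that case each of its blocks is large, at least about $|X_i|/(m\log w)$, and we must fall back to the large-block argument with a weaker threshold. So I would set the threshold as $|X_i|/\ell^{2}$ and use the pure $(\ell,w/\ell^2)$ outcome, tuning the exponents until the two subcases mesh.
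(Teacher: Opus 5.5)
Your Case A and your large-block case are correct and agree with the paper. In particular, your argument that $A^i_{j(i)}$ and $A^k_{j(k)}$ are pure to each other by $(2.3)$ is exactly what the paper uses tacitly.

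The gap is in the remaining case: an index $i$ all of whose blocks are small. Choosing the heaviest dyadic size class costs a factor $L$, the number of size classes. This is about $\log_2 w$; with your threshold $|X_i|/\ell$ it is even $\log_2(|X_i|/\ell)$, and $|X_i|$ is not bounded in terms of $w$. In property $(*)$ the parameters $\ell$ and $w$ are unrelated; in the application $w=y^4|G|/\ell^2$ is arbitrarily large compared with $\ell$. So $L$ cannot be absorbed. From $2ms>|X_i|/L\ge w/(2L)$ you need roughly $m^{d_2c_2-1}\ge 4L$. But the heaviest class is only guaranteed $m>\ell/L$ blocks, which says nothing once $L>\ell$.

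Concretely, take one block of size about $w/L$ together with $L-1$ classes of smaller blocks, each class of total size about $w/(2L)$. Once $L>\ell^2$, every block is below both $w/(2\ell)$ and $w/\ell^2$. Yet the heaviest class is that single block, and Erd\H{o}s--Hajnal applied to one vertex yields nothing. Your fallback does not help either. The pure-blockade outcome needs a large block in \emph{every} $B_k$, and you are in this case precisely because index $i$ has none. Lowering the threshold to $w/\ell^2$ merely replaces $\ell$ by $\ell^2$ in the bound $m>\ell/L$, so no tuning of exponents makes the two subcases mesh.

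What is missing is a multiscale count in which failure at every scale gives a convergent geometric series, so that the number of scales never enters. The paper does this by rank. Sort $|A^i_1|\ge|A^i_2|\ge\cdots$, and recall that all blocks have size at most $w/(2\ell)$.
\begin{itemize}
\item If for some $k$ the $\lfloor\ell^{k/2}\rfloor$ largest blocks all have size at least $w/\ell^{5k/2}$, then Erd\H{o}s--Hajnal for the $\mathcal{F}_2$-free pattern gives $\lfloor\ell^{k/2}\rfloor^{d_2}\ge\ell^{d_2k/4}$ of them that are pairwise complete or pairwise anticomplete. Each has size $w/\ell^{5k/2}=w/(\ell^{d_2k/4})^{10/d_2}$. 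This is the second outcome with $c_2=10/d_2$ and $c_3=d_2/4$.
\item Otherwise, for every $k$ the blocks of rank in $(\ell^{k/2},\ell^{(k+1)/2}]$ have size below $w/\ell^{5k/2}$. Then $|X_i|\le \ell^{1/2}\cdot\frac{w}{2\ell}+\sum_{k\ge 1}\ell^{(k+1)/2}\,w\,\ell^{-5k/2}<\frac{w}{2}$ for $\ell\ge 4$, a contradiction.
\end{itemize}

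Your size-based idea can be repaired in the same spirit without selecting any class. Let $N(s)$ be the number of blocks of size at least $s$, and take $c_3=d_2/4$ and $c_2=10/d_2$. Failure of the second outcome at every scale forces $N(s)<\ell^{1/4}+(w/s)^{1/10}$. Then $|X_i|=\int_0^{w/(2\ell)}N(s)\,ds<\frac{w}{2}\ell^{-3/4}+\frac{10}{9}\,w\,(2\ell)^{-9/10}<\frac{w}{2}$ for $\ell\ge 4$, again a contradiction.
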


    \begin{proof}
    We need to show that
    there exist $c_1,c_2,c_3>0$ such that the following holds for every $\overline{\mathcal{H}}$-free graph $G$. 
    If there is a $(\ell,w)$-comb $((a_i,B_i),i\in[\ell])$ in $G$ with $\ell,w\geq 4$, and there exists $v\in V(G)\setminus(\{a_i:i\in[\ell]\}\cup\bigcup_{i\in[\ell]}B_i)$ such that $v$ is complete to $\bigcup_{i\in[\ell]}B_i$ and anti-complete to $\{a_i:i\in[\ell]\}$, then one of the following holds.
    
        \begin{itemize}
            
            \item[$(a)$] $G$ has a clique or stable set of size at least $w^{c_1}$.
            
            \item[$(b)$] $G$ has a complete or anti-complete $(k,w/k^{c_2})$-blockade with $k\geq \ell^{c_3}$.
            
            \item[$(c)$] $G$ has a pure $(\ell,w/\ell^{2})$-blockade.

        \end{itemize}
        
        Let $c=\min\{c(\mathcal{F}_1),c(\mathcal{F}_2)\}$ where $c(\mathcal{F}_i)$ is the Erd\H{o}s-Hajnal constant for $\mathcal{F}_i$.
        We claim that $c_1=\frac{c}{2}$, $c_2=\frac{10}{c}$ and $c_3=\frac{c}{4}$ suffice. 
        Suppose that none of the outcomes holds.
        Since $|B_i|\geq w$ for each $i\in [\ell]$, either $|X_i|\geq \frac{w}{2}$ or $|Y_i|\geq \frac{w}{2}$. 
        Suppose first that $|Y_i|\geq \frac{w}{2}$ for some $i\in [\ell]$. 
        Since $Y_i$ is $\mathcal{F}_1$-free, $Y_i$ has a clique or stable set with at least $(\frac{w}{2})^c$ vertices.
        Since $w\geq 4$, $(\frac{w}{2})^c\geq w^{\frac{c}{2}}=w^{c_1}$. 
        So $(a)$ holds. 
        Thus, we may assume that $|X_i|\geq \frac{w}{2}$ for each $i\in [\ell]$.  
        \begin{claim}\label{clm:upbound of block A}
            There exists some index $i\in [\ell]$ such that every block of the pure blockade $(A^i_1,\ldots, A^i_{t_i})$ has size at most $\frac{w}{2\ell}$. 
        \end{claim}
        \begin{subproof}[Proof of Claim \ref{clm:upbound of block A}]
            Suppose to the contrary that there exists a block of $(A^i_1,\ldots, A^i_{t_i})$, say $A_1^i$, with at least $\frac{w}{2\ell}$ vertices for every $i\in [\ell]$. 
            By (2.3), $(A_1^1,\ldots, A_1^\ell)$ is a pure $(\ell, \frac{w}{2\ell})$-blockade. 
            Since $\ell\geq 4$, $2\ell\leq \ell^2$ and so
            $(A_1^1,\ldots, A_1^\ell)$ is also a pure $(\ell, \frac{w}{\ell^2})$-blockade. 
            So $(c)$ holds, a contradiction. 
        \end{subproof}
        By Claim \ref{clm:upbound of block A}, there is an index $i\in [\ell]$ such that $|A_j^i|\leq \frac{w}{2\ell}$ for every $j\in [t_i]$. 
        Since $|X_i|\geq \frac{w}{2}$, $t_i\geq \ell$. 
        Without loss of generality, we may assume that $|A_1^i|\geq |A_2^i|\geq \ldots\geq |A_{t_i}^i|$. 
        Let $q=\lceil \log_{\ell^{1/2}}t_i\rceil$. 
        Let $S_1=\bigcup_{j\leq \ell^{1/2}}A_j^i$, and $$S_k=\left(\bigcup_{j\leq \min\{\ell^{k/2},t_i\}}A^i_j\right)\setminus \left(\bigcup_{k'<k}S_{k'}\right) \textnormal{ for } k=2,\ldots,q.$$ 
        Then $|X_i|=\sum_{k=1}^q |S_k|$.
        \begin{claim}\label{clm:upbound of block in S}
            For each $k\in [q-1]$, there exists a block of $S_k$ of size less than $\frac{w}{\ell^{5k/2}}$. 
        \end{claim}
        \begin{subproof}[Proof of Claim \ref{clm:upbound of block in S}]
            Suppose to the contrary that every block of $S_k$ has at least $\frac{w}{\ell^{5k/2}}$ vertices.  
            Since $\ell, w\geq 4$, $\lfloor \ell^{k/2}\rfloor^c\geq \ell^{ck/4}$.
            Since the pattern of $(A^i_1,\ldots, A^i_{t_i})$, whose vertex set consists of all blocks of $(A^i_1,\ldots, A^i_{t_i})$ and two vertices are adjacent if and only if their corresponding blocks are complete to each other, is $\mathcal{F}_2$-free, there is a complete or anti-complete $(\ell^{ck/4}, \frac{w}{\ell^{5k/2}})$-blockade in $\bigcup_{j\leq k} S_j$.
            Since $c_2=\frac{10}{c}$, this blockade is also a $(\ell^{ck/4}, \frac{w}{(\ell^{ck/4})^{c_2}})$-blockade.
            Then $(b)$ holds, a contradiction. 
        \end{subproof}
        By Claim \ref{clm:upbound of block in S}, for each $k\in[q-1]$, there is a block in $S_k$ of size less than $\frac{w}{\ell^{5k/2}}$. 
        So every block in $S
        _{k+1}$ has at most $\frac{w}{\ell^{5k/2}}$ vertices for each $k\in[q-1]$. 
        Thus, 
        \begin{align*}
            |X_i|&=\sum_{k=1}^q |S_k|\\
                 &\leq |S_1|+\sum_{k=1}^{q-1} (\ell^{(k+1)/2})\cdot \frac{w}{\ell^{5k/2}}\\
                 &\leq \ell^{1/2}\cdot \frac{w}{2\ell}+\sum_{k=1}^{q-1} (\ell^{(k+1)/2})\cdot \frac{w}{\ell^{5k/2}}\\
                 &= w(\frac{1}{2\ell^{1/2}}+\sum_{k=1}^{q-1} (\frac{1}{\ell^{1/2}})^{5k-k-1})\\
                 &\leq w(\frac{1}{4}+\sum_{k=1}^{q-1}(\frac{1}{2})^{k+2})\\
                 &<\frac{w}{2}. 
        \end{align*}
        This contradicts the fact that $|X_i|\geq \frac{w}{2}$. 
        This completes the proof of Lemma \ref{lem:property of co-E}.
    \end{proof}

    \noindent {\bf Remark.} If we set $\mathcal{F}_2=\{2K_1\}$, $Y_i=\emptyset$ and partition $X_i$ into anti-connected components, both $P_5$ and bull satisfy the conditions of Lemma \ref{lem:property of co-E}. Moreover, it is easy to see that each of $P_5$ and bull is leaf-reducible and wonderful. 
    This already proves that Erd\H{o}s-Hajnal conjecture holds for $P_5$ and bull.
    
    \section{Completing the proof}\label{sec:deducing structure}

In this section, we prove that co-$E$-free graphs and co-Bird-free graphs satisfy the hypothesis of Lemma \ref{lem:property of co-E}, which implies that $E$-graph and Bird satisfy the Erd\H{o}s-Hajnal property. 
We first define the quotient blockade of a blockade and derive some useful properties of quotient blockades.

Suppose that $\mathcal{L}=(L_1,\ldots, L_n)$ is a blockade. 
        We say $L_i$ and $L_j$ have {\em relation $\mathcal{M}$} if and only if $i= j$ or there is a block sequence $L_i=L_{r_1},L_{r_2},\ldots,L_{r_m}=L_{j}$ such that for each $k\in [m-1]$, $L_{r_{k}}$ and $L_{r_{k+1}}$ are mixed. 
        It is easy to check that $\mathcal{M}$ is an equivalence relation. Let $\mathcal{L}/\mathcal{M}$ be the {\em quotient blockade} of $\mathcal{L}$ where each block of $\mathcal{L}/\mathcal{M}$ is the union of all blocks in an equivalence class of $\mathcal{M}$.
        We present two properties of the quotient blockade. 
        \begin{lemma}\label{clm:property of levels}
            Suppose that $\mathcal{L}$ is a blockade such that each block of $\mathcal{L}$ is connected (anti-connected). 
            Then  
            \begin{itemize}
                \item[$(1)$] Each block of $\mathcal{L}/\mathcal{M}$ is also connected (resp. anti-connected).
                
                \item[$(2)$] Each two blocks of $\mathcal{L}$ that are contained in different blocks of $\mathcal{L}/\mathcal{M}$ are pure to each other.
                
                \item[$(3)$] If there are a block $D$ of $\mathcal{L}/\mathcal{M}$ and a vertex $u\notin D$ that is mixed on $D$ but is pure to each block of $\mathcal{L}$ contained in $D$, then there are two mixed blocks of $\mathcal{L}$ contained in $D$ such that $u$ is complete to one of the blocks and anti-complete to the other.
    
            \end{itemize}
        \end{lemma}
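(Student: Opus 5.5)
The three parts are proved separately, each directly from the definition of $\mathcal{M}$ as the transitive closure of ``mixed''. For every block $D$ of $\mathcal{L}/\mathcal{M}$ I fix its equivalence class of $\mathcal{L}$-blocks and look at the auxiliary ``mixing graph'' on that class, in which two blocks are joined when they are mixed. By definition of $\mathcal{M}$, this mixing graph is connected. I treat only the connected case of (1); the anti-connected case follows by passing to $\overline{G}$, since complementation preserves mixedness.

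\textbf{Part (1).} Let $L_r, L_s \subseteq D$ be adjacent in the mixing graph. Since they are mixed, some $G$-edge joins them; otherwise they would be anti-complete. So $G[L_r\cup L_s]$ is connected, because each of $L_r, L_s$ is connected. Walk along a path of the mixing graph between any two blocks of $D$, gluing consecutive blocks with these edges; this gives a path in $G[D]$ between any two vertices of $D$. In the anti-connected case the argument is the same: since mixed blocks are not complete, a non-edge joins them, and we glue in $\overline{G}$.

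\textbf{Part (2).} This is immediate. If two $\mathcal{L}$-blocks lying in different classes were mixed, they would be $\mathcal{M}$-related by a chain of length one, and hence lie in the same class.

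\textbf{Part (3).} This is the only step with content. Colour each $\mathcal{L}$-block inside $D$ by whether $u$ is complete or anti-complete to it; by hypothesis every block receives exactly one colour. Because $u$ is mixed on $D$, both colours occur. Now the mixing graph on the class of $D$ is connected and has both colours present. Therefore some edge of it joins a complete-coloured block to an anti-complete-coloured block: otherwise each colour class would be a union of components, contradicting connectivity. That edge is a pair of mixed blocks $L_r, L_s \subseteq D$ with $u$ complete to $L_r$ and anti-complete to $L_s$, which is what (3) asks for.

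The only place needing care is making sure the connected and anti-connected cases of (1) use the right kind of pair (an edge, respectively a non-edge) between mixed blocks. Part (3) does not actually require the blocks of $\mathcal{L}$ to be connected; it uses only the connectivity of the mixing graph.
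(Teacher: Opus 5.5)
Your proposal is correct and follows essentially the same route as the paper. The paper dismisses (1) and (2) as immediate from the definition, and you fill them in correctly, using an edge between mixed blocks in the connected case and a non-edge in the anti-connected case. For (3), your observation that a connected mixing graph with both colours present has a bichromatic edge is the same as the paper's argument: it walks along a chain of pairwise-mixed blocks from a block to which $u$ is complete to one to which $u$ is anti-complete, and finds the index where the colour switches.
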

        
        \begin{proof}
            The first two statements of Lemma \ref{clm:property of levels} follow immediately from the definition of $\mathcal{L}/\mathcal{M}$. 
            We now prove (3). Since $u$ is mixed on $D$ but is pure to each block of $\mathcal{L}$ contained in $D$, there are two blocks of $\mathcal{L}$, say $A_1,A_2$, that are contained in $D$ such that $u$ is complete to $A_1$ and anti-complete to $A_2$. 
            Since $D$ is an equivalence class of $\mathcal{L}$ with respect to $\mathcal{M}$, there is a block sequence $A_1=A_{r_1},A_{r_2},\ldots,A_{r_m}=A_{2}$ such that for each $k\in [m-1]$, $A_{r_{k}}$ and $A_{r_{k+1}}$ are mixed. 
            So there is an index $k$ such that $u$ is complete to $A_{r_{k}}$ and anti-complete to $A_{r_{k+1}}$.  
            This proves Lemma \ref{clm:property of levels} (3). 
        \end{proof}

        \begin{lemma}\label{clm:inductive}
            Suppose that $\mathcal{L}$ is a blockade in $G$ such that each block of $\mathcal{L}$ is connected (anti-connected). 
            If there are two mixed blocks $D_1,D_2$ of $\mathcal{L}/\mathcal{M}$ and three vertices $x,y,u\notin D_1\cup D_2$ such that 
            \begin{itemize}
                \item $x,y$ are two non-adjacent vertices that are complete to $D_1\cup D_2$,
                \item $u\in N(x)\setminus N(y)$ is complete to $D_1$ but is anti-complete to $D_2$, and
                \item no vertex in $D_1$ can mix on $D_2$,
            \end{itemize}
            then there are two mixed blocks $A_1,A_2$ of $\mathcal{L}$ that contained in $D_1$ and three vertices $x',y',u'\notin A_1\cup A_2$ such that 
            \begin{itemize}
                \item[$(1)$] $x',y'$ are two non-adjacent vertices that are complete to $A_1\cup A_2$ and
                \item[$(2)$] $u'\in N(x')\setminus N(y')$ is complete to $A_1$ but is anti-complete to $A_2$.  
            \end{itemize}
        \end{lemma}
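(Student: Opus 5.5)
The plan is to descend from the quotient blocks $D_1,D_2$ to two blocks of $\mathcal{L}$ inside $D_1$ using Lemma \ref{clm:property of levels}(3). For the new triple I will take $x'=y$, $y'=u$, and $u'$ a suitably chosen vertex of $D_2$.

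\emph{Step 1: every vertex of $D_2$ is mixed on $D_1$.} Since $D_1$ and $D_2$ are mixed, some vertex of $D_1$ has a neighbour in $D_2$, and some vertex of $D_1$ has a non-neighbour in $D_2$. By the third hypothesis, no vertex of $D_1$ is mixed on $D_2$. So there are vertices $p,q\in D_1$ with $p$ complete to $D_2$ and $q$ anti-complete to $D_2$. Fix any $d_2\in D_2$. Then $d_2$ is adjacent to $p$ and non-adjacent to $q$, so $d_2$ is mixed on $D_1$.

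\emph{Step 2: apply Lemma \ref{clm:property of levels}(3).} The vertex $d_2$ lies in some block $L$ of $\mathcal{L}$ with $L\subseteq D_2$. By Lemma \ref{clm:property of levels}(2), $L$ is pure to every block of $\mathcal{L}$ contained in $D_1$. In particular $d_2$ is pure to each such block, while by Step 1 it is mixed on $D_1$ as a whole. Lemma \ref{clm:property of levels}(3), applied with $D=D_1$ and $u=d_2\notin D_1$, gives two mixed blocks $A_1,A_2$ of $\mathcal{L}$ contained in $D_1$ such that $d_2$ is complete to $A_1$ and anti-complete to $A_2$.

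\emph{Step 3: choose the new triple.} Set $x'=y$, $y'=u$ and $u'=d_2$. None of these lies in $A_1\cup A_2\subseteq D_1$, because $y,u\notin D_1\cup D_2$ and $d_2\in D_2$.

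We check condition (1). The vertices $y$ and $u$ are non-adjacent because $u\notin N(y)$. The vertex $y$ is complete to $D_1\cup D_2\supseteq A_1\cup A_2$. The vertex $u$ is complete to $D_1\supseteq A_1\cup A_2$.

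We check condition (2). The vertex $d_2$ is adjacent to $x'=y$, since $y$ is complete to $D_2$. It is non-adjacent to $y'=u$, since $u$ is anti-complete to $D_2$. By Step 2, $d_2$ is complete to $A_1$ and anti-complete to $A_2$.

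The only step that needs any real idea is Step 1. It converts the hypothesis that no vertex of $D_1$ mixes on $D_2$ into a vertex of $D_2$ that mixes on $D_1$, which is exactly what makes Lemma \ref{clm:property of levels}(3) applicable. The rest is a direct check of adjacencies. The connectivity (or anti-connectivity) assumption on the blocks of $\mathcal{L}$ is only needed so that Lemma \ref{clm:property of levels} may be invoked, and the vertex $x$ is not used at all.
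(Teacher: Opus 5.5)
Your proof is correct and follows the paper's argument: you take $(x',y',u')=(y,u,d_2)$ with $d_2\in D_2$ mixed on $D_1$, and obtain $A_1,A_2$ from Lemma~\ref{clm:property of levels}(2) and (3), exactly as the paper does. Your Step~1 spells out the paper's one-line observation that, since no vertex of $D_1$ mixes on $D_2$, some vertex of $D_2$ must mix on $D_1$, and slightly strengthens it to every vertex of $D_2$.
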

        
        \begin{proof}
            Since $D_1$ and $D_2$ are mixed, either there is a vertex $b_1\in D_1$ mixed on $D_2$ or there is a vertex $b_2\in D_2$ mixed on $D_1$. By our assumption, 
            there is a vertex $b_2\in D_2$ mixed on $D_1$. 
            By Lemma \ref{clm:property of levels} (2) and (3), there are two mixed blocks of $\mathcal{L}$, say $A_1,A_2$, contained in $D_1$ such that $b_2$ is complete to $A_1$ but is anti-complete to $A_2$. 
            Then $(x',y',u')=(y,u,b_2)$ are the desired vertices.
        \end{proof}

    \subsection{E-graph}

    In this subsection, we prove that co-$E$-graph-free graphs satisfy the hypothesis of  Lemma \ref{lem:property of co-E}.

    Let {\em chair} be the 5-vertex graph obtained from an induced $P_4$ by adding a new vertex pendent to the second vertex of the $P_4$ and {\em co-chair} be the complement of the chair graph.
    Let $H_0$ be a 5-wheel labeled as in Figure \ref{fig:P7-star graphs}. 
    For each integer $1\leq i\leq 5$, let $H_i$ be the graph obtained from $H_{i-1}$ by adding a vertex $v_i'$ and an edge $v_iv_i'$, see Figure \ref{fig:P7-star graphs} for an illustration of chair, $H_0,H_1$ and $H_5$. 
    \begin{figure}[h!]
        \centering
        \begin{tikzpicture}[scale=0.6]
        \tikzstyle{vertex}=[circle, draw, fill=white, inner sep=1pt, minimum size=5pt]
        
            \node[vertex](1) at (-.6,-1.1){\scriptsize $v_1$};
            \node[vertex](2) at (-.6,0){\scriptsize $v_2$};
            \node[vertex](3) at (.6,0){\scriptsize $v_3$};
            \node[vertex](4) at (.6,1.1){\scriptsize $v_4$};
            \node[vertex](5) at (.6,-1.1){\scriptsize $v_5$};

            \node at (-90:3) {The chair graph};
            
            \foreach \from/\to in {1/2,2/3,3/4,3/5}
            \draw (\from) -- (\to);

        \end{tikzpicture}
        \hspace{10mm}
        \begin{tikzpicture}[scale=0.6]
        \tikzstyle{vertex}=[circle, draw, fill=white, inner sep=1pt, minimum size=5pt]
        
            \node[vertex](0) at (0:0){\scriptsize $w$};
            
            \node[vertex](1) at (90:1.2){\scriptsize $v_1$};
            \node[vertex](2) at (90-72:1.2){\scriptsize $v_2$};
            \node[vertex](3) at (90-144:1.2){\scriptsize $v_3$};
            \node[vertex](4) at (90-216:1.2){\scriptsize $v_4$};
            \node[vertex](5) at (90+72:1.2){\scriptsize $v_5$};

            \node at (-90:3) {$H_0$};
            
            \foreach \from/\to in {0/1,0/2,0/3,0/4,0/5,1/2,2/3,3/4,4/5,1/5}
            \draw (\from) -- (\to);

        \end{tikzpicture}
        \hspace{10mm}
        \begin{tikzpicture}[scale=0.6]
        \tikzstyle{vertex}=[circle, draw, fill=white, inner sep=1pt, minimum size=5pt]
        
            \node[vertex](0) at (0:0){\scriptsize $w$};
            
            \node[vertex](1) at (90:1.2){\scriptsize $v_1$};
            \node[vertex](2) at (90-72:1.2){\scriptsize $v_2$};
            \node[vertex](3) at (90-144:1.2){\scriptsize $v_3$};
            \node[vertex](4) at (90-216:1.2){\scriptsize $v_4$};
            \node[vertex](5) at (90+72:1.2){\scriptsize $v_5$};

            \node[vertex](11) at (90:2.2){\scriptsize $v_1'$};
            
            \node at (-90:3) {$H_1$};
            
            \foreach \from/\to in {0/1,0/2,0/3,0/4,0/5,1/2,2/3,3/4,4/5,1/5}
            \draw (\from) -- (\to);

            \foreach \from/\to in {1/11}
            \draw (\from) -- (\to);
        \end{tikzpicture}
        \hspace{10mm}
        \begin{tikzpicture}[scale=0.6]
        \tikzstyle{vertex}=[circle, draw, fill=white, inner sep=1pt, minimum size=5pt]
        
            \node[vertex](0) at (0:0){\scriptsize $w$};
            
            \node[vertex](1) at (90:1.2){\scriptsize $v_1$};
            \node[vertex](2) at (90-72:1.2){\scriptsize $v_2$};
            \node[vertex](3) at (90-144:1.2){\scriptsize $v_3$};
            \node[vertex](4) at (90-216:1.2){\scriptsize $v_4$};
            \node[vertex](5) at (90+72:1.2){\scriptsize $v_5$};

            \node[vertex](11) at (90:2.2){\scriptsize $v_1'$};
            \node[vertex](21) at (90-72:2.2){\scriptsize $v_2'$};
            \node[vertex](31) at (90-144:2.2){\scriptsize $v_3'$};
            \node[vertex](41) at (90-216:2.2){\scriptsize $v_4'$};
            \node[vertex](51) at (90+72:2.2){\scriptsize $v_5'$};
            
            \node at (-90:3) {$H_5$};
            
            \foreach \from/\to in {0/1,0/2,0/3,0/4,0/5,1/2,2/3,3/4,4/5,1/5}
            \draw (\from) -- (\to);

            \foreach \from/\to in {1/11,2/21,3/31,4/41,5/51}
            \draw (\from) -- (\to);
        \end{tikzpicture}
        \caption{The chair graph, $H_0,H_1$ and $H_5$.}
        \label{fig:P7-star graphs}
    \end{figure}

    Let us first record the following simple lemma. 
    \begin{lemma}\label{lem:new EH}
        $\{H_5, \text{co-}E\}$ has the Erd\H{o}s-Hajnal property. 
    \end{lemma}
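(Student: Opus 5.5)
The plan is to peel the pendant vertices $v_5',v_4',\ldots,v_1'$ off $H_5$ one at a time using Corollary~\ref{coro:degree 1 and degree n-2}. This reduces the statement to a family containing only the $5$-wheel $H_0$ and co-$E$. The wheel is then handled by Theorem~\ref{thm:E-H-substitute} together with Theorem~\ref{thm:C5 has EH property}.

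\emph{Inductive step.} For $i=5,4,\ldots,1$, I will show that if $\{H_{i-1},\text{co-}E\}$ has the Erd\H{o}s-Hajnal property, then so does $\{H_i,\text{co-}E\}$.
\begin{enumerate}
\item Apply Corollary~\ref{coro:degree 1 and degree n-2} with $\mathcal{F}=\{H_i,\text{co-}E\}$, $F_1=H_i$ and $v_1=v_i'$. Here $v_i'$ has degree one in $H_i$ by construction, and $H_i-v_i'=H_{i-1}$.
\item Take $F_2=E$, so that $\overline{F_2}=\text{co-}E\in\mathcal{F}$. For $v_2$, use the degree-one vertex of $E$ adjacent to its degree-three vertex (vertex $6$ in Figure~\ref{fig:E}). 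Deleting it leaves an induced $P_5$, so $\overline{F_2'}=\overline{P_5}$.
\item The corollary then needs two hypotheses. The first is that $\mathcal{F}_1=\{H_{i-1},\text{co-}E\}$ has the Erd\H{o}s-Hajnal property, which is the induction hypothesis. The second is that $\mathcal{F}_2=\{H_i,\overline{P_5}\}$ has it.
\item The second hypothesis is immediate. Every $\{H_i,\overline{P_5}\}$-free graph is $\overline{P_5}$-free, and $\overline{P_5}$ has the Erd\H{o}s-Hajnal property by Theorem~\ref{thm:P5 has EH property}. So $\mathcal{F}_2$ has the property with the same constant.
\end{enumerate}
Alternatively, deleting the other leaf of $E$ would give a chair, and one could try to use the co-chair instead; the $P_5$ choice avoids needing anything new.

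\emph{Base case.} It remains to show that $\{H_0,\text{co-}E\}$ has the Erd\H{o}s-Hajnal property. It suffices to show this for $H_0$ alone, since forbidding more graphs only helps. The wheel $H_0$ is obtained from $K_2$ by substituting $C_5$ for one of its vertices. Both $K_2$ and $C_5$ have the Erd\H{o}s-Hajnal property, the latter by Theorem~\ref{thm:C5 has EH property}. Theorem~\ref{thm:E-H-substitute} then gives the property for $H_0$.

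\emph{Main obstacle.} The only step needing care is checking the hypotheses of the corollary at each stage. One must confirm that $v_i'$ really has degree one in $H_i$, and that the chosen vertex of $E$ has degree one in $E$, so that in the notation of Corollary~\ref{coro:degree 1 and degree n-2} it is $\overline{F_2}=\text{co-}E$ that lies in $\mathcal{F}$. One must also confirm that $E$ minus that leaf is indeed $P_5$. All of these are direct readings of Figures~\ref{fig:E} and~\ref{fig:P7-star graphs}, so no substantial difficulty is expected.
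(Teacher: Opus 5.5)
Your proof is correct and has the same skeleton as the paper's. The base case $\{H_0,\text{co-}E\}$ comes from Theorem~\ref{thm:E-H-substitute} and Theorem~\ref{thm:C5 has EH property}. The induction then applies Corollary~\ref{coro:degree 1 and degree n-2} with $F_1=H_i$, $v_1=v_i'$ and $F_2=E$ for $i=1,\ldots,5$.

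The one difference is which leaf of $E$ serves as $v_2$:
\begin{itemize}
\item You delete the leaf at the degree-three vertex, which leaves $P_5$. Your second family is therefore $\{H_i,\overline{P_5}\}$, and you need the Nguyen--Scott--Seymour theorem (Theorem~\ref{thm:P5 has EH property}).
\item The paper deletes an end of the long path, which leaves the chair. Its second family is $\{H_i,\text{co-chair}\}$, and this needs only Theorem~\ref{thm:E-H-substitute}. The two leaves at the degree-three vertex of the chair form a homogeneous set, so the chair is $P_4$ with an end vertex replaced by a stable set of size two.
\end{itemize}

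So your closing remark has it backwards. The chair route is the one that needs nothing beyond elementary substitution, while your route imports a deep result. Both arguments are logically valid, since the $P_5$ theorem is available. The paper's choice keeps the $E$-graph argument independent of the $P_5$ theorem, which fits its aim of recovering that theorem within the same framework rather than relying on it.
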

    
    \begin{proof}
        By Theorems \ref{thm:E-H-substitute} and \ref{thm:C5 has EH property}, $H_0$ and co-chair have the Erd\H{o}s-Hajnal property. 
        It follows that $\{H_0,\text{co-}E\}$ and $\{H_i,\text{co-chair}\}$ have the Erd\H{o}s-Hajnal property for every $i\in [5]$. 
        Assume that $\{H_{i-1},\text{co-}E\}$ has the Erd\H{o}s-Hajnal property for $1\leq i\leq 5$. 
        By Corollary \ref{coro:degree 1 and degree n-2} with $F_1=H_i$ and $F_2$ being the $E$-graph, $\{H_i, \text{co-}E\}$ has the Erd\H{o}s-Hajnal property.  
        So $\{H_5, \text{co-}E\}$ has the Erd\H{o}s-Hajnal property. 
    \end{proof}

\begin{lemma}\label{lem:coro-lem for E}
        Let $G$ be a co-$E$-free graph with a $(\ell,w)$-comb $((a_i,B_i),i\in[\ell])$ in $G$ and a vertex $v\in V(G)\setminus(\{a_i:i\in[\ell]\}\cup\bigcup_{i\in[\ell]}B_i)$ such that $v$ is complete to $\bigcup_{i\in[\ell]}B_i$ and anti-complete to $\{a_i:i\in[\ell]\}$. 
        For each $i\in[\ell]$, $B_i$ can be partitioned into $X_i,Y_i$ such that 
        \begin{itemize}
            \item[$(1)$] $Y_i$ is $\{H_5, \text{co-}E\}$-free;  
            \item[$(2)$] $X_i$ can be partitioned into $(A^i_1,\ldots, A^i_{t_i})$ such that 
            \begin{itemize}
                \item[$(2.1)$] $(A^i_1,\ldots, A^i_{t_i})$ is a pure blockade; 
                \item[$(2.2)$] the pattern of $(A^i_1,\ldots, A^i_{t_i})$, whose vertex set consists of all blocks of $(A^i_1,\ldots, A^i_{t_i})$ and two vertices are adjacent if and only if their corresponding blocks are complete to each other, is $\{H_5, \text{co-}E\}$-free; 
                \item[$(2.3)$] for each $j\in [t_i]$ and every vertex $u\in \bigcup_{k\in [\ell]\setminus \{i\}} B_k$, $u$ is pure to $A^i_j$.
            \end{itemize}
        \end{itemize} 
    \end{lemma}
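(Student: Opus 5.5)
The plan is to build, inside each $B_i$, a hierarchical decomposition by connectivity and anti-connectivity, coarsened by the quotient operation $\mathcal{L}/\mathcal{M}$. The large blocks obtained this way will form $X_i$; whatever the decomposition cannot resolve will form $Y_i$. Throughout, the forbidden co-$E$ is produced from few vertices of $B_i$ together with the outside vertices $a_i$, $a_k$ ($k\neq i$), $v$ and a vertex $u\in B_k$. The available adjacencies are these: $a_i$ is complete to $B_i$ and anti-complete to $B_k$; $a_k$ is the reverse; $v$ is complete to both $B_i$ and $B_k$ and anti-complete to $\{a_i,a_k\}$.

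\textbf{Basic forbidden configuration.} First I would prove the following. Suppose $A_1,A_2\subseteq B_i$ are mixed and $x',y',u'$ are as in the conclusion of Lemma~\ref{clm:inductive}: $x',y'$ non-adjacent and complete to $A_1\cup A_2$, and $u'\in N(x')\setminus N(y')$ complete to $A_1$ and anti-complete to $A_2$. Then $G$ contains a co-$E$. To see this, pick a mixed pair $p\in A_1$, $q\in A_2$ (an edge or a non-edge, whichever the mixing supplies). Then check directly that $\{x',y',u',p,q\}$, together with one more vertex among $a_i,a_k,v$, induces co-$E$ in each of the two cases $pq\in E(G)$ and $pq\notin E(G)$.

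The initial instance of this configuration comes from the comb. Let $u\in B_k$ be complete to one sub-block of $B_i$ and anti-complete to another, mixed, sub-block. Then $x=v$, $y=a_i$ are non-adjacent and complete to the relevant part of $B_i$, but they do not fit the pattern exactly; the role of $u\in N(x)\setminus N(y)$ is played by $u$, since $uv\in E$ and $ua_i\notin E$. Hence the hypotheses of Lemma~\ref{clm:inductive} hold at the top level. Applying Lemma~\ref{clm:inductive} repeatedly pushes the configuration down the decomposition until it reaches two mixed blocks of the finest level, where the direct check gives the contradiction.

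\textbf{Building $X_i$ and verifying (2.1)--(2.3).} I would start from the partition of $B_i$ into its anti-connected components. Inside a component I would refine further into connected components of the complement side, alternating. At each level I replace the current blockade $\mathcal{L}$ by $\mathcal{L}/\mathcal{M}$.
\begin{itemize}
\item By Lemma~\ref{clm:property of levels}(1) the resulting blocks stay connected (respectively anti-connected).
\item By Lemma~\ref{clm:property of levels}(2) blocks in different classes are pure to each other, which gives (2.1).
\item For (2.3), if some $u\in B_k$ is mixed on a final block $A^i_j$, then Lemma~\ref{clm:property of levels}(3) yields two mixed sub-blocks with $u$ complete to one and anti-complete to the other. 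Lemma~\ref{clm:inductive} and the forbidden configuration above then produce a co-$E$, a contradiction.
\end{itemize}
For (2.2), an induced co-$E$ in the pattern lifts to a co-$E$ in $G$ by choosing one vertex per block, since the blocks are pairwise pure. An induced $H_5$ in the pattern should likewise yield a co-$E$ in $G$: for a block playing a pendant vertex $v_j'$, choose a vertex adjacent or non-adjacent appropriately and use an outside vertex ($v$ or $a_i$) as the extra vertex.

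\textbf{Defining $Y_i$ and the main obstacle.} I would let $Y_i$ be the vertices lying in pieces where the alternating decomposition becomes trivial, namely pieces that are simultaneously connected and anti-connected. These pieces must be shown $\{H_5,\text{co-}E\}$-free. Co-$E$-freeness is inherited from $G$. For $H_5$-freeness, I would show that an induced $H_5$ inside such a piece, together with $a_i$ or $v$ (complete to $B_i$) and the connectivity of the piece, forces an induced co-$E$. This is the step I expect to be hardest, since it needs a genuine case analysis on how the wheel $H_0$ and its pendants sit inside $B_i$. If the direct argument fails, my fallback is to enlarge $X_i$: keep refining via $\mathcal{M}$ until every remaining piece either has an $H_5$-free pattern or is itself forced $H_5$-free, using the fact that the hub $w$ of the wheel together with $v$ gives two vertices complete to $C_5$.
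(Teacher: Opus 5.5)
Your proposal has genuine gaps. The base case fails, your $Y_i$ is not $H_5$-free, and the key purity lemma the proof needs is missing.

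\textbf{The base case does not work.} Take $p\in A_1$ and $q\in A_2$. If $pq\in E(G)$, the complement of $G[\{x',y',u',p,q\}]$ is the path $x'y'u'q$ plus the isolated vertex $p$; if $pq\notin E(G)$, it is the path $x'y'u'qp$. Co-$E$ is the complement of the spider with legs of lengths $2,2,1$, so the sixth vertex is forced. In the first case it must be adjacent to $q$ but not to $p$. That is impossible for $a_i$ and $v$ (complete to $B_i$) and for $a_k$ (anticomplete to $B_i$). In the second case it must be adjacent to $x',y',p,q$ but not to $u'$. The only candidates adjacent to $p,q$ are $a_i$ and $v$, and at the top level these are $x'$ and $y'$ themselves.

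More fundamentally, the configuration is not contradictory by itself. If some $q\in A_2$ is adjacent to $p$ but not to $p'$ for an edge $pp'$ of $A_1$, then the complement of $G[\{x',y',u',p,p',q\}]$ is $P_5+K_1$, so no co-$E$ arises. A co-$E$ does arise when a vertex $b_1\in D_1$ is mixed on an \emph{edge} $b_2b_2'$ of $D_2$. That is how the paper verifies the third hypothesis of Lemma~\ref{clm:inductive} (no vertex of $D_1$ mixes on $D_2$), which you never check. This argument needs connected blocks at every level: for a non-edge $b_2b_2'$ the complement contains a triangle. So alternating connected and anti-connected levels breaks the recursion. When instead a vertex of $A_2$ mixes on $A_1$ at the bottom level, the contradiction must come from structure inside $A_1$.

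\textbf{Your $Y_i$ and your (2.2) argument fail.} Take a comb in which every $B_i$ induces $H_5$, the $B_i$ are pairwise anticomplete, the $a_i$ are pairwise non-adjacent, and $v$ is as required. Since co-$E$ is $2$-connected, an induced co-$E$ would lie in some $\{v,a_i\}\cup B_i$. The complement of that set is $\overline{H_5}$ plus a disjoint edge, and $E$ is connected, so the copy would lie inside $\overline{H_5}$. But $H_5$ is co-$E$-free: co-$E$ has minimum degree $2$, which rules out the pendants, and its degree sequence differs from that of the $5$-wheel. So this $G$ is co-$E$-free. Yet $H_5$ is connected and anti-connected, so your $Y_i=B_i$ contains $H_5$. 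Hence the step ``an $H_5$ together with $a_i$ or $v$ forces co-$E$'' is false. Likewise, an $H_5$ in the pattern lifts (one vertex per block) to an induced $H_5$ in $G$, not to a co-$E$.

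\textbf{The missing ideas.}
\begin{itemize}
\item[(i)] Let $x,y$ be non-adjacent and complete to an induced path $P$, and let $u\in N(x)\setminus N(y)$ be mixed on $P$. Then $u$ has neither two consecutive non-neighbours nor three consecutive neighbours on $P$. A case analysis on the wheel and its pendants upgrades this to: $u$ is pure on every induced $H_5$ complete to $x,y$.
\item[(ii)] Let $X_i$ be the vertices of $B_i$ lying in some induced $H_5$, so $Y_i$ is $H_5$-free by definition. Let $\mathcal{L}^1$ be the classes of the relation ``linked by a chain of overlapping induced $H_5$'s''; these blocks are connected. Then iterate $\mathcal{M}$. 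Now (2.2) holds because an $H_5$ spread over distinct blocks would lie in a single class.
\item[(iii)] For (2.3), recurse with the triples $(y,u,b_2)$ down to $\mathcal{L}^1$. There a vertex $u_2\in A_2$ mixed on $A_1$ contradicts (i) applied with $(x,y)=(y',u')$, since $A_1$ is a chain of overlapping $H_5$'s.
\end{itemize}
Your fallback does not replace (i): whatever $X_i$ is, the bottom level of (2.3) requires a purity statement of exactly this kind.
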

    \begin{proof}
        We begin with a useful claim. 
        \begin{claim}\label{clm:a vertex mixed on a path via E}
            Let $P$ be an induced path in $G$ and $x,y\in V(G)\setminus V(P)$ be two non-adjacent vertices complete to $P$. 
            For each vertex $u\in N(x)\setminus N(y)$ that is mixed on $P$, 
            \begin{itemize}
                \item[$(1)$] $u$ has no two consecutive non-neighbors in $P$; and 
                \item[$(2)$] $u$ has no three consecutive neighbors in $P$.  
            \end{itemize} 
        \end{claim}
        \begin{subproof}[Proof of Claim \ref{clm:a vertex mixed on a path via E}]
            Let $u\in N(x)\setminus N(y)$ be a vertex that is mixed on $P$. 
            Suppose first that $u$ has two consecutive non-neighbors in $P$. 
            So there is an induced subpath $a-b-c$ of $P$ such that $u$ is adjacent to $a$ but is not adjacent to $b,c$. 
            Then $\{x,y,u,a,b,c\}$ induces a co-$E$ (see Figure \ref{fig:co-E in claim 5.2.1}), a contradiction. 
            This proves Claim \ref{clm:a vertex mixed on a path via E} (1). 

            Now we suppose that $u$ has three consecutive neighbors in $P$. 
            So there is an induced subpath $a-b-c-d$ of $P$ such that $u$ is adjacent to $a,b,c$ but is not adjacent to $d$. 
            Then $\{y,u,a,b,c,d\}$ induces a co-$E$ (see Figure \ref{fig:co-E in claim 5.2.1}), a contradiction. 
            This proves Claim \ref{clm:a vertex mixed on a path via E} (2). 
        \end{subproof}
        \begin{figure}[h!]
        \centering
        \begin{tikzpicture}[scale=1]
        \tikzstyle{vertex}=[circle, draw, fill=white, inner sep=1pt, minimum size=5pt]
        
            \node[vertex](1) at (0,1){\scriptsize $a$};
            \node[vertex](2) at (0,0){\scriptsize $b$};
            \node[vertex](3) at (0,-1){\scriptsize $c$};
            
            \node[vertex](4) at (1,0){\scriptsize $x$};
            \node[vertex](5) at (-1,0){\scriptsize $y$};
            \node[vertex](6) at (1,1){\scriptsize $u$};

            \node at (0,-2.5) {Proof of Claim \ref{clm:a vertex mixed on a path via E} (1)};
            
            \foreach \from/\to in {1/2,2/3}
            \draw[blue] (\from) -- (\to);

            \foreach \from/\to in {4/1,4/2,4/3}
            \draw (\from) -- (\to);

            \foreach \from/\to in {5/1,5/2,5/3}
            \draw (\from) -- (\to);

            \foreach \from/\to in {6/1,6/4}
            \draw (\from) -- (\to);

        \end{tikzpicture}
        \hspace{10mm}
        \begin{tikzpicture}[scale=1]
        \tikzstyle{vertex}=[circle, draw, fill=white, inner sep=1pt, minimum size=5pt]
        
            \node[vertex](1) at (0,1.5){\scriptsize $a$};
            \node[vertex](2) at (0,0.5){\scriptsize $b$};
            \node[vertex](3) at (0,-0.5){\scriptsize $c$};
            \node[vertex](4) at (0,-1.5){\scriptsize $d$};

            \node[vertex](5) at (-1,0){\scriptsize $y$};
            \node[vertex](6) at (1,1){\scriptsize $u$};

            \node at (0,-2.5) {Proof of Claim \ref{clm:a vertex mixed on a path via E} (2)};
            
            \foreach \from/\to in {1/2,2/3,3/4}
            \draw[blue] (\from) -- (\to);

            \foreach \from/\to in {5/1,5/2,5/3,5/4}
            \draw (\from) -- (\to);

            \foreach \from/\to in {6/1,6/2,6/3}
            \draw (\from) -- (\to);

        \end{tikzpicture}
        \caption{Induced co-$E$s in the proof of Claim \ref{clm:a vertex mixed on a path via E} where the blue line is the subpath of $P$.}
        \label{fig:co-E in claim 5.2.1}
    \end{figure}
        By Claim \ref{clm:a vertex mixed on a path via E}, we obtain a ``homogeneous structure'' in $B_i$. 
        Formally, we have the following. 
        \begin{claim}\label{clm:homo structure for E}
            If $x,y$ are two non-adjacent vertices complete to an induced $H_5$,
            then every vertex $u\in N(x)\setminus N(y)$ is pure to this $H_5$. 
            In particular, for every vertex $u\in \bigcup_{k\in [\ell]\setminus \{i\}} B_k$, $u$ is pure to every induced $H_5$ in $B_i$. 
        \end{claim}
        
        \begin{subproof}[Proof of Claim \ref{clm:homo structure for E}]
            Suppose to the contrary that there is a vertex $u\in N(x)\setminus N(y)$ that is mixed on an induced $H_5$ labeled as in Figure \ref{fig:P7-star graphs}. 
            Let $C:=v_1-v_2-v_3-v_4-v_5-v_1$.  
            All indices below are modulo 5.
            
            Suppose first that $u$ is complete to $C$. 
            By Claim \ref{clm:a vertex mixed on a path via E} (2) with $P=v_j'-v_j-v_{j+1}-v_{j+2}$, we have $u$ is adjacent to $v_j'$ for each $j\in [5]$. 
            If $u$ is not adjacent to $w$, then $\{v_1',u,v_1,v_2,v_3,w\}$ induces a co-$E$, a contradiction. 
            So we may assume $uw\in E(G)$. 
            Then $u$ is complete to $H_5$, a contradiction. 
            So $u$ is not complete to $C$. 

            Now we suppose that $u$ is anti-complete to $C$. 
            By Claim \ref{clm:a vertex mixed on a path via E} (1) with $P=v_j'-v_j-v_{j+1}$, we have $u$ is not adjacent to $v_j'$ for each $j\in [5]$. 
            By Claim \ref{clm:a vertex mixed on a path via E} (1) with $P=v_1'-v_1-w$, we have $u$ is not adjacent to $w$. 
            It follows that $u$ is anti-complete to $H_5$, a contradiction. 
            So $u$ is not anti-complete to $C$. 

            Therefore, we may assume $u$ is mixed on $C$. 
            By Claim \ref{clm:a vertex mixed on a path via E} (1) and (2), there is an index $j\in [5]$ such that $N(u)\cap V(C)=\{v_j,v_{j-2},v_{j+2}\}$, say $\{v_1,v_{3},v_{4}\}$. 
            By Claim \ref{clm:a vertex mixed on a path via E} (1) with $P=v_4-w-v_2$, we have $u$ is adjacent to $w$. 
            By Claim \ref{clm:a vertex mixed on a path via E} (2) with $P=v_1'-v_1-w-v_3$, we have $u$ is adjacent to $v_1'$. 
            Then $\{u,w,v_1',v_1,v_2,v_3\}$ induces a co-$E$, a contradiction. This proves the first statement of the claim.
            
            The second statement of Claim \ref{clm:homo structure for E} follows immediately with $(x,y)$ replaced by $(v,a_i)$. 
            This completes the proof of Claim \ref{clm:homo structure for E}. 
        \end{subproof}
       
        Now we are ready to define $X_i,Y_i$ and $(A^i_1,\ldots, A^i_{t_i})$. 
        Let $X_i\subseteq B_i$ be the set of all vertices contained in some induced $H_5$ in $B_i$ and $Y_i=B_i\setminus X_i$. So $Y_i$ is $H_5$-free.
        Since $G$ is co-$E$-free, $Y_i$ is $\{H_5,\text{co-}E\}$-free.  
        This proves Lemma \ref{lem:coro-lem for E} (1). 
        For any two vertices $d,d'\in X_i$, we say $d,d'$ have {\em relation $\mathcal{R}$} if and only if there is a vertex sequence $d=d_1,d_2,\ldots,d_n=d'$ such that for each $k\in [n-1]$, $d_k$ and $d_{k+1}$ are contained in the same induced $H_5$ in $B_i$. 
        It is easy to check that $\mathcal{R}$ is an equivalence relation on $X_i$. 
        Let $\mathcal{L}^1$ be the blockade whose blocks are equivalence classes of $\mathcal{R}$. 
        Since $H_5$ is connected, each block of $\mathcal{L}^1$ is connected.
        For each $s\geq 2$, let $\mathcal{L}^{s}=\mathcal{L}^{s-1}/\mathcal{M}$. 
        Note that $\mathcal{L}^{s}$ is different from $\mathcal{L}^{s-1}$ if and only if there are two mixed blocks in $\mathcal{L}^{s-1}$.
        Since $X_i$ is finite, there is an integer $q$ such that $\mathcal{L}^{q}$ is a pure blockade. 
        Set $(A^i_1,\ldots, A^i_{t_i}):=\mathcal{L}^{q}$ and  
        this proves Lemma \ref{lem:coro-lem for E} (2.1). 
        If there is an induced $H_5$ such that each vertex of this $H_5$ is contained in a different block of $(A^i_1,\ldots,A^i_{t_i})$, then these vertices should have been in the same block of $\mathcal{L}^1$, a contradiction. Since $G$ is co-$E$-free, the patter graph is $\{H_5,\text{co-}E\}$-free.
        This proves Lemma \ref{lem:coro-lem for E} (2.2).
        It remains to prove Lemma \ref{lem:coro-lem for E} (2.3).
      
        \begin{claim}\label{clm:homo blocks for E}
            For every $s\in[q]$, every block $L$ of $\mathcal{L}^s$ and every vertex $u\in \bigcup_{k\in [\ell]\setminus \{i\}} B_k$, $u$ is pure to $L$.
        \end{claim}
        \begin{subproof}[Proof of Claim \ref{clm:homo blocks for E}]
            We prove this claim by induction on $s$. 
            If $s=1$, then this claim follows immediately from Claim \ref{clm:homo structure for E} with $(x,y,u)=(v,a_i,u)$. 
            So we may assume that $s\ge 2$ and this claim holds for $s-1$.

            Suppose to the contrary that there is a vertex $u\in \bigcup_{k\in [\ell]\setminus \{i\}} B_k$ that is mixed on $L$.  
            By the inductive hypothesis, $u$ is pure to each block of $\mathcal{L}^{s-1}$. 
            By Lemma \ref{clm:property of levels} (3), there are two mixed blocks of $\mathcal{L}^{s-1}$ contained in $L$ such that $u$ is complete to one of these two blocks but is anti-complete to the other one. 
            Suppose that for each fixed $i\in [s]$, there are two mixed blocks $D_1,D_2$ of $\mathcal{L}^i$ and three vertices $x,y,z$ with $z\in N(x)\setminus N(y)$ such that $x,y$ are complete to $D_1\cup D_2$ and $z$ is complete to $D_1$ and anti-complete to $D_2$. If $b_1\in D_1$ mixes on $D_2$, then $b_1$ mixed on an edge $b_2b'_2\in D_2$ by connectivity. 
            By Claim \ref{clm:a vertex mixed on a path via E} (1) with $(P,x,y,z)=(b_1-b_2-b_2',x,y,z)$, we obtain a contradiction since $z$ is mixed on $b_1-b_2-b_2'$ and has two consecutive non-neighbors $b_2,b_2'$.

            By applying Lemma \ref{clm:inductive} repeatedly, beginning with $(x,y,u)=(v,a_i,u)$, where each application of Lemma \ref{clm:inductive} preserves the same hypotheses until reaching $\mathcal{L}^1$, there are two mixed blocks $A_1,A_2$ of $\mathcal{L}^1$ and three vertices $x',y',u'\notin A_1\cup A_2$ such that 
            \begin{itemize}
                \item $x',y'$ are two non-adjacent vertices that are complete to $A_1\cup A_2$; 
                \item $u'\in N(x')\setminus N(y')$ is complete to $A_1$ but is anti-complete to $A_2$.  
            \end{itemize}  
            So we may assume that there is a vertex $u_2\in A_2$ mixed on $A_1$. 
            By Claim \ref{clm:homo structure for E} with $(x,y,u)=(y',u',u_2)$, $u_2$ is pure to every induced $H_5$ of $A_1$. 
            Since $A_1$ is an equivalence class of $\mathcal{R}$, $u_2$ is pure to $A_1$, a contradiction.  
            
            This complete the proof of Claim \ref{clm:homo blocks for E}. 
        \end{subproof} 
        
        By Claim \ref{clm:homo blocks for E} with $s=q$, Lemma \ref{lem:coro-lem for E} (2.3) holds.
        This completes the proof of Lemma \ref{lem:coro-lem for E}. 
    \end{proof}

    \subsection{Bird}

    In this subsection, we prove that Bird-free graph satisfies the hypothesis of  Lemma \ref{lem:property of co-E}.

    \begin{lemma}\label{lem:coro-lem}
        Let $G$ be a co-Bird-free graph with a $(\ell,w)$-comb $((a_i,B_i),i\in[\ell])$ in $G$ and a vertex $v\in V(G)\setminus(\{a_i:i\in[\ell]\}\cup\bigcup_{i\in[\ell]}B_i)$ such that $v$ is complete to $\bigcup_{i\in[\ell]}B_i$ and anti-complete to $\{a_i:i\in[\ell]\}$. 
        For each $i\in[\ell]$, $B_i$ can be partitioned into $X_i,Y_i$ such that 
        \begin{itemize}
            \item[$(1)$] $Y_i$ is $E$-graph-free;  
            \item[$(2)$] $X_i$ can be partitioned into $(A^i_1,\ldots, A^i_{t_i})$ such that 
            \begin{itemize}
                \item[$(2.1)$] $(A^i_1,\ldots, A^i_{t_i})$ is a pure blockade; 
                \item[$(2.2)$] the pattern of $(A^i_1,\ldots, A^i_{t_i})$, whose vertex set consists of all blocks of $(A^i_1,\ldots, A^i_{t_i})$ and two vertices are adjacent if and only if their corresponding blocks are complete to each other, is $E$-graph-free; 
                \item[$(2.3)$] for each $j\in [t_i]$ and every vertex $u\in \bigcup_{k\in [\ell]\setminus \{i\}} B_k$, $u$ is pure to $A^i_j$.
            \end{itemize}
        \end{itemize} 
    \end{lemma}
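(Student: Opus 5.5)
The plan is to follow the proof of Lemma~\ref{lem:coro-lem for E} closely, with the $E$-graph taking over the role of $H_5$. Take $\mathcal{F}_1=\mathcal{F}_2=\{E\text{-graph}\}$; these have the Erd\H{o}s-Hajnal property by Theorem~\ref{thm:E}. For each $i$, let $X_i$ be the set of vertices of $B_i$ lying in some induced $E$-graph inside $B_i$, and let $Y_i=B_i\setminus X_i$. Then (1) holds by construction. On $X_i$, call $d\sim d'$ if they are joined by a chain of vertices in which consecutive members lie in a common induced $E$-graph in $B_i$. Let $\mathcal{L}^1$ be the blockade of equivalence classes. Each block is connected, since the $E$-graph is connected. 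Then iterate quotients $\mathcal{L}^s=\mathcal{L}^{s-1}/\mathcal{M}$ until the blockade becomes pure, and call the result $(A^i_1,\dots,A^i_{t_i})$. This gives (2.1), and by Lemma~\ref{clm:property of levels}(1) every block stays connected.

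For (2.2), suppose the pattern graph contained an induced $E$-graph. Choosing one vertex from each of its blocks gives an induced $E$-graph in $B_i$ whose vertices lie in distinct blocks of $\mathcal{L}^1$, which contradicts the definition of $\mathcal{R}$. Here we use purity of the final blockade, so any choice of representatives realises the pattern.

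The core is a local claim analogous to Claim~\ref{clm:a vertex mixed on a path via E}. Let $x,y$ be nonadjacent, both complete to an induced path $P$, and let $u\in N(x)\setminus N(y)$ be mixed on $P$. Co-Bird-freeness should then forbid certain short patterns of $N(u)\cap V(P)$ on consecutive vertices of $P$. To find them, I would pass to the complement. There $xy$ and $uy$ are edges, $ux$ is a nonedge, and $x,y$ are anticomplete to $P$. In the complement I would look for a Bird: a triangle with pendant paths, built from $x,y$ and a few consecutive vertices of $P$. One candidate is the triangle $\{y,u,p\}$ for a suitable $p$, with pendants supplied by $x$ and a non-neighbour of $u$ on $P$. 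From such Birds, read off which configurations $u$ cannot have on a path of length $2$ or $3$.

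Using these forbidden patterns, prove the analogue of Claim~\ref{clm:homo structure for E}: every $u\in N(x)\setminus N(y)$ is pure to every induced $E$-graph $Q$ that is complete to $x,y$. Argue by cases on $u$'s adjacency to the central vertex of $Q$ (the degree-3 vertex) and to its neighbours. Propagate along the three arms, each of which is an induced path through the centre. This forces $u$ to be complete or anticomplete to $Q$, or else produces a co-Bird directly. Taking $(x,y)=(v,a_i)$ then gives purity of each $u\in B_k$, $k\neq i$, to every induced $E$-graph in $B_i$, and hence to every block of $\mathcal{L}^1$.

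For (2.3), induct on $s$ exactly as in Claim~\ref{clm:homo blocks for E}. Suppose $u$ is mixed on a block of $\mathcal{L}^s$. Lemma~\ref{clm:property of levels}(3) yields two mixed blocks at level $s-1$ that $u$ distinguishes. The local claim must also rule out a vertex of $D_1$ mixing on an edge of $D_2$, which is the third hypothesis of Lemma~\ref{clm:inductive}. Then repeated applications of Lemma~\ref{clm:inductive} push the configuration down to two mixed blocks of $\mathcal{L}^1$ together with a new triple $(x',y',u')$. This contradicts purity at level $1$.

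I expect the main obstacle to be the local claim itself. One has to find the right forbidden patterns, and in particular show that a vertex mixed on an edge with two common nonadjacent neighbours yields a co-Bird, as needed for the hypothesis of Lemma~\ref{clm:inductive}. The case analysis for the $E$-graph must also close off every mixed pattern. I would first enumerate, by hand in the complement, all placements of $\{x,y,u\}$ together with $3$ vertices of $P$ that form a Bird.
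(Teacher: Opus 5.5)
There is a genuine gap, and it sits in the step you identify as the crux. Your overall architecture is the paper's: $X_i$ and $Y_i$, the relation $\mathcal{R}$, iterated quotients, the argument for (2.2), and the descent through Lemma~\ref{clm:inductive}. The problem is that you keep the blocks connected, and plan to verify the third hypothesis of Lemma~\ref{clm:inductive} by showing that a vertex $b_1\in D_1$ mixed on an \emph{edge} $b_2b_2'$ of $D_2$ gives a co-Bird. This is false. Take $b_1b_2\in E(G)$ and $b_1b_2'\notin E(G)$, the non-adjacent pair $x,y$ complete to $D_1\cup D_2$, and $z\in N(x)\setminus N(y)$ complete to $D_1$ and anticomplete to $D_2$. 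All adjacencies among these six vertices are then forced. The complement of $G[\{x,y,z,b_1,b_2,b_2']$ is the tree with centre $z$ and arms $z-y-x$, $z-b_2'-b_1$, $z-b_2$, i.e.\ an $E$-graph. So the configuration is a co-$E$, which is exactly why connectivity sufficed for $H_5$ in the co-$E$-free case. But its complement is triangle-free, so it is not a co-Bird, and nothing rules it out in a co-Bird-free graph. More generally, in the complement $x$ is adjacent only to $y$, and $y$ only to $x$ and $u$, so neither lies in a triangle. Hence the triangle of any Bird lies in $S\cup\{u\}$ and contains two vertices of $S$ that are non-adjacent in $G$. In particular your candidate triangle $\{y,u,p\}$ never occurs, since $yp\in E(G)$.

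The missing idea is anti-connectivity. The $E$-graph is a tree that is not a star, so its complement is connected. Hence the blocks of $\mathcal{L}^1$ are anti-connected, and by Lemma~\ref{clm:property of levels}(1) so are the blocks of all later quotients. A vertex $b_1\in D_1$ mixed on $D_2$ is then mixed on a \emph{non-edge} $b_2b_2'$ of $D_2$. Now $\{b_1,b_2\}\cup\{b_2'\}$ induces $P_2+P_1$, with $z$ mixed on the $P_2$ and anticomplete to the $P_1$. So $\{x,y,z,b_1,b_2,b_2'\}$ is a co-Bird: in the complement, $\{z,b_2,b_2'\}$ is a triangle with pendant path $z-y-x$ and pendant edge $b_2'b_1$.

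This $P_2+P_1$ pattern, together with the $P_3+P_1$ pattern of Claim~\ref{clm:a vertex mixed on a path via bird}(2), is also what the paper uses to show that $u$ is pure on every induced $E$-graph $Q$. Your plan to read off patterns from consecutive vertices of a single induced path will not suffice there. Write $Q$ as the path $v_1-v_2-v_3-v_4-v_5$ plus a leaf $v_3'$ at $v_3$, and suppose $u$ is complete to $v_1,\dots,v_5$ but non-adjacent to $v_3'$. Then the only co-Birds inside $\{x,y,u\}\cup V(Q)$ are $\{y,u\}$ together with $\{v_3',v_3,v_4\}+\{v_1\}$ or with $\{v_3',v_3,v_2\}+\{v_5\}$. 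Neither of these vertex sets lies on an induced path of $Q$.
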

    \begin{proof}
        We begin with a useful claim. 
        \begin{claim}\label{clm:a vertex mixed on a path via bird}
            Let $S$ be an induced subgraph of $G$ and $x,y\in V(G)\setminus V(S)$ be two non-adjacent vertices complete to $S$. 
            For every induced subgraph $H$ of $S$, there is no $u\in N(x)\setminus N(y)$ for which either of the following holds. 
            \begin{itemize}
                \item[$(1)$] $H$ is $P_2+P_1$ and $u$ is mixed on the $P_2$ but is anti-complete to the $P_1$.  
                \item[$(2)$] $H$ is $P_3+P_1$ and $u$ is adjacent to the $P_1$ and two consecutive vertices of the $P_3$ but is anti-complete to the remaining vertex of the $P_3$.  
            \end{itemize} 
        \end{claim}
        
        \begin{subproof}[Proof of Claim \ref{clm:a vertex mixed on a path via bird}]
            Let $u\in N(x)\setminus N(y)$ be a vertex. 
            Suppose first that $H$ is $P_2+P_1$ with edge $ab$ and the isolated vertex $c$. 
            If $u$ is adjacent to $a$ but is non-adjacent to $b$ and $c$, then $\{x,y,u,a,b,c\}$ induces a co-Bird (see Figure \ref{fig:co-Bird in claim 5.2.1}), a contradiction. 
            This proves Claim \ref{clm:a vertex mixed on a path via bird} (1). 

            Now we suppose that $H$ is $P_3+P_1$ with induced path $a-b-c$ and the isolated vertex $d$. 
            If $u$ is adjacent to $b,c,d$ but is non-adjacent to $a$, then $\{y,u,a,b,c,d\}$ induces a co-Bird (see Figure \ref{fig:co-Bird in claim 5.2.1}), a contradiction. 
            This proves Claim \ref{clm:a vertex mixed on a path via bird} (2). 
        \end{subproof}
        
        \begin{figure}[h!]
        \centering
        \begin{tikzpicture}[scale=1]
        \tikzstyle{vertex}=[circle, draw, fill=white, inner sep=1pt, minimum size=5pt]
        
            \node[vertex](1) at (0,1){\scriptsize $a$};
            \node[vertex](2) at (0,0){\scriptsize $b$};
            \node[vertex](3) at (0,-1){\scriptsize $c$};
            
            \node[vertex](4) at (1,0){\scriptsize $x$};
            \node[vertex](5) at (-1,0){\scriptsize $y$};
            \node[vertex](6) at (1,1){\scriptsize $u$};

            \node at (0,-2.5) {Proof of Claim \ref{clm:a vertex mixed on a path via bird} (1)};
            
            \foreach \from/\to in {1/2}
            \draw (\from) -- (\to);

            \foreach \from/\to in {4/1,4/2,4/3}
            \draw (\from) -- (\to);

            \foreach \from/\to in {5/1,5/2,5/3}
            \draw (\from) -- (\to);

            \foreach \from/\to in {6/1,6/4}
            \draw (\from) -- (\to);

        \end{tikzpicture}
        \hspace{10mm}
        \begin{tikzpicture}[scale=1]
        \tikzstyle{vertex}=[circle, draw, fill=white, inner sep=1pt, minimum size=5pt]
        
            \node[vertex](1) at (0,1.5){\scriptsize $a$};
            \node[vertex](2) at (0,0.5){\scriptsize $b$};
            \node[vertex](3) at (0,-0.5){\scriptsize $c$};
            \node[vertex](4) at (0,-1.5){\scriptsize $d$};
            
            \node[vertex](5) at (-1,0){\scriptsize $y$};
            \node[vertex](6) at (1,1){\scriptsize $u$};

            \node at (0,-2.5) {Proof of Claim \ref{clm:a vertex mixed on a path via bird} (2)};
            
            \foreach \from/\to in {1/2,2/3}
            \draw (\from) -- (\to);

            \foreach \from/\to in {5/1,5/2,5/3,5/4}
            \draw (\from) -- (\to);

            \foreach \from/\to in {6/3,6/2,6/4}
            \draw (\from) -- (\to);

        \end{tikzpicture}
        \caption{Induced co-Birds in the proof of Claim \ref{clm:a vertex mixed on a path via bird}.}
        \label{fig:co-Bird in claim 5.2.1}
    \end{figure}
        
        \begin{claim}\label{clm:homo structure}
            If $x,y$ are two non-adjacent vertices complete to an induced $E$-graph,
            then every vertex $u\in N(x)\setminus N(y)$ is pure to this $E$-graph. 
            In particular, for every vertex $u\in \bigcup_{k\in [\ell]\setminus \{i\}} B_k$, $u$ is pure to every induced $E$-graph in $B_i$. 
        \end{claim}
        \begin{subproof}[Proof of Claim \ref{clm:homo structure}]
            Suppose to the contrary that there is a vertex $u\in N(x)\setminus N(y)$ that is mixed on an induced $E$-graph. 
            Let $P=v_1-v_2-v_3-v_4-v_5$ be the induced $P_5$ contained in this $E$-graph and $v_3'$ be the vertex adjacent to $v_3$. 
            
            Suppose first that $u$ is complete to $P$.  
            By Claim \ref{clm:a vertex mixed on a path via bird} (2) with $H$ induced by $\{v_3',v_3,v_4,v_1\}$, we have $u$ is adjacent to $v_3'$.   
            Then $u$ is complete to this $E$-graph, a contradiction. 
            So $u$ is not complete to $P$. 
            Now we suppose that $u$ is anti-complete to $P$. 
            By Claim \ref{clm:a vertex mixed on a path via bird} (1) with $H$ induced by $\{v_1,v_3,v_3'\}$, we have $u$ is not adjacent to $v_3'$. 
            It follows that $u$ is anti-complete to this $E$-graph, a contradiction. 
            So $u$ is not anti-complete to $P$. 

            Therefore, we may assume $u$ is mixed on $P$.
            Suppose first that $u$ is mixed on $v_1v_2$. 
            By Claim \ref{clm:a vertex mixed on a path via bird} (1), $u$ is adjacent to each of $v_3',v_4,v_5$.
            If $u$ is adjacent to $v_1$ but is not adjacent to $v_2$, then $u$ is adjacent to $v_3$
            by Claim \ref{clm:a vertex mixed on a path via bird} (2) with $H$ induced by $\{v_1,v_3,v_4,v_5\}$.
            Then the $\{v_2,v_3,v'_3,v_5\}$ contradicts 
            Claim \ref{clm:a vertex mixed on a path via bird} (2).
            So we may assume that $u$ is adjacent to $v_2$ but is not adjacent to $v_1$. 
            By Claim \ref{clm:a vertex mixed on a path via bird} (1) with $H$ induced by $\{v_1,v_3,v_3'\}$, $u$ is adjacent to $v_3$. Then $\{v_1,v_2,v_3,v_5\}$ contradicts Claim \ref{clm:a vertex mixed on a path via bird} (2).
            This proves that $u$ is pure to $v_1v_2$.
            By symmetry, $u$ is pure to $v_4v_5$. 

            If $u$ is adjacent to $v_1,v_2,v_4,v_5$, then $u$ is non-adjacent to $v_3$ and then $\{v_1,v_2,v_3,v_5\}$ contradicts Claim \ref{clm:a vertex mixed on a path via bird} (2).
            If $u$ is non-adjacent to $v_1,v_2,v_4,v_5$, then $u$ is adjacent to $v_3$ and so $\{v_2,v_3,v_5\}$ contradicts Claim \ref{clm:a vertex mixed on a path via bird} (1).
            So we may assume that $u$ is adjacent to $v_1,v_2$ but is non-adjacent to $v_4,v_5$. 
            By Claim \ref{clm:a vertex mixed on a path via bird} (1) with $H$ induced by $\{v_2,v_3,v_5\}$, $u$ is adjacent to $v_3$. 
            By Claim \ref{clm:a vertex mixed on a path via bird} (2) with $H$ induced by $\{v_1,v_3',v_3,v_4\}$, $u$ is not adjacent to $v_3'$, which gives a contradiction by Claim \ref{clm:a vertex mixed on a path via bird} (1) with $H$ induced by $\{v_3,v_3',v_5\}$. 
            This proves the first statement of the claim.
            
            The second statement of Claim \ref{clm:homo structure} follows immediately with $(x,y)$ replaced by $(v,a_i)$. 
            This completes the proof of Claim \ref{clm:homo structure}. 
        \end{subproof}
        
        Now we are ready to define $X_i,Y_i$ and $(A^i_1,\ldots, A^i_{t_i})$. 
        Let $X_i\subseteq B_i$ be the set of all vertices contained in some induced $E$-graph in $B_i$ and $Y_i=B_i\setminus X_i$. So $Y_i$ is $E$-graph-free.   
        This proves Lemma \ref{lem:coro-lem} (1). 
        For any two vertices $d,d'\in X_i$, we say $d,d'$ have {\em relation $\mathcal{R}$} if and only if there is a vertex sequence $d=d_1,d_2,\ldots,d_n=d'$ such that for each $k\in [n-1]$, $d_k$ and $d_{k+1}$ are contained in the same induced $E$-graph in $B_i$. 
        It is easy to check that $\mathcal{R}$ is an equivalence relation on $X_i$. 
        Let $\mathcal{L}^1$ be the blockade whose blocks are equivalence classes of $\mathcal{R}$. 
        Since $E$-graph is anti-connected, each block of $\mathcal{L}^1$ is anti-connected. 
        For each $s\geq 2$, let $\mathcal{L}^{s}=\mathcal{L}^{s-1}/\mathcal{M}$. 
        Note that $\mathcal{L}^{s}$ is different from $\mathcal{L}^{s-1}$ if and only if there are two mixed blocks in $\mathcal{L}^{s-1}$.
        Since $X_i$ is finite, there is an integer $q$ such that $\mathcal{L}^{q}$ is a pure blockade. 
        Set $(A^i_1,\ldots, A^i_{t_i}):=\mathcal{L}^{q}$ and  
        this proves Lemma \ref{lem:coro-lem} (2.1). 
        If there is an induced $E$-graph such that each vertex of this $E$-graph is contained in a different block of $(A^i_1,\ldots,A^i_{t_i})$, then these vertices should have been in the same block of $\mathcal{L}^1$, a contradiction. 
        This proves Lemma \ref{lem:coro-lem} (2.2).
        It remains to prove Lemma \ref{lem:coro-lem} (2.3).
      
        \begin{claim}\label{clm:homo blocks}
            For every $s\in[q]$, every block $L$ of $\mathcal{L}^s$ and every vertex $u\in \bigcup_{k\in [\ell]\setminus \{i\}} B_k$, $u$ is pure to $L$.
        \end{claim}
        \begin{subproof}[Proof of Claim \ref{clm:homo blocks}]
            We prove this claim by induction on $s$. 
            If $s=1$, then this claim follows immediately from Claim \ref{clm:homo structure} with $(x,y,u)=(v,a_i,u)$. 
            So we may assume that $s\ge 2$ and this claim holds for $s-1$.

            Suppose to the contrary that there is a vertex $u\in \bigcup_{k\in [\ell]\setminus \{i\}} B_k$ that is mixed on $L$.  
            By the inductive hypothesis, $u$ is pure to each block of $\mathcal{L}^{s-1}$. 
            By Lemma \ref{clm:property of levels} (3), there are two mixed blocks of $\mathcal{L}^{s-1}$ contained in $L$ such that $u$ is complete to one of these two blocks but is anti-complete to the other one. 
            Suppose that for each fixed $i\in [s]$, there are two mixed blocks $D_1,D_2$ of $\mathcal{L}^i$ and three vertices $x,y,z$ with $z\in N(x)\setminus N(y)$ such that $x,y$ are complete to $D_1\cup D_2$ and $z$ is complete to $D_1$ and anti-complete to $D_2$. If $b_1\in D_1$ mixes on $D_2$, then $b_1$ mixed on a non-edge $b_2b'_2\in D_2$ by anti-connectivity. 
            By Claim \ref{clm:a vertex mixed on a path via bird} (1) with $(H,x,y,z)=(\{b_1b_2\}+b_2',x,y,z)$, we obtain a contradiction.

            By applying Lemma \ref{clm:inductive} repeatedly, beginning with $(x,y,u)=(v,a_i,u)$, where each application of Lemma \ref{clm:inductive} preserves the same hypotheses until reaching $\mathcal{L}^1$, there are two mixed blocks $A_1,A_2$ of $\mathcal{L}^1$ and three vertices $x',y',u'\notin A_1\cup A_2$ such that 
            \begin{itemize}
                \item $x',y'$ are two non-adjacent vertices that are complete to $A_1\cup A_2$, and
                \item $u'\in N(x')\setminus N(y')$ is complete to $A_1$ but is anti-complete to $A_2$.  
            \end{itemize}
            
            So we may assume that there is a vertex $u_2\in A_2$ mixed on $A_1$. 
            By Claim \ref{clm:homo structure} with $(x,y,u)=(y',u',u_2)$, $u_2$ is pure to every induced $E$-graph of $A_1$. 
            Since $A_1$ is an equivalence class of $\mathcal{R}$, $u_2$ is pure to $A_1$, a contradiction.  
            
            This complete the proof of Claim \ref{clm:homo blocks}. 
        \end{subproof} 

        By Claim \ref{clm:homo blocks} with $s=q$, Lemma \ref{lem:coro-lem} (2.3) holds.
        This completes the proof of Lemma \ref{lem:coro-lem}. 
    \end{proof}
    
\section{Concluding remarks}

    In this paper, we combine several new ideas to extend the iterative sparsification framework recently developed by Nguyen, Scott and Seymour \cite{NSS23}. Our framework not only yields the first two six-vertex graphs satisfying the Erdős-Hajnal conjecture which do not follow directly from Theorem \ref{thm:E-H-substitute} and Corollary \ref{coro:degree 1 and degree n-2},
    but also provides a general criterion (Lemma \ref{lem:proving (*)}) that may be used to verify the conjecture for further graph classes. Moreover, we recover the result of Chudnovsky and Safra \cite{CS08} on bull graph and the result of Nguyen, Scott and Seymour \cite{NSS23} on $P_5$ in a unified way.  
    This suggests that the comb-based approach is a promising direction for future work on the Erdős-Hajnal conjecture.

\section*{Acknowledgments}
We thank Viet Hoang and his coauthors for pointing out a gap in an earlier version of our proof of Lemma \ref{lem:get a blockade by iteration}, and for their helpful comments.

\end{document}